\definecolor{review1}{RGB}{80,80,179}
\definecolor{review2}{RGB}{217,95,2}
\definecolor{review3}{RGB}{27,158,119}
\definecolor{allreviews}{RGB}{231,41,138}
\definecolor{todiscuss}{RGB}{127,150,127}
\newcommand{\ra}{\textcolor{black}}
\newcommand{\rb}{\textcolor{black}}
\newcommand{\rall}{\textcolor{black}}
\pgfplotsset{compat=newest}
\titleformat{\paragraph}[runin]{\normalfont\bfseries}{\theparagraph}{1em}{}
\normalfont\fontsize{14}{17}\sffamily\bfseries}
\normalfont\fontsize{11}{15}\sffamily\bfseries\slshape}
\newcommand{\Vone}{{\boldsymbol{e}}}
\newcommand{\mycomment}[1]{}
\newcommand{\Id}{\boldsymbol{I}_\textrm{d}}
\newcommand{\trans}{{\textrm{\rm t}}}
\newcommand{\ba}{{\boldsymbol{a}}}
\newcommand{\bb}{{\boldsymbol{b}}}
\newcommand{\be}{{\boldsymbol{e}}}
\newcommand{\balpha}{{\boldsymbol{\alpha}}}
\newcommand{\bbeta}{{\boldsymbol{\beta}}}
\newcommand{\bchi}{{\boldsymbol{\chi}}}
\newcommand{\bvert}{{\big\vert}}
\newcommand{\bVert}{{\big\Vert}}
\def\real{\mathop{\rm Re}\nolimits}
\newcommand{\smallBB}{{\hskip 0.5em $\scriptstyle\blacksquare$}}
\newcommand{\bint}[1]{{\,\in\llbracket{#1}\rrbracket}}
\newcommand{\bbint}[1]{{\llbracket{#1}\rrbracket}}
\newcommand{\ddf}[2]{{\texttt{df}^{(#1)}\big[{#2}\big] }}
\newcommand{\PE}[1]{{{\texttt{PE}\raisebox{-0.3em}{\texttt{\,\scriptsize {#1}}}}}}
\newcommand{\SEE}[3]{{{\texttt{SE}\raisebox{-0.3em}{\texttt{\,\scriptsize {#1}\,{#2}}}}\raisebox{+0.5em}{\texttt{\scriptsize{\hskip-1.1em  {#3} }}}\hskip 0.2em}}
\newcommand{\SKfoot}[2]{{\texttt{Skm[{\scriptsize #1,#2}]}}}
\newcommand{\SK}[2]{{\texttt{Skm[{\small #1,#2}]}}}
\newcommand{\blackline}{
{\raisebox{1.5pt}{\tikz{\draw[black,line width = 0.5pt](0.,0.8mm) -- (5mm,0.8mm)}}}}
\newcommand{\blueline}{
{\raisebox{1.5pt}{\tikz{\draw[blue,line width = 0.5pt](0.,0.8mm) -- (5mm,0.8mm)}}}}
\newcommand{\redline}{
{\raisebox{1.5pt}{\tikz{\draw[red,line width = 0.5pt](0.,0.8mm) -- (5mm,0.8mm)}}}}
\newcommand{\greenline}{
{\raisebox{1.5pt}{\tikz{\draw[green,line width = 0.5pt](0.,0.8mm) -- (5mm,0.8mm)}}}}
\newcommand{\resizeboxlarger}[1]{
\resizebox{\ifdim\width>\textwidth\textwidth\else\width\fi}{!}{#1}}
\newfont{\notapolice}{cmss11}
\newfont{\captionpolice}{cmss9}
\newfont{\tablepolice}{cmtt10}
\newfont{\smalltablepolice}{cmtt8}
\newfont{\defpolice}{cmssi11}
\theoremstyle{plain}
\newtheorem{theorem}{Theorem}[section]
\newtheorem{lemma}[theorem]{Lemma}
\newtheorem{prop}[theorem]{Proposition}
\newtheorem{cor}[theorem]{Corollary}
\theoremstyle{myDefinition}
\newtheorem{defn}{Definition}[section]
\theoremstyle{myRemark}
\newtheorem*{rem}{Remark}
\newtheorem{example}{Example}
\def\fixedlabel#1#2{%
\@bsphack%
\protected@write\@auxout{}%
{\string\newlabel{#1}{{#2}{\thepage}}}%
\@esphack}
\pgfplotsset{compat=newest}
\definecolor{oiBlue}{HTML}{0072B2}
\definecolor{oiOrange}{HTML}{E69F00}
\definecolor{oiGreen}{HTML}{009E73}
\definecolor{oiRed}{HTML}{D55E00}
\definecolor{oiPurple}{HTML}{CC79A7}
\definecolor{oiSky}{HTML}{56B4E9}
\definecolor{oiYellow}{HTML}{F0E442}
\definecolor{oiBlack}{HTML}{000000}
\def\emdash{\text{\normalfont ---}}
\newcommand{\specialcell}[2]{\begin{tabular}[t]{@{}#1@{}}#2\end{tabular}}
\title{The structural method for ordinary differential equations:\\ toward extreme-orders of accuracy}
\author{
    \href{https://orcid.org/0000-0003-2295-5118}{\includegraphics[scale=0.06]{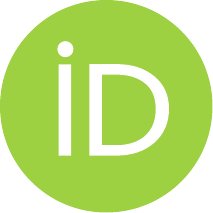}\hspace{1mm}S. Clain} \\
	Centre of Mathematics, Coimbra University, Largo D. Dinis, 3000-143 Coimbra, Portugal \\
	\texttt{clain@mat.uc.pt} \\
	\And
	\href{https://orcid.org/0000-}{\includegraphics[scale=0.06]{orcid.pdf}\hspace{1mm}M.T. Malheiro} \\
	Centre of Mathematics, Campus de Gualtar, 4710-057 Braga, Portugal \\
	\texttt{mtm@math.uminho.pt} \\
	\And
	\href{https://orcid.org/0000-}{\includegraphics[scale=0.06]{orcid.pdf}\hspace{1mm}G.J. Machado} \\
	Centre of Mathematics, Campus de Gualtar, 4710-057 Braga, Portugal \\
	\texttt{gjm@math.uminho.pt} \\
    \And
	\href{https://orcid.org/0000-}{\includegraphics[scale=0.06]{orcid.pdf}\hspace{1mm}R. Costa} \\
	Institute for Polymers and Composites, Campus de Azur\'em, University of Minho,\\ 4080-058 Guimar\~aes, Portugal \\
	\texttt{rcosta@dep.uminho.pt}\\
    }
\begin{document}
\maketitle
\begin{abstract}
We design and analyse a new numerical method to solve ODE system based on the structural method. We compute approximations of solutions together with its derivatives up to order $K$ by solving an entire block corresponding to $R$ time steps. We build the physical relations that connect the function and derivative approximations at each time step by using the ODE and its derivatives and develop the structural equations that establish linear relations between the function and its derivative over the whole block of $R$ times steps. The non-linear system is solved and provide very accurate approximations with nice spectral resolution properties.
\end{abstract}
\keywords{structural method, very high-order, extreme-order, Compact scheme, EDO, stability, multi-derivatives}

\section{Introduction}
First-order ordinary differential equations (ODEs), or systems of the form $\phi'(t)=f(\phi(t),t)$, with $t\mapsto\phi(t)\in\mathbb{R}^I$, arise in most non-stationary models to characterise the evolution of processes over time. The design of accurate and stable numerical schemes for solving these ODEs is a fundamental challenge for improving computational efficiency in critical settings, such as stiff problems or chaotic differential equations.

In the early 1970s, researchers began using the second derivative as an additional variable to improve solution accuracy and reduce the number of substeps in numerical schemes. This approach is rooted in the methodology of compact schemes, which exploit more local information to minimise stencil size~\cite{Pettigrew1989,Lele1992}. Early studies on multi-derivative methods include works by Urabe~\cite{U70}, Shintani~\cite{Hi71,Hi72}, Kastlunger, Hairer and Wanner~\cite{KW72a,KW72b,HW73}, and Cash~\cite{C78} within the Runge--Kutta (RK) framework. In the multistep setting, early contributors include Makinson~\cite{M68}, Reimer~\cite{R68}, Liniger and Willoughby~\cite{LW70}, Enright~\cite{E74a,E74b}, Genin~\cite{G74}, Makela et al.~\cite{MNS74}, Jeltsch~\cite{J76a,J76b,J77a,J77b,J79}, Brown~\cite{B77}, Gupta~\cite{G78}, and Cash~\cite{C81a,C81b}. Lambert's book~\cite{L73} discusses both approaches. References~\cite{GW86,GJ00}, together with Butcher's textbook~\cite{Butcher2008}, highlight Obrechkoff's contribution~\cite{O40}, with particular emphasis on the two-derivative case, as further explored in~\cite{BH05,CT10,TCW14}.

Following a productive decade in multi-derivative ODE integration, research activity declined between the 1980s and the 2010s, with only a limited number of publications. For RK methods, the main difficulty lies in the complex and highly nonlinear order conditions~\cite{GW86,G88,GJ00,ChanChan2004}, which complicate the construction of fourth- or higher-order schemes with more than two or three substeps. Several papers address this issue by imposing additional conditions to reduce complexity. These include explicit schemes~\cite{OY04,AOS05}, methods with few substeps~\cite{BT97}, and two-derivative cases~\cite{II99,BH05,HA06,CT10}. \\

\ra{Alternative approaches have also been proposed to achieve very high-order approximations, in which the algebraic order conditions are replaced by a few simple linear constraints. The collocation method~\cite{GS69,W70} provides approximations of arbitrary order and has been identified as a particular case of a class of implicit RK methods~\cite{CAP11}. Given the collocation points in a time interval, the algebraic order conditions become linear conditions, as expressed by relations (2.7) and (2.8) in~\cite{CAP11}. The method is not compact in the sense considered here, since it involves implicit linear combinations of the function and its derivatives at the collocation points. Nevertheless, we would like to mention the recent works of~\cite{FH20} and~\cite{LHM26}, where the authors explore a novel approach by introducing second-order derivatives to reduce the stencil size.
\\
The spectral deferred correction method is another class of high-order schemes based on successive approximations of the solution over a time interval $[t_n,t_{n+1}]$, as shown in~\cite{DGR00,M03}; see~\cite{OS20} for an extensive review of the SDC method. This method also avoids the introduction of complicated nonlinear algebraic conditions~\cite{COQ09}. A $p$-subdivision $t_{n,m}$, $m=0,\ldots,p$, together with local approximations $\phi_{n,m}$, is introduced to design an iterative scheme that provides a sequence $\phi_n^{[k]}$ of approximations of order $k+1$. The upgrade operator $\phi_n^{[k]}\to \phi_n^{[k+1]}$, defined on the interval $[t_n,t_{n+1}]$, makes it possible to increase the order of accuracy using very simple numerical tools, such as first-order Euler schemes and quadrature formulae for integration. Very recently, a multi-derivative version was proposed by introducing the second-order derivative~\cite{ZSS22}.
}\\

\rb{
The structural method is a general technique that provides very high-order approximations by introducing derivatives up to degree $K$ at each node $t_n$. Moreover, it clearly separates the physical equations, which represent the underlying mathematical model, from the structural equations, which are independent of the problem under consideration and depend only on the grid/mesh structure. The combination of these two sets of equations then yields a global (non-)linear system that provides highly accurate approximations of the function and its derivatives~\cite{CPPL23}. }

\rb{This method can be regarded as a generalisation of compact schemes in the sense of Lele~\cite{Lele1992}, applied to the ODE setting, where implicit relations between the function and its derivatives are introduced over a prescribed stencil. In our case, the stencil is referred to as an $R$-block and consists of $R$ successive time steps. As an example, the combined compact difference scheme for elliptic equations in space proposed in~\cite{Chu1998,Chu1999,Chu2000} can be reformulated as a structural method for ODEs in time; see~\cite{Clain2023}. It is worth noting that the same structural equations that yield a sixth-order accurate scheme in time for ODEs coincide with the Chu and Fan relations within the compact finite difference framework~\cite{Chu1998}.}\\

\rb{
The structural method proposed in the present work corresponds to a particular case in which, by taking advantage of all the physical relations, as explained in Sections 2 and 3, the scheme is reinterpreted as a multi-derivative RK (MDRK) scheme. The key point is that MDRK schemes are difficult to construct because of the large number of nonlinear algebraic order conditions involved. In summary, handling the order conditions for $K$-derivative methods of order $p$ is challenging. For instance, reference~\cite{GG26} lists 33 strongly nonlinear order conditions for a two-derivative RK method with $s$ substeps to provide a sixth-order scheme ($p=6$; see also~\cite{KW72a,HMS94,NBK09}). These conditions involve $2s(s+1)$ coefficients, arising from combinations of first and second derivatives at the different substeps. For example, incorporating third- or higher-order derivatives with $s$ substeps in order to achieve very high accuracy appears to be intractable, potentially involving thousands of nonlinear relations to be satisfied~\cite{KW72a,GW86,F12,MF21,QJY23}.
}

\rb{
The main advantage of structural schemes is that there is no need to verify nonlinear algebraic order conditions in order to achieve the prescribed order. Instead, the coefficients of the method are obtained from vectors belonging to the kernel of a matrix, leading to a highly efficient computation of the structural equations. Recently, the MDRK literature has focused on techniques to alleviate this major computational drawback. Explicit formulae for three-derivative and higher-derivative RK schemes have been proposed in~\cite{EH12,WA13}, while the development of new multistep schemes has been studied in~\cite{MF21,MA22}, with some improvements in accuracy reported in~\cite{NB09,AH11,OI15}. Applications include time discretisation for non-stationary partial differential equations (PDEs)~\cite{SG14,TC14,SS17,JS24} and the construction of strong stability preserving (SSP) schemes of RK type~\cite{CG16,MA20,GG22,AA22,QJ23} or multistage type~\cite{MF19,GG19,MA23} for conservation laws. A recent review~\cite{GG26} discusses these developments. However, all these methods address only second- or third-derivative schemes with a small number of substeps, owing to the large number of nonlinear algebraic order conditions involved, whereas the structural method efficiently produces $K$-derivative schemes over an $R$-block at any order $p$ by solving a simple linear problem.}\\

This paper introduces a generalisation of the structural method for ODE systems using $K$ derivatives, addressing key challenges in existing RK methods. The main advantages are the following:
\begin{itemize}
\item The principal contributions are as follows: The method corresponds to a $K$-derivative implicit RK method when all possible physical equations are used.
\item The intermediate steps are not auxiliary variables, but genuine approximations that enjoy the optimal order of accuracy of the method.
\item Unconditional stability is proved together with a formal assessment of the consistency and {\it a priori} errors.
\item The coefficients of the structural equations are computed from the kernel of a matrix, avoiding an intractable non-linear system to solve.
\item The method covers all possible meta-parameters $K$ (number of derivatives), $R$ (number of intermediate steps), and $p$ (order of the method) using the same algorithm.
\item Excellent spectral resolution is obtained when $K$ increases. In particular, only two or three grid points are necessary to perfectly recover a full revolution; hence, the method is highly effective at capturing high-frequency relative to the grid characteristic size.
\end{itemize}
\rall{The document is divided into two parts. The first is devoted to the design of the method, its implementation, comparisons with existing methods, and numerical simulations. The second part addresses the theoretical aspects of convergence, stability, and spectral resolution, together with a detailed study and justification of our implementation for solving the nonlinear global system. }

% --- Part I: Method design and benchmarks
\vskip 2em
\centerline{\includegraphics[width=0.30\textwidth]{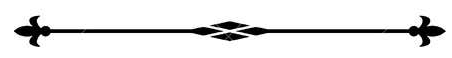}}
\centerline{\Large \sl Part I: Method design and benchmarks}
\vskip 0.5em

{\sl \small The first part focuses on the general construction of the structural equations and on the coupling between the structural and physical equations. In Section~2, we detail the technical aspects of the method, leading to different types of ODE solvers, while comparisons with classical high-order methods for ODEs are provided in Section~3. Section~4 is devoted to the presentation of a simple algorithm for solving the nonlinear systems, whereas Sections~5 and 6 present numerical evidence concerning the convergence and efficiency of the structural method for ODEs. Annexes A and B include analytical formulae for the implicit structural equations on a uniform grid, together with explicit structural equations for post-processing. We note that a numerical study for the specific case of Hamiltonian systems was carried out in~\cite{CFM25}; however, our goal here is to generalize the approach to a generic ODE system.}<.

\section{General design of the structural method}

Consider the generic first-order scalar ODE problem
\begin{equation}
\label{eq::ODE}
\phi'(t)=f(\phi(t),t),\,t\in[0,T],
\end{equation}
together with the initial condition $\phi(0)=\phi_0$, with $T>0$ the final time and $f\equiv f(z,t)$ a regular function in $\mathbb{R}\times[0,T]$.

Let $N\in\mathbb{N}$, and define a uniform grid $t_n = n\Delta t$ for $n\bint{0,N}$, such that $T = N\Delta t$. Let $\phi^{(k)}\equiv\phi^{(k)}(t)$ denote the $k$th derivative of the solution, with $\phi^{(0)}(t)\equiv\phi(t)$ as in Eq.~\eqref{eq::ODE}. The approximations $\phi^{(k)}_n$ represent $\phi^{(k)}(t_n)$ at each time grid point.

The structural method divides the problem into two distinct subsystems. The first subsystem contains the physical equations, which are derived directly from the generic ODE and its time derivatives. In contrast, the second subsystem comprises structural equations, defined by linear relations that depend only on the time grid structure. Notably, these structural equations do not involve the specific ODE being solved, emphasising their generality.

\subsection{Physical equations}

The Eq.~\eqref{eq::ODE}, which relates the first derivative $\phi^{(1)}(t)$ with function $\phi^{(0)}(t)$ and time $t$, reads
\begin{equation}\label{eq::ODE:K1}
\phi^{(1)}(t)=f\left(\phi^{(0)}(t),t\right)
\equiv\ddf{0}{\phi^{(0)},t}.
\end{equation}
The time derivative of Eq.~\eqref{eq::ODE:K1} provides an additional physical equation, that reads
\begin{equation}\label{eq::ODE_K2}
\phi^{(2)}(t)=\partial_z f\left(\phi^{(0)}(t),t\right)\phi^{(1)}(t)+\partial_t f\left(\phi^{(0)}(t),t\right)
\equiv\ddf{1}{\phi^{(0)},\phi^{(1)},t}
\end{equation}
where $\partial_z f$ and $\partial_t f$ denote the derivatives with respect to the first and second arguments, respectively. Once again, the time derivative of Eq.~\eqref{eq::ODE_K2} provides a relation between the function $\phi(t)$ and all its derivatives up to the third order, that is,
\begin{multline}\label{eq::ODE_K3}
\phi^{(3)}(t)
=\partial_z f\left(\phi^{(0)}(t),t\right)\phi^{(2)}(t)+\partial_{zz} f\left(\phi^{(0)}(t),t\right)\left[\phi^{(1)}(t)\right]^2\\
+ 2\partial_{zt} f\left(\phi^{(0)}(t),t\right)\phi^{(1)}(t)+\partial_{tt} f\left(\phi^{(0)}(t),t\right)
\equiv\ddf{2}{\phi^{(0)},\phi^{(1)},\phi^{(2)},t}.
\end{multline}
By systematically applying the chain rule $K-1$ times, we obtain a sequence of $K$ physical relations. These can be represented in a generic form as
\[
\phi^{(k+1)}(t)=\frac{\textrm{d}}{\textrm{d}t}\ddf{k-1}{\phi^{(0)},\phi^{(1)},\ldots,\phi^{(k-1)},t}
\equiv\ddf{k}{\phi^{(0)},\phi^{(1)},\ldots,\phi^{(k)},t}
,\quad k\bint{0,K-1}.
\]

\begin{rem}
Note that, by construction, the application
\[
(z_0,z_1,\ldots,z_k,t)\to\ddf{k}{z_0,z_1,\ldots,z_k,t}
\]
is a polynomial function with respect to $z_1,\ldots,z_k$ with non-linear coefficients solely depending on $z_0$ and $t$ corresponding to the partial derivatives in $z$ and time.
\hfill\smallBB
\end{rem}

We can now introduce the notion of physical equation, which just consists of substituting the exact values with the approximations $\phi^{(k)}_n(t)$ in the previous equations. Therefore, the first physical equation, associated with time $t_n$ and denoted as $\PE{1}(n)$, derives from the relation~\eqref{eq::ODE:K1} and reads
\[
\phi^{(1)}_n=f(\phi^{(0)}_n,t_n)
\equiv\ddf{0}{\phi_n^{(0)},t_n}.
\]
Similarly, the second physical equation $\PE{2}(n)$ is deduced from relation~\eqref{eq::ODE_K2}, that is
\[
\phi^{(2)}_n=\partial_z f(\phi^{(0)}_n,t_n)\phi^{(1)}_n+\partial_t f(\phi^{(0)}_n,t_n)\equiv\ddf{1}{\phi_n^{(0)},\phi_n^{(1)},t_n},
\]
and the third physical equation $\PE{3}(n)$ from relation~\eqref{eq::ODE_K3}, that is
\begin{multline*}
\phi^{(3)}_n
=\partial_z f\left(\phi^{(0)}_n,t_n\right)\phi^{(2)}_n+\partial_{zz} f\left(\phi^{(0)}_n,t_n\right)\left[\phi^{(1)}_n\right]^2
+ 2\partial_{zt} f\left(\phi^{(0)}_n,t_n\right)\phi^{(1)}_n+\partial_{tt} f\left(\phi^{(0)}_n,t_n\right)\\
\equiv\ddf{2}{\phi_n^{(0)},\phi_n^{(1)},\phi_n^{(2)},t_n}.
\end{multline*}
The physical equations associated with any derivative of order four or more are obtained in a similar way.

\begin{rem}
Each physical equation $\PE{k}(n)$ only involves numerical approximations of the solution and its derivatives at the same time grid point, $t_n$. However, to connect the values of the solution and its derivatives at different nearby time points, we introduce a second set of equations. These are called structural equations and will be defined next.
\hfill\smallBB
\end{rem}
%\begin{defn}
%We denote by $\PE{k}(n)$ the $k$ physical equation evaluated at node $t_n$. Note that $k=1$ corresponds to the original physical equation, while $k>1$ comes from the $k-1$ derivatives' expression applying the chain rule.
%\end{defn}

\subsection{Structural Equations}

Given $K$ as the maximum derivative order, we construct $K$ physical equations, denoted $\PE{1}$ through $\PE{K}$. Each equation involves the numerical values $\phi^{(k)}_n$, $k\bint{0,K}$, at each time $t_n$. Once these values at $t_n$ are available, the next task is to compute the solution and its derivatives over a group of $R$ consecutive time steps, called a block. This means assembling a $(K+1)\times R$ matrix containing the values of the solution and its derivatives at each time point within this block, that is
\[
\Phi_n=\left(\phi^{(k)}_{n+r}\right)_{k,r},\quad k\bint{0,K},\,r\bint{1,R},
\]
as depicted in Fig.~\ref{fig:grid_KR}.

By evaluating the $K$ physical equations at each of the $R$ time points $t_{n+r}$, $r\bint{1,R}$, we obtain $R\times K$ relations. To uniquely determine all variables, we need $R$ additional equations, which are drawn from the structure of the uniform time grid used in the discretisation.

\begin{figure}
\centering
\includegraphics{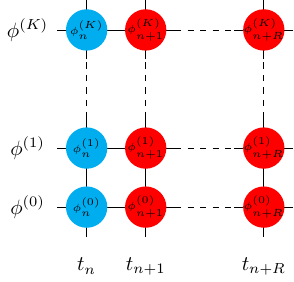}
\caption{Grid of points for a structural scheme. The given values at time $t_{n}$ are represented is blue, while the unknown values that constitute the block are marked in red.}
\label{fig:grid_KR}
\end{figure}

\subsubsection{General design}

Structural equations are linear relations involving the approximations $\phi^{(k)}_{n+r}$, where $k$ denotes the derivative order for $k\bint{0,K}$ and $r$ the time step for $r\bint{0,R}$. More specifically, structural equation takes the form
\[
\sum_{k=0}^K\sum_{r=0}^R a_{k,r}\,\Delta t^k\,\phi^{(k)}_{n+r}=0,
\]
where $\ba=\left(a_{k,r}\right)_{k,r}$, $k\bint{0,K}$, $r\bint{0,R}$, is the $(K+1)\times(R+1)$ matrix of coefficients, and $\Delta t^k\equiv(\Delta t)^k$ denotes powers of the time increment.

%\begin{rem}
%\textcolor{red}{
%Denoting
%\[
%\widetilde\Phi_n=\left(\phi^{(k)}_{n+r}\right)_{k,r},\quad k\bint{0,K},\,r\bint{0,R},
%\]
%the extension of matrix $\Phi_n$ by including the initial stage $n$.
%\hfill\smallBB
%}
%\end{rem}

In order to determine the coefficients $a_{k,r}$ of the structural equation, we introduce the functional
\[
E(\ba;\pi)=\sum_{k=0}^K\sum_{r=0}^R a_{k,r}\,\Delta t^k\,\pi^{(k)}(t_{n+r})
\]
for any function $\pi=\pi(t)$, regular enough.

We represent the matrix $\ba$ as a vector by mapping pairs $(k,r)$ to a single index $(k,r)\to\ell=1+k(R+1)+r$, from $\llbracket 0,K\rrbracket\times\llbracket 0,R\rrbracket$ onto $\llbracket 1,M\rrbracket$, $M=(K+1)(R+1)$. For simplicity, we still denote this vector as $\ba$. Using this mapping, the above functional can be rewritten as
\[
E(\ba;\pi)=\sum_{k=0}^K\sum_{r=0}^R a_{\ell}\,\Delta t^k\,\pi^{(k)}(t_{n+r}),\quad\ell=\ell(k,r).
\]

To construct high-order accurate schemes, we require the functional to hold exactly for all polynomials up to a certain degree. For this purpose, consider the following polynomial functions
\[
\pi_m(t;t_n)=\left(\frac{t-t_n}{\Delta t}\right)^{m-1},\quad m\bint{1,M}.
\]
Setting $E(\ba;\pi_m)=0$ for all $m\bint{1,M}$, a $M\times M$ linear system $\mathcal{M}\ba=0$ is obtained with entries
\[
\mathcal{M}[m,\ell]=\pi^{(k)}_m(t_{n+r};t_n)=C^m_k r^{m-1-k},\quad\ell=\ell(k,r),
\]
with
\[
C^m_k=
\begin{cases}
\frac{(m-1)!}{(m-1-k)!} & \text{if $m > k$},\\
0 & \text{otherwise},
\end{cases}
\]
for $k\bint{0,K}$ and $r\bint{0,R}$. Note that the matrix $\mathcal{M}$ does not depend on the mesh parameter $\Delta t$ or on the initial time $t_n$. This independence results from the use of a uniform mesh and the definition of the functions $\pi_m(\cdot;t_n)$.

\begin{rem}
Illustrating for $K=2$ and generic $R$, the matrix $\mathcal{M}$ is given as
\begin{scriptsize}
\[
{\mathcal{M}}=
\left(
\begin{array}{cccc|cccc|cccc}
1 & 1 & \cdots & 1 & 0 & 0 & \cdots & 0 & 0 & 0 & \cdots & 0\\
0 & 1 & \cdots & R^{1} & 1 & 1 & \cdots & 1 & 0 & 0 & \cdots &0\\
0 & 1 & \cdots & R^2 & 0 & 2 & \cdots & 2\times R^{1} & 2 & 2 &\cdots& 2\\
0 & 1 & \cdots & R^3 & 0 & 3 & \cdots & 3\times R^2 & 0 & 3\times2 &\cdots &3\times 2\times R^{1}\\
\vdots & \vdots & \ddots & \vdots & \vdots & \vdots & \ddots & \vdots & \vdots & \vdots & \ddots & \vdots\\
0 & 1 & \cdots & R^{M-1} & 0 & (M-1) & \cdots & (M-1)\times R^{M-2}& 0 & (M-1)(M-2) & \cdots & (M-1)(M-2)R^{M-3}\\
\end{array}
\right).
\]
\end{scriptsize}
\hfill\smallBB
\end{rem}

We now analyse the properties of the linear system $\mathcal{M}\ba=0$.

\begin{defn}
A set of functions $\left(\nu_m(t)\right)_{m=1,\ldots,M}$ is resolvent of $E$ if, for any $m\bint{1,M}$, we have that, if $E(\ba;\nu_{m})=0$, then $\ba=0$.
\end{defn}

\begin{prop}
The set of $M$ polynomial functions of degree less than or equal to $M-1$
\[
\tau_{k,r}(t):=\tau_{\ell(k,r)}(t)=
\left(\frac{t-t_{n+r}}{\Delta t}\right)^k
\prod_{\substack{s\bint{0,R}\\s\neq r}}
\left(\frac{t-t_{n+s}}{\Delta t}\right)^{K+1},\quad k\bint{0,K},\,r\bint{0,R},
\]
is resolvent.
\end{prop}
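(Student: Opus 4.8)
The plan is to show that the zero matrix is the only $\ba$ annihilating every $\tau_{k,r}$, by exploiting the fact that these polynomials are engineered so that the grid-evaluations of their derivatives form a triangular, column-decoupled system, which I can then invert by back-substitution.

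First I would eliminate the time step by passing to the rescaled variable $x=(t-t_n)/\Delta t$, under which the node $t_{n+r}$ corresponds to $x=r$ and $\tfrac{d}{dt}=\tfrac{1}{\Delta t}\tfrac{d}{dx}$. Writing $P_{k,r}(x)=(x-r)^{k}\prod_{s\neq r}(x-s)^{K+1}$ for the polynomial representation of $\tau_{k,r}$, the chain rule yields the clean identity $\Delta t^{k}\,\tau_{k',r'}^{(k)}(t_{n+r})=P_{k',r'}^{(k)}(r)$, so that $E(\ba;\tau_{k',r'})=\sum_{k=0}^{K}\sum_{r=0}^{R}a_{k,r}\,P_{k',r'}^{(k)}(r)$ — an expression free of $\Delta t$, as one should expect given the earlier remark that $\mathcal M$ is grid-independent.

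Next I would read off the orders of vanishing of $P_{k',r'}$ on the grid: it has a zero of order $K+1$ at every node $x=s$ with $s\neq r'$, and a zero of order exactly $k'$ at $x=r'$. Since every derivative order that occurs satisfies $k\leq K<K+1$, all evaluations $P_{k',r'}^{(k)}(r)$ vanish whenever $r\neq r'$, and at $r=r'$ they vanish whenever $k<k'$. Consequently $E(\ba;\tau_{k',r'})=\sum_{k=k'}^{K}a_{k,r'}\,P_{k',r'}^{(k)}(r')$: the equations decouple column by column in $r'$, and within each fixed column they are triangular in the index $k'$.

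The conclusion then follows by back-substitution. A Leibniz-rule computation gives the diagonal terms $P_{k',r'}^{(k')}(r')=k'!\prod_{s\neq r'}(r'-s)^{K+1}$, which are nonzero since the nodes are distinct. Thus, fixing $r'$ and running $k'$ from $K$ down to $0$, each equation forces the corresponding $a_{k',r'}$ to vanish once the higher ones are known to vanish; repeating over all $r'\bint{0,R}$ yields $\ba=0$, which is precisely the resolvent property. I expect the only delicate point to be the careful bookkeeping of the orders of vanishing together with the verification that the diagonal term is nonzero; once the rescaling to $P_{k,r}$ is in place, the triangular structure is transparent and the rest is immediate.
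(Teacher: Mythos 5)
Your proof is correct and follows essentially the same route as the paper's: both exploit the order-$(K+1)$ vanishing of $\tau_{k,r}$ at the nodes $s\neq r$ to decouple the equations column by column, the order-$k$ vanishing at the node $r$ itself to obtain a triangular system in the derivative index, and the nonzero diagonal entries proportional to $\prod_{s\neq r}(r-s)^{K+1}$ to conclude by downward induction from $k=K$ to $k=0$. The rescaling to $x=(t-t_n)/\Delta t$ is only a cosmetic cleanup of the $\Delta t$ bookkeeping, and your explicit Leibniz evaluation $P_{k',r'}^{(k')}(r')=k'!\prod_{s\neq r'}(r'-s)^{K+1}$ is in fact slightly more careful than the paper's, which harmlessly omits the factorial factor.
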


\begin{proof}
First, observe that the degree of all these polynomial functions is strictly less than $M$. Second, these polynomials are linearly independent and therefore form a basis for $\mathbb{P}_{M-1}$.
Note that for any $k'\leqslant K$ and $r'\neq r$, we have $\tau_{k,r}^{(k')}(t_{n+r'})=0$, then
\begin{align*}
E(\ba;\tau_{k,r})
&=\sum_{k'=0}^K\sum_{r'=0}^R a_{k',r'}\,\Delta t^{k'}\,\tau_{k,r}^{(k')}(t_{n+r'})
=\sum_{k'=0}^K a_{k',r}\,\Delta t^{k'}\,\tau_{k,r}^{(k')}(t_{n+r})\\
&=\sum_{k'=k}^K a_{k',r}\,\Delta t^{k'}\,\tau_{k,r}^{(k')}(t_{n+r})=\sum_{k'=k}^K a_{k',r}\,\Delta t^{k'-k}\,b_{k'-k,r},
\end{align*}
where for $k'\geqslant k$ we have
\[
b_{k'-k,r}=\left[\prod_{s\neq r}\left(\frac{t-t_{n+s}}{\Delta t}\right)^{K+1}\right]^{(k'-k)}_{\boldsymbol{\big |t=t_{n+r}}}.
\]
Notice that for $k=k'$, we obtain
\[
b_{0,r}=\left[\prod_{s\neq r}\left(\frac{t-t_{n+s}}{\Delta t}\right)^{K+1}\right]_{\boldsymbol{\big |t=t_{n+r}}}
=\prod_{s\neq r}\left(r-s\right)^{K+1}\neq 0.
\]
The rest of the proof is achieved by induction. Let $r\bint{0,R}$. For $k=K$, we just have
\[
0=E(\ba;\tau_{K,r})=a_{K,r} b_{0,r};
\]
hence $a_{K,r}=0$. Now, take $k=K-1$, then we have
\[
0=E(\ba;\tau_{{K-1},r})=a_{K,r}\,\Delta t\,b_{1,r}+a_{K-1,r} b_{0,r}=a_{K-1,r} b_{0,r}.
\]
Therefore $a_{K-1,r}=0$. Similarly, we successively prove that $a_{K-2,r}=0$, $a_{K-3,r}=0$, $\ldots$, $a_{0,r}=0$. In conclusion, all the coefficients of $\ba$ are null, so the set of polynomial functions is resolvent.
\end{proof}

\begin{cor}
The matrix $\mathcal{M}$ is non-singular.
\end{cor}

\begin{proof}
The sets of polynomial functions $\tau_{k,r}$ and $\pi_m$ both define a basis for $\mathbb{P}_{M-1}$. Since the set $\tau_{k,r}$ is resolvent, the set of functions $\pi_m$ is also resolvent. Thus, the matrix $\mathcal{M}$ is non-singular.
\end{proof}

\begin{defn}
Let $S$ be the number of structural equations we intend to produce. We denote by $\mathcal{K}^{S}\subset\mathbb{R}^M$ the kernel of the rectangular $(M-S)\times M$ matrix $\mathcal{M}[1:M-S,:]$, with the last $S$ rows eliminated.
\end{defn}

Since $\mathcal{M}$ is non-singular, the matrix $\mathcal{M}[1:M-S,:]$ enjoys the maximum rank property, therefore the dimension of $\mathcal{K}^{S}$ is $S$ and contains a basis of $S$ linear independent vectors of $\mathbb{R}^M$ we shall denote $\ba^s$, $s\bint{1,S}$.
%The notation of the structural equations is a bit complex, since several parameters are conditioning the expression of $\ba^s$.
\begin{defn}
Given $K$ the number of derivatives, $R$ the size of the block, and $S$ the total number of structural equations, we denote by $\SEE{K}{R}{S}(s)$, $s\bint{1,S}$, the structural equation number $s$ corresponding to the vector $\ba^s\in\mathcal{K}^{S}$.
\end{defn}
\begin{defn}
We denote by $\boldsymbol{A}$ the $S\times M$ matrix composed of the line vectors $(\ba^s)^\trans$, $s\bint{1,S}$.
\end{defn}

\subsubsection{Choice of the structural equations}

The first observation is that, although the kernel $\mathcal{K}^{S}$ is unique as a vectorial subspace, we do not have the uniqueness of the basis, that is, the uniqueness of the set of $S$ structural equations. We list several methods to produce the $S$ basis vectors.

\paragraph{I. Analytical method.}
Analytical expressions of the coefficients of the system $E(\ba;\pi_{m})=0$, $m=\bint{1,M-S}$, can be derived by symbolic computation assuming a uniform time subdivision. Such relations avoid the computation of matrix $\mathcal{M}$ and are mandatory for stability analysis. In~\ref{analytical_expression} we report several sets of structural equations $\SEE{K}{R}{S}(s)$ for useful values of $K$ and $R$.

\begin{rem}
The analytical expressions of vectors $(\ba^s)_{s=1}^S$ given in~\ref{analytical_expression} do not necessarily correspond to an orthogonal basis of $\mathcal{K}^{S}$, that is, we do not necessarily have $\langle\ba^s,\ba^m\rangle=\delta_{sm}$.
\hfill\smallBB
\end{rem}

\paragraph{II. Numerical method.} For larger values of $K$ and $R$ or when dealing with a non-uniform grid, we compute an orthogonal basis of the kernel $\mathcal{K}^{S}$ using the complete SVD decomposition
\[
\mathcal{M}[1:M-S,1:M]=U\Sigma V^\trans
\]
with $\boldsymbol{U}$ a rectangular $(M-S)\times M$ matrix of orthogonal line vectors, $\boldsymbol{\Sigma}$ the diagonal $M\times M$ matrix, and $\boldsymbol{V}^\trans$ the square orthogonal $M\times M$ matrix. Since the $S$ last diagonal entries of matrix $\boldsymbol{\Sigma}$ are null, we deduce that the $S$ last column vectors of matrix $\boldsymbol{V}$ define an orthogonal basis of $\mathcal{K}^S$.

\paragraph{III. Hierarchical basis.} We want the first vector $\ba^1$ to provide the largest order of consistency error. The second vector $\ba^2$ reduces the consistency order of one and so forth. To provide such a property, we propose an algorithm to set a numerical hierarchical basis of the kernel $\mathcal{K}^{S}$.

\begin{enumerate}
\item Firstly compute the sole vector $\ba^1$ of the kernel $\mathcal{K}^{1}$.
\item Then, determine a basis of the kernel $\mathcal{K}^{2}$. Noting that $\ba^1$ automatically belongs to $\mathcal{K}^{2}$, we determine a normalised vector $\ba^2\in\mathcal{K}^{2}$ orthogonal to $\mathcal{K}^{1}$ using an orthogonalisation procedure. Moreover, the consistency error associated with $\ba^2$ is one order lower than $\ba^1$ as the number of constraints $E(\ba,\pi_m)=0$ has been reduced.
\item For a given $S'<S$, assume that the set of vectors $\displaystyle\left(\ba^s\right)_{s=1}^{s=S'}\in\mathcal{K}^{S'}$ is orthogonal. Then, determine the kernel $\mathcal{K}^{S'+1}$ and extract a new orthogonal vector $\ba^{S'+1}\in\mathcal{K}^{S'+1}$ orthogonal to $\mathcal{K}^{S'}$. By construction, $\ba^{S'+1}$ provides a reduced consistency of one order with respect to $\ba^{S'}$.
\item The construction ends when $S'=S$ is reached and $\{\ba^1,\ldots,\ba^S\}$ is a hierarchical basis for $\mathcal{K}^{S}$.
\end{enumerate}

\subsubsection{Equivalent set of structural equations}
\label{sec.EquStrEq}

As we pointed out in the previous section, one can determine different representations/bases of the kernel $\mathcal{K}^{S}$, that is, different sets of structural equations. This question is important because of the impact of the basis choice. Indeed, we can consider two different bases $\big(\ba^s)_{s=1}^{S}$ and $\big(\widetilde\ba^s)_{s=1}^{S}$ of $\mathcal{K}^{S}$ from which we build two different sets of structural equations. Consequently, there exists a change of basis given by a non-singular $S\times S$ matrix $\boldsymbol{\Xi}=[\xi_{ij}]_{S\times S}$ such as $\widetilde{\boldsymbol{A}}=\boldsymbol{\Xi}\boldsymbol{A}$, where
\begin{equation}\label{eq_basis}
\widetilde{\boldsymbol{A}}=\left[\big(\widetilde{\ba}^s\big)^\trans\right]
\quad\text{and}\quad
\boldsymbol{A}=\left[\big(\ba^s\big)^\trans\right]
\end{equation}
are two $S\times M$ matrices.
Conversely, any non-singular matrix $\boldsymbol{\Xi}$ transforms a basis of $\mathcal{K}^{S}$ into another basis of $\mathcal{K}^{S}$.
%\textcolor{red}{As we shall prove in the sequel, the choice of the basis has no impact on the stability or condition number of the whole system (combining physical and structural equations).}

\begin{rem}
By construction, any set of structural equations provides the same global consistency error order $M-S$ since the equations are exact for any $\pi_m$, $m\bint{1,M-S}$.
\hfill\smallBB
\end{rem}

\subsection{Design of the schemes}
\label{design_schemes}

To build a complete numerical scheme to solve a first-order ODE, the set of physical and structural equations to be used must be determined. Several (meta)parameters control the scheme's complexity and accuracy, particularly the number of physical equations per grid point ($K$) and the number of grid points per block ($R$).

\subsubsection{Notations}

Given $K$ and $R$, the data are arranged into two vectors.
\begin{itemize}
\item The approximations regarding the same derivation order $k$ are gathered into vectors
\[
\Phi_n^{(k)}=\Big(\phi^{(k)}_{n+1},\ldots,\phi^{(k)}_{n+R}\Big)^\trans\in\mathbb{R}^R
\]
while $T_n=(t_{n+1},\ldots,t_{n+R})^\trans$ stands for the time-step vector.
\item The approximations for the same time step $t_{n+r}$ are gathered into vectors
\[
\Psi_{n+r}=\big(\phi^{(0)}_{n+r},\ldots,\phi^{(K)}_{n+r}\Big)^\trans\in\mathbb{R}^{K+1}.
\]
\end{itemize}
The generic formulation then is, given the initial vector $\Psi_{n}$ at time $t_n$, we seek the matrix $\displaystyle\Phi_{n}=\left(\phi^{(k)}_{n+r}\right)_{k,r}=\big [\Psi_{n+1},\ldots,\Psi_{n+R}\big ]\in\mathbb{R}^{(K+1)\times R}$, solution of the following system:
\begin{itemize}
\item $K\times R$ physical equations $\PE{k}(r)$, $k\bint{1,K}$, $r\bint{1,R}$, and
\item $R$ structural equations $\SEE{K}{R}{R}(s)$, $s\bint{1,R}$.
\end{itemize}

\begin{rem}
Note that, since we want to consider the maximum number of physical equations, we need $R$ structural equations.
\hfill\smallBB
\end{rem}

\begin{defn}
Functions $\ddf{k-1}{z_0,z_1,\ldots,z_{k-1},t}$, defined for real value entries, are extended to their vectorial form
\[
\Phi^{(k)}_{n}=\ddf{k-1}{\Phi^{(0)}_{n},\Phi^{(1)}_{n},\ldots,\Phi^{(k-1)}_{n},T_n}
\]
by applying the function component-wise, that is
\[
\phi^{(k)}_{n+r}=\ddf{k-1}{\phi^{(0)}_{n+r},\phi^{(1)}_{n+r},\ldots,\phi^{(k-1)}_{n+r},t_{n+r}},\quad r\bint{1,R}.
\]
\end{defn}

\subsubsection{Structural schemes}
\label{Structural_schemes}

\paragraph{Structural scheme $\SK{1}{R}$.}
\label{sk1R}

The structural scheme $\SK{1}{R}$ involves $2R$ unknowns per block, namely $\phi_{n+r}^{(0)}$ and $\phi_{n+r}^{(1)}$, $r\bint{1, R}$, given $\phi_{n}^{(0)}$ and $\phi_{n}^{(1)}$.
Therefore, it requires $R$ physical equations and $R$ structural equations. The scheme then reads
\[
\left\{
\begin{array}{l}
\left\{\begin{array}{l}
\phi^{(1)}_{n+r}=\ddf{0}{\phi^{(0)}_{n+r},t_{n+r}},\quad r\bint{1,R},
\end{array}\right.\\
\left\{\begin{array}{l}\displaystyle
\sum_{k=0}^1\sum_{r=0}^Ra_{k,r}^s\Delta t^k\phi^{(k)}_{n+r}=0,\quad s\bint{1,R}.
\end{array}\right.
\end{array}
\right.
\]
The analytical expressions of the coefficients of the structural equations are given in~\ref{analytical_expression}.
We rewrite the system in a more compact way
\[
\left\{
\begin{array}{l}
\Phi_{n}^{(1)}=\ddf{0}{\Phi_{n}^{(0)},T_n},\\
\boldsymbol{A}^{(0)}\Phi^{(0)}_{n}+\Delta t\boldsymbol{A}^{(1)}\Phi^{(1)}_{n}+\widehat{\boldsymbol{A}}_n\Psi_{n}=0,
\end{array}
\right.
\]
where $\ddf{0}{\Phi_{n}^{(0)},T_n}$ applies the first physical equation all over the block, while the structural equations provide a linear system with
\[
\boldsymbol{A}^{(0)}=\Big(a_{0,r}^s\Big)_{r,s=1}^R\in\mathbb{R}^{R\times R},\,
\boldsymbol{A}^{(1)}=\Big(a_{1,r}^s\Big)_{r,s=1}^R\in\mathbb{R}^{R\times R},
\]
and
\[
\widehat{\boldsymbol{A}}_n=\Big(a_{k,0}^s\,\Delta t^k\,\Big)_{s=1,\ldots,R\atop k=0,1}\in\mathbb{R}^{R\times 2},\quad\Psi_{n}=\big(\phi_{n}^{(0)},\phi_{n}^{(1)}\big)^\trans.
\]

\paragraph{Structural scheme $\SK{2}{R}$.}
\label{sk2R}
From the first and second physical equations, the $\SK{2}{R}$ scheme reads
\[
\left\{
\begin{array}{l}
\left\{\begin{array}{l}
\phi^{(1)}_{n+r}=\ddf{0}{\phi^{(0)}_{n+r},t_{n+r}},\quad r\bint{1,R},\\%[0.5em]
\phi^{(2)}_{n+r}=\ddf{1}{\phi^{(0)}_{n+r},\phi^{(1)}_{n+r},t_{n+r}},\quad r\bint{1,R},
\end{array}\right.\\%[1em]
\left\{\begin{array}{l}\displaystyle
\sum_{k=0}^2\sum_{r=0}^R a_{k,r}^s\,\Delta t^k\,\phi^{(k)}_{n+r}=0,\quad s\bint{1,R}.
\end{array}\right.
\end{array}
\right.
\]
We rewrite the equations in a more compact way
\[
\left\{
\begin{array}{l}
\Phi_{n}^{(1)}=\ddf{0}{\Phi_{n}^{(0)},T_n},\\
\Phi_{n}^{(2)}=\ddf{1}{\Phi_{n}^{(0)},\Phi_{n}^{(1)},T_n},\\
\boldsymbol{A}^{(0)}\Phi^{(0)}_{n}+\Delta t\boldsymbol{A}^{(1)}\Phi^{(1)}_{n}+\Delta t^2\boldsymbol{A}^{(2)}\Phi^{(2)}_{n}+\widehat{\boldsymbol{A}}_n\Psi_{n}=0.
\end{array}
\right.
\]

\begin{rem}
The structural method is easily extended to higher derivatives. For example, scheme $\SK{R}{3}$, using the compact notation, reads
\[
\left\{
\begin{array}{l}
\Phi_{n}^{(1)}=\ddf{0}{\Phi_{n}^{(0)},T_n},\\
\Phi_{n}^{(2)}=\ddf{1}{\Phi_{n}^{(0)},\Phi_{n}^{(1)},T_n},\\
\Phi_{n}^{(3)}=\ddf{2}{\Phi_{n}^{(0)},\Phi_{n}^{(1)},\Phi_{n}^{(2)},T_n,},\\
\boldsymbol{A}^{(0)}\Phi^{(0)}_{n}+\Delta t\boldsymbol{A}^{(1)}\Phi^{(1)}_{n}+\Delta t^2\boldsymbol{A}^{(2)}\Phi^{(2)}_{n}+\Delta t^3\boldsymbol{A}^{(3)}\Phi^{(3)}_{n}+\widehat{\boldsymbol{A}}_n\Psi_{n}.
\end{array}
\right.
\]
Note that the matrices $\boldsymbol{A}^{(0)},\boldsymbol{A}^{(1)},\ldots,\boldsymbol{A}^{(K)}$ and $\widehat{\boldsymbol{A}}$ have to be recomputed with the kernel method for each scheme.
\hfill\smallBB
\end{rem}

\section{Relation with other methods and novelty}
This section was initially motivated by a private communication from Professor Robert McLachlan concerning the interpretation of the structural scheme as an MDRK scheme when the maximum number of physical equations is used. We further extend the comparison to other traditional, well-established methods. Indeed, the $R$ structural equations, combined with the physical equations, correspond to particular cases of Multi-Derivative Runge--Kutta methods. We prove this equivalence for $K=1$ corresponding to the classical RK scheme, and for $K=2$ corresponding to the two-derivative RK scheme.).

\subsection{Structural method with $K=1$}
\subsubsection{An implicit RK reformulation}
Let $R$ be given, we just use the ODE as physical equations at point $t_{n+r}$, $r\bint{1,R}$, and the structural equations read
\[
\sum_{r=0}^Ra_{0,r}^s\phi^{(0)}_{n+r}+\Delta t\sum_{r=0}^Ra_{1,r}^s\phi^{(1)}_{n+r}=0,\quad s\bint{1,R},
\]
that are rewritten in the form
\[
\sum_{r=1}^Ra_{0,r}^s\phi^{(0)}_{n+r}+\Delta t\sum_{r=0}^Ra_{1,r}^s\phi^{(1)}_{n+r}=- a_{0,0}^s\phi^{(0)}_{n},\quad s\bint{1,R}.
\]
Noting that the constant function satisfies the structural equation, we deduce that
$\displaystyle-a_{0,0}^s=\sum_{r=1}^Ra_{0,r}^s$; hence,
\[
\sum_{r=1}^Ra_{0,r}^s\phi^{(0)}_{n+r}+\Delta t\sum_{r=0}^Ra_{1,r}^s\phi^{(1)}_{n+r}=\phi^{(0)}_{n}\sum_{r=1}^Ra_{0,r}^s,\quad s\bint{1,R}.
\]
Let $\boldsymbol{A}_0=(a_{0,r}^s)_{r,s}$ be the $R\times R$ matrix for the function approximations, $\boldsymbol{A}_1= (a_{1,r}^s)_{s,r}$ the $R\times (R+1)$ matrix for the derivative approximations, and $\Vone$ the vector of entries equal to $1$. Then, we have
\[
\boldsymbol{A}_0\Phi^{(0)}+\Delta t\boldsymbol{A}_1\Phi^{(1)}=\boldsymbol{A}_0\Vone\phi^{(0)}_{n}.
\]
Assuming that $\boldsymbol{A}_0$ is non-singular, then
\[
\Phi^{(0)}+\Delta t\boldsymbol{A}_0^{-1}\boldsymbol{A}_1\Phi^{(1)}=\Vone\phi^{(0)}_{n}
\]
and the system becomes
\[
\Phi^{(0)}=\Vone\phi^{(0)}_{n}-\Delta t\boldsymbol{A}_0^{-1}\boldsymbol{A}_1\Phi^{(1)},
\]
which corresponds to the $(R+1)$-stage RK scheme, given as
\begin{align*}
\phi^{(0)}_{n}&=\phi^{(0)}_{n}+\Delta t\sum_{r=0}^R 0f\big(\phi^{(0)}_{n+r},t_{n+r}\big),\\
\phi^{(0)}_{n+s}&=\phi^{(0)}_{n}+\Delta t\sum_{r=0}^R\alpha_{s,r}f\big(\phi^{(0)}_{n+r},t_{n+r}\big),\quad s\bint{1,R},
\end{align*}
where the coefficients $\alpha_{s,r}$ are the entries of the $R\times (R+1)$ matrix $\boldsymbol{A}_0^{-1}\boldsymbol{A}_1$ and intermediate uniform sub-steps~\cite{MK2024} over the large time-step $\Delta T=R\Delta t$.
Using the RK formalism, we have $c_s=s/R$, $s\bint{0,R}$, and
\begin{gather*}
Y_s=\phi^{(0)}_{n}+\Delta T\sum_{r=0}^Ra_{s,r}f\big(Y_{r},t_{n}+c_r\Delta T\big),\\
\phi^{(0)}_{n+R}=\phi^{(0)}_{n}+\Delta T\sum_{r=0}^R b_{r}f\big(Y_{r},t_{n}+c_r\Delta T\big),
\end{gather*}
with $a_{s,r}=\alpha_{s,r}/R$ and $b_{r}=\alpha_{Rr}/R$, $s\bint{1,R}$, $r\bint{0,R}$, and $a_{0r}=0$.
\begin{rem}
An important property of our construction is that the intermediate steps yield $(R+~\!\!1)$~th-order accurate approximations. Consequently, the intermediate approximations are no longer considered auxiliary calculus but are effective approximations.
\hfill\smallBB
\end{rem}

\begin{example}
For $R=2$, the two structural equations read
\begin{gather*}
0=\frac{\phi^{(0)}_{n+2}-\phi^{(0)}_{n}}{2}-\Delta t\frac{\phi^{(1)}_{n+2}+4\phi^{(1)}_{n+1}+\phi^{(1)}_{n}}{6},\\
0=\phi^{(0)}_{n+2}-2\phi^{(0)}_{n+1}+\phi^{(0)}_{n}-\Delta t\frac{\phi^{(1)}_{n+2}-\phi^{(1)}_{n}}{2},
\end{gather*}
which can be rewritten with $\Delta T=2\Delta t$ as
\begin{gather*}
\phi^{(0)}_{n+2}=\phi^{(0)}_{n}+\Delta T\frac{\phi^{(1)}_{n}+4\phi^{(1)}_{n+1}+\phi^{(1)}_{n+2}}{6},\\
\phi^{(0)}_{n+1}=\phi^{(0)}_{n}+\Delta T\frac{5\phi^{(1)}_{n}+8\phi^{(1)}_{n+1}-\phi^{(1)}_{n+2}}{24},
\end{gather*}
which corresponds to the fourth-order Lobatto IIIA method, given by the Butcher tableau
\[
\begin{tblr}{c|rrr}
0 & 0 & 0 & 0\\
\frac{1}{2} & \frac{5}{24} & \frac{8}{24} & -\frac{1}{24}\\
1 & \frac{1}{6} & \frac{4}{6} & \frac{1}{6}\\\hline
& \frac{1}{6} & \frac{4}{6} & \frac{1}{6}
\end{tblr}
\]
\end{example}

\ra{
\subsubsection{Collocation method}
One advantage of the structural scheme approach is that we provide the Butcher tableau without using the Algebraic Order Conditions~\cite{Butcher2008}, Sec. 23. Indeed, the number of conditions increases with the order, and the relations turn more non-linear by involving a larger number of multiplications and summations~\cite{MK2024} for $p=5,6$. The RK scheme derived from the structural equations does not require any complex analytic calculation of the coefficients, since they are determined by the kernel basis. Consequently, the structural method easily provides an RK scheme of arbitrary order.
\\
The collocation method \cite{GS69,W70,CAP11} enjoys the same property and enables to easily design a subclass of high-order implicit RK method with simple linear conditions (the collocation point being given \cite{CAP11}), to provide the expected order. In short, the method consists in seeking a polynomial function $\pi$ of degree $m$ such that
\begin{eqnarray*}
\pi(t_n)&=&\phi_n\\
\pi'(t_n+c_i\Delta t)&=&f\big(\pi'(t_n+c_i\Delta t),t_n+c_i\Delta t\big ), i=1,\ldots,m,
\end{eqnarray*}
with $c_i$ the collocation parameters. We then take $\phi_{n+1}=\pi(t_n+\Delta t)$ as an approximation at time $t_{n+1}$. Comparing with the structural method, we do not introduce intermediate point between $t_n$ and $t_{n+1}$.
Nevertheless, we can artificially built similar conditions of a collocation method by taking $\Delta T=R\Delta t$ as a super time step and $t_{n+r}=t_n+c_r\Delta R$ with $c_r=\frac{r}{R}$ as collocation coefficients uniformly distributed. The point is that the structural method  uses the information $\phi_n^1=f(\phi_n^0,t_n)$ and  $\phi_{n+1}^1=f(\phi_{n+1}^0,t_{n+1})$ in the approximation whereas the collocation method does not. Consequently, the equivalent Butcher tableau of the structural method will be different of the one associated to the collocation method.
\begin{rem}
We have presented the $\SK{1}{R}$ with a uniform grid for the sake of simplicity to provide a $(R+1)$th-order approximations, but it is perfectly possible to consider a block of $R$ times with a non-uniform distribution (Gauss Lobatto collocation points for instance). Questions then arise about possible improvement of accuracy and the stability of the method.
\end{rem}
At last, the structural method enjoys the A-stability property whereas the stability of the Collocation method (A- or L-stability) depends on the choice of the coefficients $c_i$ (Legendre, Gauss, Radeau, Lobatto) \cite{CAP11}.
\\
The multistep collocation method extension \cite{LN89,L90} used additional log-data calculated from the past to improve the accuracy. The structural method only considers the approximations after the last time step $t_n$, therefore the two methods clearly differ.
}

\subsection{Structural equations with $K=2$}
\subsubsection{A Two-Derivative RK (TDRK) reformulation}
We proceed with the second derivative ($K=2$) structural method, for which the structural equations read
\[
\sum_{r=0}^Ra_{0,r}^s\phi^{(0)}_{n+r}+\Delta t\sum_{r=0}^Ra_{1,r}^s\phi^{(1)}_{n+r}+
\Delta t^2\sum_{r=0}^Ra_{2,r}^s\phi^{(2)}_{n+r}=0,\quad s\bint{1,R},
\]
that are rewritten in the form
\[
\sum_{r=1}^Ra_{0,r}^s\phi^{(0)}_{n+r}+
\Delta t\sum_{r=0}^Ra_{1,r}^s\phi^{(1)}_{n+r}+
\Delta t^2\sum_{r=0}^Ra_{2,r}^s\phi^{(2)}_{n+r}
=- a_{0,0}^s\phi^{(0)}_{n},\quad s\bint{1,R}.
\]
Once again, the constant function satisfies the structural equation, then
\[
\sum_{r=1}^Ra_{0,r}^s\phi^{(0)}_{n+r}+
\Delta t\sum_{r=0}^Ra_{1,r}^s\phi^{(1)}_{n+r}+
\Delta t^2\sum_{r=0}^Ra_{2,r}^s\phi^{(2)}_{n+r}=\phi^{(0)}_{n}\sum_{r=1}^Ra_{0,r}^s,\quad s\bint{1,R}.
\]
Assuming that $\boldsymbol{A}_0=(a_{0,r}^s)_{r,s}$ is a non-singular $R\times R$ matrix and defining the $R\times (R+1)$ matrices $\boldsymbol{A}_1= (a_{1,r}^s)_{s,r}$ and $\boldsymbol{A}_2=(a_{2,r}^s)_{s,r}$, the structural equations are reformulated in the compact form as
\[
\Phi^{(0)}=\Vone\phi^{(0)}_{n}-\Delta t\,\boldsymbol{A}_0^{-1}\boldsymbol{A}_1\Phi^{(1)}-\Delta t^2\,\boldsymbol{A}_0^{-1}\boldsymbol{A}_2\Phi^{(2)}.
\]
Denote $\alpha^{1}_{s,r}$ and $\alpha^{2}_{s,r}$ the entries of the $R\times (R+1)$ matrices $-\boldsymbol{A}_0^{-1}\boldsymbol{A}_1$ and $-\boldsymbol{A}_0^{-1}\boldsymbol{A}_2$, respectively. The scheme is written in the form
\begin{gather*}
\phi^{(0)}_{n+s}=\phi^{(0)}_{n}+\Delta t\sum_{r=0}^R\alpha^{1}_{s,r}\,\phi^{(1)}_{n+r}
+\Delta t^2\sum_{r=0}^R\alpha^{2}_{s,r}\,\phi^{(2)}_{n+r},\\
\phi^{(1)}_{n+s}=f\big(\phi^{(0)}_{n+s},t_{n+s}\big),\\%[0.3em]
\phi^{(2)}_{n+s}=\partial_zf\big(\phi^{(0)}_{n+s},t_{n+s}\big)\phi^{(1)}_{n+s}+\partial_tf\big(\phi^{(0)}_{n+s},t_{n+s}\big),\\
\end{gather*}
with $s\bint{1,R}$.
Reshaping the scheme in the RK framework yields
\begin{gather*}
Y^{0}_s=\phi^{(0)}_{n}+\Delta T\sum_{r=0}^R a_{s,r}^1 Y^{1}_r+\Delta T^2\sum_{r=0}^R a_{s,r}^2 Y^{2}_r,\\
Y^{1}_s=f\big(Y^{0}_{s},t_{n}+c_s\Delta T\big),\\
Y^{2}_s=\partial_zf\big(Y^{0}_{s},t_{n}+c_s\Delta T\big)Y^{1}_{s}+\partial_tf\big(Y^{0}_{s},t_{n}+c_s\Delta T\big),\\
\phi^{(0)}_{n+R}=\phi^{(0)}_{n}+\Delta T\sum_{r=0}^R b_{r}^1 Y^{1}_r+\Delta T^2\sum_{r=0}^R b_{r}^2 Y^{2}_r,
\end{gather*}
with $a^o_{s,r}=\alpha^o_{s,r}/R$ and $b^o_{r}=\alpha^o_{R,r}/R$, $s\bint{1,R}$, $r\bint{0,R}$, and $a^o_{0,r}=0$, $o=1,2$, which corresponds to the TDRK scheme with $R+1$ stages~\cite{TCW14}.

\begin{example}
For $R=2$, the two structural equations read
\begin{align*}
0=&
+24\phi^{(0)}_{n}+9\Delta t\phi^{(1)}_{n}+\Delta t^2\phi^{(2)}_{n}
-48\phi^{(0)}_{n+1}+0\Delta t\phi^{(1)}_{n+1}-8\Delta t^2\phi^{(2)}_{n+1}\\
&+24\phi^{(0)}_{n+2}-9\Delta t\phi^{(1)}_{n+2}+\Delta t^2\phi^{(2)}_{n+2},\\
0=&
-15\phi^{(0)}_{n}-7\Delta t\phi^{(1)}_{n}-\Delta t^2\phi^{(2)}_{n}
+0\phi^{(0)}_{n+1}-16\Delta t\phi^{(1)}_{n+1}+0\Delta t^2\phi^{(2)}_{n+1}\\
&+15\phi^{(0)}_{n+2}-7\Delta t\phi^{(1)}_{n+2}+\Delta t^2\phi^{(2)}_{n+2}=0,
\end{align*}
which can be rewritten with $\Delta T=2\Delta t$ as
\begin{gather*}
\phi^{(0)}_{n+2}=\phi^{(0)}_{n}+\Delta T\frac{7\phi^{(1)}_{n}+16\phi^{(1)}_{n+1}+7\phi^{(1)}_{n+2}}{30}
+\Delta T^2\frac{\phi^{(2)}_{n}-\phi^{(2)}_{n+2}}{60},\\
\phi^{(0)}_{n+1}=\phi^{(0)}_{n}+\Delta T\frac{303\phi^{(1)}_{n}+284\phi^{(1)}_{n+1}+33\phi^{(1)}_{n+2}}{1440}
+\Delta T^2\frac{39\phi^{(2)}_{n}-120\phi^{(2)}_{n+1}-3\phi^{(2)}_{n+2}}{2800},
\end{gather*}
given by the extended Butcher tableau
\[
\begin{tblr}{c|rrr|rrr}
0 &0 &0 & 0 &0 &0 & 0\\
\frac{1}{2}&\frac{303}{1440}&\frac{284}{1440}&\frac{33}{1440} &\frac{39}{2880}&-\frac{120}{2880}&-\frac{3}{2880}\\
1 &\frac{7}{30}&\frac{16}{30}&\frac{7}{30} &\frac{1}{60}&0&-\frac{1}{60}\\\hline
&\frac{7}{30}&\frac{16}{30}&\frac{7}{30} &\frac{1}{60}&0&-\frac{1}{60}
\end{tblr}
\]
\end{example}
\subsubsection{A non-TDRK scheme}

The structural equations provide an MDRK only when the maximum number of physical equations ($K$) is used. For example, taking $K=2$ and $R=1$, the following scheme is composed of one physical equation and two structural equations
\begin{gather*}
0=12(\phi^{(0)}_{n+1}-\phi^{(0)}_{n})-\Delta t\,6(\phi^{(1)}_{n+1}+\phi^{(1)}_{n})+\Delta t^2\,(\phi^{(2)}_{n+1}-\phi^{(2)}_{n}),\\
0=6(\phi^{(0)}_{n+1}-\phi^{(0)}_{n})-\Delta t\,(4\phi^{(1)}_{n+1}+2D_{n})+\Delta t^2\,\phi^{(2)}_{n+1},\\
\phi^{(1)}_{n+1}=f(\phi^{(0)}_{n+1},t_{n+1}),
\end{gather*}
has no equivalent formulation in the TDRK framework. The second physical equation has been replaced by a second structural equation; therefore, the scheme does not require the analytical expression of the derivative of $f(z,t)$.

The above scheme is third-order accurate for the function and derivative, whereas the approximation of the second derivative is second-order accurate. Moreover, stability is no longer unconditional. \ra{A restrictive condition on the time step $\Delta t$ is required} to guarantee the convergence of the approximation, as shown in reference~\cite{Clain2023}. In general, configurations involving fewer physical equations and more structural equations are possible. Nevertheless, a loss of the unconditional stability property, together with lower accuracy, leads us to disregard such schemes in the present work.

\ra{
\subsubsection{Multi-derivative collocation method}
Recently, a two-derivative collocation method~\cite{FH20} has been proposed. In fact, the authors also introduce the additional term $\phi_{n-1}$, which gives the method a two-step character. The main idea consists of introducing a Hermite interpolation polynomial $\pi$ such that
\begin{eqnarray*}
\pi(t_n)&=&\phi_n\\
\pi'(t_n+c_i\Delta t)&=&\ddf{0}{\pi(t_n+c_i\Delta t),t_n+c_i\Delta t}, i=1,\ldots,m,\\
\pi''(t_n+c_i\Delta t)&=&\ddf{0}{\pi(t_n+c_i\Delta t),\pi'(t_n+c_i\Delta t),t_n+c_i\Delta t}, i=1,\ldots,m,
\end{eqnarray*}
where $c_i$ are the collocation parameters. Once again, we take $\phi_{n+1}=\pi(t_n+\Delta t)$ as an approximation at time $t_{n+1}$, and the authors prove A-stability. To the best of our knowledge, and owing to the very limited literature on this technique, we do not know whether the method is equivalent to a two-derivative RK scheme.
}

\rb{
\subsection{Obrechkoff-type method}
Obrechkoff methods~\cite{O40} compute approximations of the function and its derivative at time $t_{n+1}$ using information from the function and its derivatives at time $t_n$. An extension was proposed in~\cite{U70,C78}, using two time steps and two derivatives; see also the Enright schemes~\cite{E74a,E74b}. The two-derivative multistep Obrechkoff-type method was studied in~\cite{KG82,II99,HA06}. More generally, multistep multi-derivative methods compute the next time step using backlog data from the previous $R$ nodes~\cite{GS64,RA97,NF03,S16}; see also~\cite{G64,SS13,AI19} for hybrid versions designed to enhance the stability of the method.
\\
Using our notations, the generalized Obrechkoff method reads
\[
\sum_{k=0}^K \alpha_k \phi_{n+1}^{(k)}=\sum_{r=0}^R \sum_{k=0}^K \beta_{kr} \phi_{n-r}^{(k)},
\]
with
\[
\phi_{n+1}^{(k)}=\ddf{k-1}{\Phi_{n+1}^{(0)},\cdots,\Phi_{n+1}^{(k-1)},t_{n+1}},\quad k=1,\ldots,K.
\]
The scheme is implicit with respect to the current time step $n+1$, but explicit in terms of the previous time steps. The improvement in order is achieved by exploiting the information computed at the $R$ earlier times. This class of schemes differs from the structural method proposed in the present work, since the resulting system is fully implicit with respect to all $R$ time steps, except for the time $t_n$.
\
We would like to emphasise that the structural equations technique can also be applied to determine the coefficients of Obrechkoff methods. Indeed, considering the functional
\[
E(\balpha\bbeta;\pi)=\sum_{k=0}^K \alpha_k \phi_{n+1}^{(k)}-\sum_{r=0}^R \sum_{k=0}^K \beta_{r,k} (\Delta t)^k\phi_{n-r}^{(k)},
\]
the conditions $E(\balpha\bbeta;\pi_\ell)=0$ with $\displaystyle \pi_\ell(t)=\left ( \frac{t-t_n}{\Delta t}\right )^\ell$, $\ell=0,\ldots,p$ provide the $p+1$ linear system
\[
A_1\balpha-\sum_{r=0}^{R} B_{r}\bbeta_{r}=0,\quad  \textrm{with } \bbeta_r=(\beta_{r,k})_{k=0}^{k=K}
\]
with $A_1$ a $p\times (K+1)$ matrix while $B_r$ are $p\times (K+1)$ matrices given by the polynomial derivative at $t_{n-r}$. If $p<(R+1)(K+1)$ and the system has maximal rank, then any vector in the kernel provides an Obrechkoff relation. Since there are only $K+1$ unknowns, namely the solution and its derivatives at time $t_{n+1}$, and $K$ physical equations at time $t_{n+1}$, a single vector from the kernel is sufficient to close the system.
\\
We note that our approach avoids the need to determine explicit relations. One only has to construct the matrices and extract a vector from the kernel. Moreover, the vector can be chosen so as to ensure the best stability properties.
}

\section{Numerical solver}
\label{sec::numerical_solver}

An efficient iterative algorithm is proposed in this section for solving the(non-)linear system of equations arising from the structural schemes $\SK{1}{R}$, $\SK{2}{R}$, and $\SK{3}{R}$.

\subsection{Algorithm for the structural schemes}

\paragraph{Structural scheme $\SK{1}{R}$.}
The compact matrix form of the structural scheme $\SK{1}{R}$ reads
\[
\left\{
\begin{array}{l}
\Phi_{n}^{(1)}=\ddf{0}{\Phi_{n}^{(0)},T_n},\\
\boldsymbol{A}^{(0)}\Phi^{(0)}_{n}+\Delta t\boldsymbol{A}^{(1)}\Phi^{(1)}_{n}+\widehat{\boldsymbol{A}}_n\Psi_{n}=0,
\end{array}
\right.
\]
for which a fixed-point iterative sequence of $2\times R$ matrices is introduced, given as
\[
\Phi_{n}[\ell]=\left(\phi^{(k)}_{n+r}[\ell]\right)_{k,r},k=0,1,\,r\bint{1,R},
\]
where $\ell$ is the iteration number. The algorithm is as follows.

\begin{itemize}
\item \textbf{Iterative procedure:} Assume that approximations of the function and derivative in vector $\Phi_{n}[\ell]$ at iteration $\ell$ are known. The iterative process then proceeds in two main stages:
\begin{enumerate}
\item[\ding{192}] Compute approximations of the derivative in vector $\displaystyle\Phi^{(1)}_{n}[\ell+1]$ using the first physical equation $\PE{1}(r)$ at time steps $t_{n+r}$, $r\bint{1,R}$, that is
\[
\Phi_{n}^{(1)}[\ell+1]=\ddf{0}{\Phi_{n}^{(0)}[\ell],T_n}.
\]
\item[\ding{193}] Compute approximations of the function in vector $\displaystyle\Phi^{(0)}_{n}[\ell+1]$ by solving a $R\times R$
linear system in the form
\[
\boldsymbol{A}^{(0)}\Phi^{(0)}_{n}[\ell+1]=-\Delta t\boldsymbol{A}^{(1)}\Phi^{(1)}_{n}[\ell+1]-\widehat{\boldsymbol{A}}_n\Psi_{n}
\]
Note that matrix $\boldsymbol{A}^{(0)}$ must be non-singular to determine vector $\Phi^{(0)}_{n}[\ell+1]$.
\end{enumerate}
The iterative procedure stops when the residual between successive approximations is below a specified tolerance.

\item \textbf{Initialisation procedure:} At the beginning of the iterative procedure, vector $\Psi_{n}$ is known but an approximation to vectors $\Psi_{n+r}[0]$, $r\bint{1,R}$, is desirable. To this end, a first-order accurate Taylor expansion can be employed, yielding
\[
\phi^{(0)}_{n+r}[0]=\phi^{(0)}_{n+r-1}[0]+\Delta t\phi^{(1)}_{n+r-1}[0],\quad
\phi^{(1)}_{n+r}[0]=\ddf{1}{\phi^{(0)}_{n+r}[0],t_{n+r}},\,r\bint{1,R},
\]
where $\phi^{(0)}_{n}[0]=\phi^{(0)}_{n}$ and $\phi^{(1)}_{n}[0]=\phi^{(1)}_{n}$.
\begin{rem}
A good initialisation is essential for reducing the number of iterations and ensuring that the initial point lies within the convergence basin of the fixed-point iteration.
\hfill\smallBB
\end{rem}
\end{itemize}

\paragraph{Structural scheme $\SK{2}{R}$.}
The procedure for the structural scheme $\SK{2}{R}$ is analogous, introducing a fixed-point iterative sequence of $3\times R$ matrices, given as
\[
\Phi_{n}[\ell]=\left(\phi^{(k)}_{n+r}[\ell]\right)_{k,r},\,k=0,1,2,\,r\bint{1,R}.
\]
The algorithm is as follows.

\begin{itemize}
\item \textbf{Iterative procedure:} Assume that approximations of the function and derivatives in vector $\Phi_{n}[\ell]$ at iteration $\ell$ are known. The iterative process then proceeds in two main stages:
\begin{enumerate}
\item[\ding{192}] Compute approximations of the derivatives in vectors $\Phi^{(1)}_{n}[\ell+1]$ and $\Phi^{(2)}_{n}[\ell+1]$ using the first and second physical equations, respectively, at time steps $t_{n+r}$, $r\bint{1,R}$, that is
\begin{gather*}
\Phi_{n}^{(1)}[\ell+1]=\ddf{0}{\Phi_{n}^{(0)}[\ell],T_n},\\
\Phi_{n}^{(2)}[\ell+1]=\ddf{1}{\Phi_{n}^{(0)}[\ell],\Phi_{n}^{(1)}[\ell+1],T_n}.
\end{gather*}
\item[\ding{193}] Compute approximations of the function in vector $\displaystyle\Phi^{(0)}_{n}[\ell+1]$ by solving a $R\times R$
linear system in the form
\[
\boldsymbol{A}^{(0)}\Phi^{(0)}_{n}[\ell+1]=-\Delta t\boldsymbol{A}^{(1)}\Phi^{(1)}_{n}[\ell+1]-\Delta t^2\boldsymbol{A}^{(2)}\Phi^{(2)}_{n}[\ell+1]-\widehat{\boldsymbol{A}}_n\Psi_{n}
\]
assuming that matrix $\boldsymbol{A}^{(0)}$ is non-singular.
\end{enumerate}

\item \textbf{Initialisation procedure:} An approximation to vectors $\Psi_{n+r}[0]$, $r\bint{1,R}$, is prescribed from a second-order accurate Taylor expansion, given as
\begin{gather*}
\phi^{(0)}_{n+r}[0]=\phi^{(0)}_{n+r-1}[0]+\Delta t\phi^{(1)}_{n+r-1}[0]+\frac{\Delta t^2}{2}\phi^{(2)}_{n+r-1}[0],\\
\phi^{(1)}_{n+r}[0]=\ddf{0}{\phi^{(0)}_{n+r}[0],t_{n+r}},\\
\phi^{(2)}_{n+r}[0]=\ddf{1}{\phi^{(0)}_{n+r}[0],\phi^{(1)}_{n+r}[0],t_{n+r}},
\end{gather*}
where $\phi^{(0)}_{n}[0]=\phi^{(0)}_{n}$, $\phi^{(1)}_{n}[0]=\phi^{(1)}_{n}$, $\phi^{(2)}_{n}[0]=\phi^{(2)}_{n}$.
\end{itemize}

\paragraph{Structural scheme $\SK{3}{R}$.}
A fixed-point iterative sequence of $4\times R$ matrices is introduced for the structural scheme $\SK{3}{R}$, given as
\[
\Phi_{n}[\ell]=\left(\phi^{(k)}_{n+r}[\ell]\right)_{k,r},\,k=0,\ldots,3,\,r\bint{1,R}.
\]
The algorithm is as follows.

\begin{itemize}
\item \textbf{Iterative procedure:} Assume that approximations of the function and derivatives in vector $\Phi_{n}[\ell]$ at iteration $\ell$ are known. The iterative process then proceeds in two main stages:
\begin{enumerate}
\item[\ding{192}] Compute approximations of the derivatives in vectors $\Phi^{(1)}_{n}[\ell+1]$, $\Phi^{(2)}_{n}[\ell+1]$, and $\Phi^{(3)}_{n}[\ell+1]$ using the first, second, and third physical equations, respectively, at time steps $t_{n+r}$, $r\bint{1,R}$, that is
\begin{gather*}
\Phi_{n}^{(1)}[\ell+1]=\ddf{0}{\Phi_{n}^{(0)}[\ell],T_n},\\
\Phi_{n}^{(2)}[\ell+1]=\ddf{1}{\Phi_{n}^{(0)}[\ell],\Phi_{n}^{(1)}[\ell+1],T_n},\\
\Phi_{n}^{(3)}[\ell+1]=\ddf{2}{\Phi_{n}^{(0)}[\ell],\Phi_{n}^{(1)}[\ell+1],\Phi_{n}^{(2)}[\ell+1],T_n}.
\end{gather*}
\item[\ding{193}] Compute approximations of the function in vector $\displaystyle\Phi^{(0)}_{n}[\ell+1]$ by solving a $R\times R$
linear system in the form
\[
\boldsymbol{A}^{(0)}\Phi^{(0)}_{n}[\ell+1]=-\Delta t\boldsymbol{A}^{(1)}\Phi^{(1)}_{n}[\ell+1]-\Delta t^2\boldsymbol{A}^{(2)}\Phi^{(2)}_{n}[\ell+1]-\Delta t^3\boldsymbol{A}^{(3)}\Phi^{(3)}_{n}[\ell+1]-\widehat{\boldsymbol{A}}_n\Psi_{n}.
\]
\end{enumerate}

\item \textbf{Initialisation procedure:} An approximation to vectors $\Psi_{n+r}[0]$, $r\bint{1,R}$, is prescribed from a third-order accurate Taylor expansion, given as
\begin{gather*}
\phi^{(0)}_{n+r}[0]=\phi^{(0)}_{n+r-1}[0]+\Delta t\phi^{(1)}_{n+r-1}[0]+\frac{\Delta t^2}{2}\phi^{(2)}_{n+r-1}[0]+\frac{\Delta t^3}{6}\phi^{(3)}_{n+r-1}[0],\\
\phi^{(1)}_{n+r}[0]=\ddf{0}{\phi^{(0)}_{n+r}[0],t_{n+r}},\\
\phi^{(2)}_{n+r}[0]=\ddf{1}{\phi^{(0)}_{n+r}[0],\phi^{(1)}_{n+r}[0],t_{n+r}},\\
\phi^{(3)}_{n+r}[0]=\ddf{2}{\phi^{(0)}_{n+r}[0],\phi^{(1)}_{n+r}[0],\phi^{(2)}_{n+r}[0],t_{n+r}},
\end{gather*}
where $\phi^{(k)}_{n}[0]=\phi^{(k)}_{n}$, $k\bint{0,3}$.
\end{itemize}

\subsection{Complexity}\rb{
The $\SK{K}{R}$ method computes successive $R$-point blocks of approximations, with a total number of points $N=LR$, where $L$ denotes the number of blocks. For each block, an iterative procedure is required to obtain satisfactory approximations. Each step of the iterative procedure is decomposed into two parts: the derivatives' update
\[
\Phi_n^0[\ell] \to \Phi_n^1[\ell+1]=\ddf{0}{\Phi_{n}^{(0)}[\ell]} \to \cdots \to \Phi_n^K[\ell+1]=\ddf{K-1}{\Phi_{n}^{(0)}[\ell],\cdots,\Phi_{n}^{(K-1)}[\ell]},
\]
and the solution update
\[
\Phi^{(0)}_{n}[\ell+1]=-\sum_{k=1}^K(\Delta t)^k\boldsymbol{B}^{(k)}\Phi^{(k)}_{n}[\ell+1]-\widehat{\boldsymbol{B}}_n\Psi_{n},
\]
with $\boldsymbol{B}^{(k)}=\Big[\boldsymbol{A}^{(0)}\Big]^{-1} \boldsymbol{A}^{(k)}$ and $\widehat{\boldsymbol{B}}_n\Psi_{n}=\Big[\boldsymbol{A}^{(0)}\Big]^{-1} \widehat{\boldsymbol{A}}_n\Psi_{n}$.
Of course, the $R\times R$ matrices $\boldsymbol{B}^{(k)}$ are precomputed once and thus the system only involves matrix vector product.
\\
To assess the complexity of the method, two major difficulties arise. First, the cost of evaluating $\Phi_n^1[\ell+1]$, corresponding to the first derivative, may differ substantially from the cost of evaluating $\Phi_n^K[\ell+1]$, corresponding to the highest derivative. Second, the number of iterations depends on the contraction properties of the fixed-point iteration and on the user-prescribed tolerance used to stop the loop.
\begin{itemize}
\item The computation of higher derivatives involves processing more information. Nevertheless, the corresponding routines are highly vectorised, since most arithmetic operations are not sequential. Consequently, we assume that the running time of the calls used to compute each derivative is approximately $R\tau_d$, where $\tau_d$ denotes the computational time associated with each point $t_{n+r}$.
\item The linear combination that provides $\Phi^{(0)}_{n}[\ell+1]$ is also highly parallelisable, or vectorisable, since it only involves independent matrix-vector products and summations. Consequently, we assume that the running-time complexity required to update the vector is $RK\tau_m$, where $\tau_m$ denotes the time required to compute a row-column product.
\item The total time required for one cycle is therefore $RK\tau_m+R(K-1)\tau_d$. For simple problems, $\tau_m$ and $\tau_d$ are of the same order, leading to a complexity of $O(2RK\tau_d)$. When the nonlinear contribution is more complex, for instance when it involves trigonometric or exponential functions, we have $\tau_d\gg\tau_m$, and the complexity becomes $O(RK\tau_d)$.
\end{itemize}
}

\section{Post-processing}

Previously, we focused on fully implicit structural equations, where all derivatives up to order K are coupled with the physical equations. However, using explicit structural equations may help compute higher derivatives beyond those obtained directly from the structural scheme. For example, the scheme $\SK{3}{3}$ yields approximations up to the third derivative, whereas post-processing could approximate fourth- or fifth-order derivatives. In general, the structural scheme \SK{K}{R} produces $\phi^{(k)}_N\approx\phi^{(k)}(t_N)$ for $k \bint{0,K}$ at the final time, with convergence order $d=K(R+1)$. Building on these approximations, we present a method to approximate higher derivatives $\phi_N^{(p)}$ for $p>K$.

One basic strategy is to derive additional physical equations, $\PE{K+1}$ to $\PE{p}$. However, this approach often becomes impractical, particularly if the partial derivatives are complex or the physical equation uses tabulated functions. As a practical alternative, our goal is to provide an approximation $\phi_N^{(p)}\approx\phi^{(p)}(t_N)$ with the best precision using an explicit structural equation (referred to as the post-processing structural equation) that uses the approximations $\phi_{n}^{(k)}$ computed from the structural scheme. The explicit relation takes the form
\[
\phi_N^{(p)}=\sum_{i=0}^{I_{p}}\sum_{k=0}^Ka^{p}_{ik}\,\Delta t^k\,\phi_{N-i}^{(k)},
\]
where $I_p$ is the number of backward time steps required to exactly approximate any polynomial up to degree $d$. To determine the $(I_p+1)\times(K+1)$ matrix $\ba^p=\big(a^{p}_{ik}\big)_{i=0,\ldots,I_p\atop k=0,\ldots,K}$, we introduce the functional
\[
E_p(\ba^p;\pi)=\sum_{i=0}^{I_p}\sum_{k=0}^Ka^{p}_{ik}\,\Delta t^k\,\pi^{(k)}(t_{N-i})-\pi^{(p)}(t_N),
\]
for any function $\pi=\pi(t)$, regular enough.

We reshape the matrix $\ba^p$ into a single-index entry vector $\ba^p$ (we use the same notation for simplicity) with one-to-one mapping $(i,k)\to\ell=\ell(i,k)=k(I_p+1)+i+1$ from $\bbint{0,K}\times\bbint{0,I_p}$ onto $\bbint{1,M_p}$, with $M_p=(K+1)(I_p+1)$ the number of entries from $\ba^p$, and the functional can be rewritten as
\[
E_p(\ba^p;\pi)=\sum_{i=0}^{I_p}\sum_{k=0}^Ka^{p}_\ell\,\Delta t^k\,\pi^{(k)}(t_{N-i})-\pi^{(p)}(t_N),
\quad\ell=\ell(i,k).
\]

To ensure accuracy, we require that the functional exactly satisfies $E(\ba;\pi_m)=0$ for polynomial functions up to a chosen degree. For this, we consider the following set of polynomial functions
\[
\pi_m(t;t_N)=\left(\frac{t-t_N}{\Delta t}\right)^{m-1},\quad m\bint{1,\overline{M}_p},\,\overline{M}_p=p+d.
\]
Note that the number of coefficients is $M_p$ and we need to impose the condition $\overline{M}_p\leqslant M_p$ such that the linear system has at least one non-trivial solution. We then deduce a sufficient condition for the number of backward time steps $I_p$ as
\[
p+K(R+1)-K-1\leqslant I_p(K+1)\implies I_p\geqslant\frac{KR+p-1}{K+1}.
\]

\begin{rem}
For $K=1$ and $R=2$, we have $I_{p}\geqslant(1+p)/2$, while $K=3$ and $R=4$ requires $I_{p}\geqslant(11+p)/4$.
\hfill\smallBB
\end{rem}

To compute the coefficients in $\ba^p$, we solve an under-determined linear system $\mathcal{M}^p\ba^p=\bb^p$. Here, $\mathcal{M}^p$ is a $\overline{M}_p\times M_p$ matrix. The general solution consists of a particular solution together with any vector in the kernel of $\mathcal{M}^p$. For practical computation, the least-squares method is effective, since $\mathcal{M}^p$ has maximal rank.

The entries of matrix $\mathcal{M}^p$ given by the $k$-th derivative of the polynomial functions $\pi_m(t;t_N)$, $m\bint{0,\overline{M}_p}$, read
\[
\pi_m^{(k)}(t;t_N)=\frac{(m-1)\cdots(m-k)}{\Delta t^k}\left(\frac{t-t_N}{\Delta t}\right)^{m-1-k},\,k<m,
\]
and
\[
\pi_m^{(k)}(t;t_N)=0,\,k\geqslant m.
\]
After calculations, for $k<m$ and $i\bint{0,I_p}$, yields
\[
\pi_m^{(k)}(t_{N-i};t_N)=\frac{(m-1)\cdots(m-k)}{\Delta t^k}(-i)^{m-1-k}.
\]
Then, the entries of matrix $\mathcal{M}^p$ are given as
\[
\mathcal{M}^p[m,\ell]=C^m_\ell(-i)^{m-1-\ell},\quad C^m_\ell=
\begin{cases}
\frac{m!}{(m-\ell)!} & \text{if $m>\ell$},\\
0 & \text{otherwise},
\end{cases}
\quad\ell=\ell(i,k),
\]
for $m\bint{1,\overline{M}_p}$, $\ell\bint{1,M_p}$. Several cases of post-processing analytical formulas are given in~\ref{appendix::postproc}.

\begin{rem}
For larger values of $K$ and $p$, or for non-uniform time grids, the matrix $\mathcal{M}^p$ and right-hand side vector $\bb^p$ are still computed numerically. This involves enforcing the condition $E_p(\ba^p;\pi)=0$ for $m$ in $\bint{0,\overline{M}_p}$. In such cases, applying the least-squares method provides a solution and yields the post-processing structural equation.
\end{rem}

We propose a simple benchmark to assess approximation errors, using the function $\phi(t)=\exp(t)$ as an example. We compute $\phi^{(p)}_N$ with the explicit structural equations, where $\phi^{(k)}_{N-i}$ are substituted with the exact values. The errors at the final time $\phi^{(p)}(T)-\phi^{(p)}_N$, $T=1$, together with the convergence orders are provided in Tables~\ref{ODE1_K1_consistency},~\ref{ODE1_K2_consistency}, and~\ref{ODE1_K3_consistency}. Several examples of $p$-th derivative approximation using post-processing structural equations are assessed with different parameter values $K$ and $R$.

\begin{table}[ht]
\centering
\caption{Consistency errors with $K=1$.}
\label{ODE1_K1_consistency}
\begin{tabular}{@{}r@{}r@{}r@{}r@{}rr@{}r@{}rr@{}r@{}rr@{}}
\toprule
Ip & \phantom{aaa} & N & \phantom{aaa} &
\multicolumn{2}{@{}l@{}}{$\phi^{(2)}$} & \phantom{aaa} &
\multicolumn{2}{@{}l@{}}{$\phi^{(3)}$} & \phantom{aaa} &
\multicolumn{2}{@{}l@{}}{$\phi^{(4)}$}\\
\cmidrule{5-6}\cmidrule{8-9}\cmidrule{11-12}
&& && err & ord && err & ord && err & ord\\
\midrule
\multirow{3}{*}{1} && 60 && 6.25E-05 & \emdash && 2.25E-02 & \emdash && 2.72E+00 & \emdash\\
&& 120 && 1.57E-05 & 2.0 && 1.13E-02 & 1.0 && 2.72E+00 & 0.0\\
&& 240 && 3.93E-06 & 2.0 && 5.66E-03 & 1.0 && 2.72E+00 & 0.0\\
\midrule
\multirow{3}{*}{2} && 60 && 2.30E-09 & \emdash & & 1.24E-06 & \emdash & & 3.23E-04 & \emdash\\
&& 120 && 1.45E-10 & 4.0 & & 1.56E-07 & 3.0 & & 8.13E-05 & 2.0\\
&& 240 && 9.07E-12 & 4.0 & & 1.96E-08 & 3.0 & & 2.04E-05 & 2.0\\
\midrule
\multirow{3}{*}{3} && 60 && 1.02E-13 & \emdash & & 6.72E-11 & \emdash & & 2.36E-08 & \emdash\\
&& 120 && 1.61E-15 & 6.0 & & 2.12E-12 & 5.0 & & 1.49E-09 & 4.0\\
&& 240 && 2.53E-17 & 6.0 & & 6.67E-14 & 5.0 & & 9.36E-11 & 4.0\\
\midrule
\multirow{3}{*}{4} && 60 && 4.98E-18 & \emdash & & 3.74E-15 & \emdash & & 1.56E-12 & \emdash\\
&& 120 && 1.98E-20 & 8.0 & & 2.97E-17 & 7.0 & & 2.48E-14 & 6.0\\
&& 240 && 7.78E-23 & 8.0 & & 2.33E-19 & 7.0 & & 3.90E-16 & 6.0\\
\midrule
\multirow{3}{*}{5} && 60 && 2.60E-22 & \emdash & & 2.14E-19 & \emdash & & 1.01E-16 & \emdash\\
&& 120 && 2.59E-25 & 10.0 & & 4.26E-22 & 9.0 & & 4.01E-19 & 8.0\\
&& 240 && 2.55E-28 & 10.0 & & 8.40E-25 & 9.0 & & 1.58E-21 & 8.0\\
\bottomrule
\end{tabular}
\end{table}

\begin{table}[H]
\centering
\begin{minipage}{0.45\textwidth}
\centering
\caption{Consistency errors with $K=2$.}
\label{ODE1_K2_consistency}
\begin{tabular}{@{}r@{}r@{}r@{}r@{}rr@{}r@{}rr@{}}
\toprule
Ip & \phantom{aaa} & N & \phantom{aaa} &
\multicolumn{2}{@{}l@{}}{$\phi^{(3)}$} & \phantom{aaa} &
\multicolumn{2}{@{}l@{}}{$\phi^{(4)}$}\\
\cmidrule{5-6}\cmidrule{8-9}
&& && err & ord && err & ord\\
\midrule
\multirow{3}{*}{1} && 60 && 1.04E-07 & \emdash & & 7.50E-05 & \emdash\\
&& 120 && 1.31E-08 & 3.0 & & 1.88E-05 & 2.0\\
&& 240 && 1.64E-09 & 3.0 & & 4.71E-06 & 2.0\\
\midrule
\multirow{3}{*}{2} && 60 && 7.59E-15 & \emdash & & 8.20E-12 & \emdash\\
&& 120 && 1.20E-16 & 6.0 & & 2.58E-13 & 5.0\\
&& 240 && 1.87E-18 & 6.0 & & 8.10E-15 & 5.0\\
\midrule
\multirow{3}{*}{3} && 60 && 7.13E-22 & \emdash & & 9.42E-19 & \emdash\\
&& 120 && 1.41E-24 & 9.0 & & 3.72E-21 & 8.0\\
&& 240 && 2.77E-27 & 9.0 & & 1.46E-23 & 8.0\\
\midrule
\multirow{3}{*}{4} && 60 && 7.68E-29 & \emdash & & 1.15E-25 & \emdash\\
&& 120 && 1.90E-32 & 12.0 & & 5.71E-29 & 11.0\\
&& 240 && 4.68E-36 & 12.0 & & 2.81E-32 & 11.0\\
\midrule
\multirow{3}{*}{5} && 60 && 9.00E-36 & \emdash & & 1.48E-32 & \emdash\\
&& 120 && 2.80E-40 & 15.0 & & 9.21E-37 & 14.0\\
&& 240 && 8.63E-45 & 15.0 & & 5.68E-41 & 14.0\\
\bottomrule
\end{tabular}
\end{minipage}
\hfill
\begin{minipage}{0.45\textwidth}
\centering
\caption{Consistency errors with $K=3$.}
\label{ODE1_K3_consistency}
\begin{tabular}{@{}r@{}r@{}r@{}r@{}rr@{}}
\toprule
Ip & \phantom{aaa} & N & \phantom{aaa} &
\multicolumn{2}{@{}l@{}}{$\phi^{(4)}$}\\
\cmidrule{5-6}
&& && err & ord\\
\midrule
\multirow{3}{*}{1} && 60 && 1.24E-10 & \emdash\\
&& 120 && 7.77E-12 & 4.0\\
&& 240 && 4.87E-13 & 4.0\\
\midrule
\multirow{3}{*}{2} && 60 && 1.28E-20 & \emdash\\
&& 120 && 5.03E-23 & 8.0\\
&& 240 && 1.97E-25 & 8.0\\
\midrule
\multirow{3}{*}{3} && 60 && 1.81E-30 & \emdash\\
&& 120 && 4.48E-34 & 12.0\\
&& 240 && 1.10E-37 & 12.0\\
\midrule
\multirow{3}{*}{4} && 60 && 3.06E-40 & \emdash\\
&& 120 && 4.74E-45 & 16.0\\
&& 240 && 7.28E-50 & 16.0\\
\midrule
\multirow{3}{*}{5} && 60 && 5.73E-50 & \emdash\\
&& 120 && 5.57E-56 & 20.0\\
&& 240 && 5.37E-62 & 20.0\\
\bottomrule
\end{tabular}
\end{minipage}
\end{table}

The structural scheme $\SK{K}{R}$ has order $d=K(R+1)$, so $\phi^{(K)}_N$ approximates $\phi^{(K)}(t_N)$ to convergence order $O(\Delta t^d)$. However, approximating higher derivatives results in a loss of convergence order. Specifically, $\phi^{(K+1)}_N$ has order $d-1$, while $\phi^{(p)}_N$ for $p < d+K$ has order $d-(p-K)$.

\begin{rem}
Convergence cannot be achieved for $\phi^{(p)}_N$ if $d-(p-K)\leqslant 0$. Therefore, the explicit structural equations require $p<K(R+1)+K=K(R+2)$, such that convergence is controlled by the two parameters $K$ and $R$ along with the mandatory number of backward time steps $I_p\geqslant I_{p,min}=|(KR+p-1)/(K+1)|$.
\hfill\smallBB
\end{rem}

\section{Benchmarks}

A comprehensive benchmark is presented to assess the relevant properties of the structural schemes, namely, numerical accuracy, linear stability, spectral resolution, computational efficiency, and the ability to handle stiff problems. To assess the numerical accuracy of the structural schemes, the approximation error at time $t_{N}=N\Delta t=T$ is defined as
\[
e_N=\left|\phi^{(k)}_N-\phi^{(k)}(T)\right|,
\]
where $\phi^{(k)}_N$ is the numerical approximation of $\phi^{(k)}(t_N)$. Then, the convergence order for the approximation error of two numerical solutions $\phi^{(k)}_{N_1}$ and $\phi^{(k)}_{N_2}$ computed on two time grids of sizes $N_1$ and $N_2$, respectively, with $N_1\neq N_2$, is given as
\[
O(N_1,N_2)=\frac{\left|\log(e_{N_1}/e_{N_2}\right)|}{|\log(N_1/N_2)|}.
\]
\rb{
\begin{rem}
We would like to mention that the structural method has also been tested on Hamiltonian problems covering a broad range of situations, including the pendulum, the Euler system, the three-body problem, and the motion of a particle in a variable electromagnetic field~\cite{CFM25}.
\end{rem}
}
\rb{
\begin{rem}
A library dedicated to the numerical solution of ODEs using the structural method is available through an open-access GitHub repository~\cite{C26}. The codes were developed in {\tt Julia} and {\tt C++}, using the {\tt qd} library to handle quad- and octuple-precision arithmetic.
 \end{rem}
 }
\subsection{Linear scalar ODE}

The first sanity check benchmark concerns the linear ODE $\phi'(t)=-\lambda\phi(t)$ in $(0,1]$, with the initial condition $\phi(0)=1$, whose exact solution is $\phi(t)=\exp(-\lambda t)$, to assess several relevant issues that arise in the calculation of the approximations:
\begin{enumerate}
\item Verify whether the optimal accuracy, as predicted from the previous analysis, is obtained for this simple example. In particular, when dealing with an asymptotic convergence $C\Delta t^m$, the magnitude of the multiplicative constant $C$ for different structural schemes of the same convergence order is analysed. In that regard, the convergence orders of the structural schemes $\SK{K}{R}$ for the approximation of the solution and derivatives $\phi^{(k)}(t)$, $k\leqslant K$, at the final time, are reported in black, whereas an approximation of the derivatives $\phi^{(k)}(t)$ with $k>K$, obtained from post-processing structural equations, are reported in blue. Several possible combinations of the number of backward time steps $I_p$ are analysed.
\item Assess the impact of the tolerance parameter $\varepsilon$ for the fixed-point iterative process on the accuracy of the structural schemes. Indeed, if $\varepsilon$ is not sufficiently small, some accuracy loss is expected since the iterative process stops before reaching the optimal approximation. On the contrary, if $\varepsilon$ is too small, there is an unnecessary computational effort from the additional fixed-point iterations performed after the optimal accuracy has already been reached.
\item Assess the influence of the number of variables and the convergence order through the parameters $K$ and $R$ on the memory load. In particular, if the structural schemes are used for non-stationary PDE problems or large differential systems, the computations might become mostly memory-bound.
\item Confirm whether intermediate time grid point approximations in the last block achieve the same optimal convergence order as the final time point, for which the approximation errors $|\phi^{(k)}_{N-r}-\phi^{(k)}(T-r\Delta t)|$ for $r\bint{1,R-1}$, are computed.
\end{enumerate}

\subsubsection{Convergence order}

Considering $\lambda=1$, we check the convergence order of the structural schemes $\SK{1}{R}$ with $R\bint{1,5}$. The errors and convergence orders are reported in Table~\ref{ODE1_K1_0_v1}, where the second, third, and fourth derivatives were obtained from the post-processing structural equations using $I_p=I_{p,\min}$. Super-convergence is observed for $R=2,4$ (where the third and fifth orders were expected, respectively), although the post-calculated derivatives present the nominal convergence order, which decreases as the derivative order increases, as expected.

\begin{rem}
Post-processing has been performed with higher values for $I_p$, namely $I_p=I_{p,\min}+1$, $I_p=I_{p,\min}+2$, and $I_p=I_{p,\min}+3$, which are not reported since comparable convergence orders to the case with $I_p=I_{p,\min}$ were obtained. Nevertheless, it is observed that the approximation errors increase with $I_p$ due to larger stencils. Thus, the optimal choice remains $I_p=I_{p,\min}$.
\hfill\smallBB
\end{rem}

\begin{table}[H]
\centering
\caption{Errors and convergence orders for the linear scalar ODE benchmark with $K=1$, $I_p=I_{p,\min}$, and $\varepsilon=10^{-20}$.}
\label{ODE1_K1_0_v1}
\begin{tabular}{@{}r@{}r@{}r@{}r@{}rr@{}r@{}rr@{}r@{}rr@{}r@{}rr@{}r@{}rr@{}}
\toprule
R & \phantom{aa} & N & \phantom{aa} &
\multicolumn{2}{@{}l@{}}{$\phi$} & \phantom{aa} &
\multicolumn{2}{@{}l@{}}{$\phi^{(1)}$} & \phantom{aa} &
\multicolumn{2}{@{}l@{}}{$\phi^{(2)}$} & \phantom{aa} &
\multicolumn{2}{@{}l@{}}{$\phi^{(3)}$} & \phantom{aa} &
\multicolumn{2}{@{}l@{}}{$\phi^{(4)}$}\\
\cmidrule{5-6}\cmidrule{8-9}\cmidrule{11-12}\cmidrule{14-15}\cmidrule{17-18}
&& && err & ord && err & ord && err & ord && err & ord && err & ord\\
\midrule
\multirow{4}{*}{1} && && && && && \multicolumn{2}{r@{}}{\textcolor{blue}{Ip=1}} && \multicolumn{2}{r@{}}{\textcolor{blue}{Ip=2}} && \multicolumn{2}{r@{}}{\textcolor{blue}{Ip=2}}\\
&& 60 && \textcolor{black}{8.52E-06} & \textcolor{black}{\emdash} && \textcolor{black}{8.52E-06} & \textcolor{black}{\emdash} && \textcolor{blue}{3.08E-03} & \textcolor{blue}{\emdash} && \textcolor{blue}{2.33E+00} & \textcolor{blue}{\emdash} && \textcolor{blue}{3.36E+02} & \textcolor{blue}{\emdash}\\
&& 120 && \textcolor{black}{2.13E-06} & \textcolor{black}{2.0} && \textcolor{black}{2.13E-06} & \textcolor{black}{2.0} && \textcolor{blue}{1.54E-03} & \textcolor{blue}{1.0} && \textcolor{blue}{2.32E+00} & \textcolor{blue}{0.0} && \textcolor{blue}{6.67E+02} & \textcolor{blue}{\emdash}\\
&& 240 && \textcolor{black}{5.32E-07} & \textcolor{black}{2.0} && \textcolor{black}{5.32E-07} & \textcolor{black}{2.0} && \textcolor{blue}{7.67E-04} & \textcolor{blue}{1.0} && \textcolor{blue}{2.31E+00} & \textcolor{blue}{0.0} && \textcolor{blue}{1.33E+03} & \textcolor{blue}{\emdash}\\
\midrule
\multirow{4}{*}{2} && && && && && \multicolumn{2}{r@{}}{\textcolor{blue}{Ip=2}} && \multicolumn{2}{r@{}}{\textcolor{blue}{Ip=3}} && \multicolumn{2}{r@{}}{\textcolor{blue}{Ip=3}}\\
&& 60 && \textcolor{black}{6.31E-10} & \textcolor{black}{\emdash} && \textcolor{black}{6.31E-10} & \textcolor{black}{\emdash} && \textcolor{blue}{3.46E-05} & \textcolor{blue}{\emdash} && \textcolor{blue}{6.96E-03} & \textcolor{blue}{\emdash} && \textcolor{blue}{3.47E+00} & \textcolor{blue}{\emdash}\\
&& 120 && \textcolor{black}{3.94E-11} & \textcolor{black}{4.0} && \textcolor{black}{3.94E-11} & \textcolor{black}{4.0} && \textcolor{blue}{8.59E-06} & \textcolor{blue}{2.0} && \textcolor{blue}{3.50E-03} & \textcolor{blue}{1.0} && \textcolor{blue}{3.47E+00} & \textcolor{blue}{\emdash}\\
&& 240 && \textcolor{black}{2.46E-12} & \textcolor{black}{4.0} && \textcolor{black}{2.46E-12} & \textcolor{black}{4.0} && \textcolor{blue}{2.14E-06} & \textcolor{blue}{2.0} && \textcolor{blue}{1.76E-03} & \textcolor{blue}{1.0} && \textcolor{blue}{3.47E+00} & \textcolor{blue}{\emdash}\\
\midrule
\multirow{4}{*}{3} && && && && && \multicolumn{2}{r@{}}{\textcolor{blue}{Ip=2}} && \multicolumn{2}{r@{}}{\textcolor{blue}{Ip=3}} && \multicolumn{2}{r@{}}{\textcolor{blue}{Ip=3}}\\
&& 60 && \textcolor{black}{3.55E-10} & \textcolor{black}{\emdash} && \textcolor{black}{3.55E-10} & \textcolor{black}{\emdash} && \textcolor{blue}{4.36E-07} & \textcolor{blue}{\emdash} && \textcolor{blue}{9.56E-05} & \textcolor{blue}{\emdash} && \textcolor{blue}{9.33E-03} & \textcolor{blue}{\emdash}\\
&& 120 && \textcolor{black}{2.22E-11} & \textcolor{black}{4.0} && \textcolor{black}{2.22E-11} & \textcolor{black}{4.0} && \textcolor{blue}{5.39E-08} & \textcolor{blue}{3.0} && \textcolor{blue}{2.37E-05} & \textcolor{blue}{2.0} && \textcolor{blue}{4.63E-03} & \textcolor{blue}{1.0}\\
&& 240 && \textcolor{black}{1.39E-12} & \textcolor{black}{4.0} && \textcolor{black}{1.39E-12} & \textcolor{black}{4.0} && \textcolor{blue}{6.69E-09} & \textcolor{blue}{3.0} && \textcolor{blue}{5.88E-06} & \textcolor{blue}{2.0} && \textcolor{blue}{2.31E-03} & \textcolor{blue}{1.0}\\
\midrule
\multirow{4}{*}{4} && && && && && \multicolumn{2}{r@{}}{\textcolor{blue}{Ip=3}} && \multicolumn{2}{r@{}}{\textcolor{blue}{Ip=4}} && \multicolumn{2}{r@{}}{\textcolor{blue}{Ip=4}}\\
&& 60 && \textcolor{black}{1.00E-13} & \textcolor{black}{\emdash} && \textcolor{black}{1.00E-13} & \textcolor{black}{\emdash} && \textcolor{blue}{5.87E-09} & \textcolor{blue}{\emdash} && \textcolor{blue}{1.46E-06} & \textcolor{blue}{\emdash} && \textcolor{blue}{1.83E-04} & \textcolor{blue}{\emdash}\\
&& 120 && \textcolor{black}{1.56E-15} & \textcolor{black}{6.0} && \textcolor{black}{1.56E-15} & \textcolor{black}{6.0} && \textcolor{blue}{3.61E-10} & \textcolor{blue}{4.0} && \textcolor{blue}{1.80E-07} & \textcolor{blue}{3.0} && \textcolor{blue}{4.53E-05} & \textcolor{blue}{2.0}\\
&& 240 && \textcolor{black}{2.44E-17} & \textcolor{black}{6.0} && \textcolor{black}{2.44E-17} & \textcolor{black}{6.0} && \textcolor{blue}{2.24E-11} & \textcolor{blue}{4.0} && \textcolor{blue}{2.23E-08} & \textcolor{blue}{3.0} && \textcolor{blue}{1.12E-05} & \textcolor{blue}{2.0}\\
\midrule
\multirow{4}{*}{5} && && && && && \multicolumn{2}{r@{}}{\textcolor{blue}{Ip=3}} && \multicolumn{2}{r@{}}{\textcolor{blue}{Ip=4}} && \multicolumn{2}{r@{}}{\textcolor{blue}{Ip=4}}\\
&& 60 && \textcolor{black}{3.59E-14} & \textcolor{black}{\emdash} && \textcolor{black}{3.59E-14} & \textcolor{black}{\emdash} && \textcolor{blue}{8.22E-11} & \textcolor{blue}{\emdash} && \textcolor{blue}{2.24E-08} & \textcolor{blue}{\emdash} && \textcolor{blue}{3.31E-06} & \textcolor{blue}{\emdash}\\
&& 120 && \textcolor{black}{5.60E-16} & \textcolor{black}{6.0} && \textcolor{black}{5.60E-16} & \textcolor{black}{6.0} && \textcolor{blue}{2.52E-12} & \textcolor{blue}{5.0} && \textcolor{blue}{1.38E-09} & \textcolor{blue}{4.0} && \textcolor{blue}{4.06E-07} & \textcolor{blue}{3.0}\\
&& 240 && \textcolor{black}{8.75E-18} & \textcolor{black}{6.0} && \textcolor{black}{8.75E-18} & \textcolor{black}{6.0} && \textcolor{blue}{7.78E-14} & \textcolor{blue}{5.0} && \textcolor{blue}{8.52E-11} & \textcolor{blue}{4.0} && \textcolor{blue}{5.03E-08} & \textcolor{blue}{3.0}\\
\bottomrule
\end{tabular}
\end{table}

\subsubsection{Sensitivity to the tolerance $\varepsilon$}

The solution of a block of size $R$ is obtained from an iterative procedure described in Section~\ref{sec::numerical_solver} and controlled by a tolerance $\varepsilon$. We denote by $\bar\kappa$ the average number of iterations to solve an $R$-block. Therefore, for a structural scheme $\SK{K}{R}$, the physical equations are evaluated $KR\bar\kappa$ times to solve a block, thus $KN\bar\kappa$ is the total number of evaluations for $N$ steps. We then denote by $\bar\tau=K\bar\kappa$ the average number of evaluations per time step.

To assess the computational effort, the values of $\bar\kappa$ and $\bar \tau$ for two values of $\varepsilon$ are reported in Tables~\ref{ODE1_N60_1},~\ref{ODE1_N60_2}, and~\ref{ODE1_N60_3}. The first table concerns structural schemes with convergence orders of $2$, $4$, and $6$, where $\varepsilon$ is chosen based on the approximation errors. The second table concerns the convergence orders of $8$, $10$, and $12$, for which smaller values of $\varepsilon$ are used. The last table concerns the highest convergence orders with a substantial reduction in the values of $\varepsilon$. As expected, the number of iterations increases when $\varepsilon$ decreases. Quantifying the additional iterations is not easy, as there is significant variation between cases, ranging from half to twice the number of iterations, even when $\varepsilon$ varies by 10 or 15 orders of magnitude. In conclusion, the computational effort is not substantially affected by the strong reduction in the tolerance criterion, and for the remainder of the section, we use, by default, $\varepsilon=10^{-20}$ for $K=1$, $\varepsilon=10^{-40}$ for $K=2$, and $\varepsilon=10^{-60}$ for $K=3$.

\begin{table}[H]
\centering
\caption{Average number of fixed-point iterations for the linear scalar ODE benchmark with $N=60$ for structural schemes with convergence orders of 2, 4, and 6.}
\label{ODE1_N60_1}
\begin{tabular}{@{}lr@{}rrrrr@{}r@{}rr@{}r@{}r@{}}
\toprule
K & \phantom{aa} & \multicolumn{5}{c}{1} & \phantom{aa} & \multicolumn{2}{c}{2} & \phantom{aa} & \multicolumn{1}{c}{3}\\\cmidrule{1-1}\cmidrule{3-7}\cmidrule{9-10}\cmidrule{12-12}
R (order) && 1 (2) & 2 (4) & 3 (4) & 4 (6) & 5 (6) && 1 (4) & 2 (6) && 1 (6)\\
\midrule
$\bar\kappa$ with $\varepsilon$=1E-20 && 9.00 & 5.00 & 3.33 & 2.50 & 2.20 && 8.00 & 4.50 && 7.00\\
$\bar\tau$ with $\varepsilon$=1E-20 && 9.00 & 5.00 & 3.33 & 2.50 & 2.20 && 16.00 & 4.50 && 21.00\\
\midrule
$\bar\kappa$ with $\varepsilon$=1E-30 && 14.00 & 7.50 & 5.00 & 4.00 & 3.20 && 13.00 & 7.00 && 12.00\\
$\bar\tau$ with $\varepsilon$=1E-30 && 14.00 & 7.50 & 5.00 & 4.00 & 3.20 && 26.00 & 14.00 && 36.00\\
\bottomrule
\end{tabular}
\end{table}

\begin{table}[H]
\centering
\caption{Average number of fixed-point iterations for the linear scalar ODE benchmark with $N=60$ for structural schemes with convergence orders of 8, 10, and 12.}
\label{ODE1_N60_2}
\begin{tabular}{@{}lr@{}rrr@{}r@{}rr@{}r@{}rr@{}}
\toprule
K &\phantom{aa} &\multicolumn{3}{c}{2} &\phantom{aa} &\multicolumn{2}{c}{3} &\phantom{aa} &\multicolumn{2}{c}{4}\\\cmidrule{1-1}\cmidrule{3-5}\cmidrule{7-8}\cmidrule{10-11}
R (order) && 3 (8) & 4 (10) & 5 (12) && 2 (10) & 3 (12) && 1 (10) & 2 (12)\\
\midrule
$\bar\kappa$ with $\varepsilon$=1E-25 && 4.17 & 3.47 & 3.00 && 4.97 & 3.33 && 8.00 & 4.50\\
$\bar\tau$ with $\varepsilon$=1E-25 && 8.34 & 6.94 & 6.00 && 14.91 & 9.99 && 32.00 & 18.00\\
\midrule
$\bar\kappa$ with $\varepsilon$=1E-40 && 7.00 & 5.75 & 5.00 && 8.50 & 6.00 && 15.00 & 8.45\\
$\bar\tau$ with $\varepsilon$=1E-40 && 14.00 & 11.50 & 10.00 && 25.50 & 18.00 && 60.00 & 33.80\\
\bottomrule
\end{tabular}
\end{table}

\begin{table}[H]
\centering
\caption{Average number of fixed-point iterations for the linear scalar ODE benchmark with $N=60$ for structural schemes with convergence orders of 16, 18, 20, and 24.}
\label{ODE1_N60_3}
\begin{tabular}{@{}lr@{}rr@{}r@{}rrr@{}}
\toprule
K &\phantom{aa} &\multicolumn{2}{c}{3} &\phantom{aa} &\multicolumn{3}{c}{4}\\\cmidrule{1-1}\cmidrule{3-4}\cmidrule{6-8}
R (order) && 4 (16) & 5 (18) && 3 (16) & 4 (20) & 5 (24)\\
\midrule
$\bar\kappa$ with $\varepsilon$=1E-40 && 4.93 & 4.40 && 6.12 & 5.35 & 5.40\\
$\bar\tau$ with $\varepsilon$=1E-40 && 14.79 & 13.20 && 24.48 & 21.40 & 21.60\\
\midrule
$\bar\kappa$ with $\varepsilon$=1E-60 && 7.75 & 6.83 && 10.00 & 8.75 & 8.98\\
$\bar\tau$ with $\varepsilon$=1E-60 && 23.25 & 20.49 && 40.00 & 35.00 & 35.92\\
\bottomrule
\end{tabular}
\end{table}

\subsubsection{Memory load}

The number of variables to solve in a block of the structural scheme $\SK{K}{R}$ is $M=(R+1)(K+1)$, while the convergence order is $O=(R+1)K$, from which we deduce the simple relation $M=(K+1)/K\,O$ between the memory storage and the convergence order. Hence, for a vector system of dimension $L$, for instance, a finite difference semi-discretisation in space with $L$ nodes, the number of variables to compute is $M=L(R+1)(K+1)$. As an example, a sixth-order accurate method in time with $K=2$ and $R=1$ requires $M=6L$ variables.

\subsubsection{Convergence order of the intermediate steps}

An important property of the structural scheme is to guarantee the same convergence order at all time steps, including the intermediate time grid points within a block, that is, at $t_{n+1}$ up to $t_{n+R-1}$. We check this property for the simple ODE $\phi'(t)=-\phi(t)$ for two representative situations. First, we consider the structural scheme $\SK{2}{2}$ and check that the variables at the middle time step $t_{n+1}$ have the same convergence order as at $t_{n+2}$. To this end, the approximation errors $\phi^{(k)}_{N-1}-\phi^{(k)}(T-\Delta t)$ and $\phi^{(k)}_{N}-\phi^{(k)}(T)$ for $k\bint{0,2}$, and the corresponding convergence orders, are reported in Table~\ref{ODE1_SemPre_intermediate_steps_K=2_R=2_intermediate_steps}. The results confirm that the approximations at the intermediate time grid points have the same convergence order as those at the final time.

\begin{table}[H]
\centering
\caption{Errors and convergence orders for the linear scalar ODE benchmark at the last $R$ time steps (last block) with $K=2$ and $R=2$.}
\label{ODE1_SemPre_intermediate_steps_K=2_R=2_intermediate_steps}
\begin{tabular}{@{}r@{}r@{}r@{}r@{}rr@{}r@{}rr@{}r@{}rr@{}}
\toprule
t & \phantom{aaa} & N & \phantom{aaa} &
\multicolumn{2}{@{}l@{}}{$\phi$} & \phantom{aaa} &
\multicolumn{2}{@{}l@{}}{$\phi^{(1)}$} & \phantom{aaa} &
\multicolumn{2}{@{}l@{}}{$\phi^{(2)}$}\\
\cmidrule{5-6}\cmidrule{8-9}\cmidrule{11-12}
&& && err & ord && err & ord && err & ord\\
\midrule
\multirow{3}{*}{$T-1\Delta t$} && 60 && 8.34E-16 & --- && 8.34E-16 & --- && 8.34E-16 & ---\\
&& 120 && 1.30E-17 & 6.0 && 1.30E-17 & 6.0 && 1.30E-17 & 6.0\\
&& 240 && 2.04E-19 & 6.0 && 2.04E-19 & 6.0 && 2.04E-19 & 6.0\\
\midrule
\multirow{3}{*}{$T-0\Delta t$} && 60 && 8.34E-16 & --- && 8.34E-16 & --- && 8.34E-16 & ---\\
&& 120 && 1.30E-17 & 6.0 && 1.30E-17 & 6.0 && 1.30E-17 & 6.0\\
&& 240 && 2.04E-19 & 6.0 && 2.04E-19 & 6.0 && 2.04E-19 & 6.0\\
\bottomrule
\end{tabular}
\end{table}

We repeat the experience with a more challenging situation, considering a 24th-order of convergence with the structural scheme $\SK{4}{5}$ and $\varepsilon=10^{-80}$, and check the approximation errors and convergence orders for all intermediate time steps, that is, $\phi^{(k)}_{N-r}-\phi^{(k)}(T-r\Delta t)$, $r\bint{0,R-1}$, which are reported in Table~\ref{ODE1_SemPre_intermediate_steps_K=4_R=5_intermediate_steps}. As observed, all intermediate time steps present the same optimal convergence order. Thus, we conclude that $\phi^{(k)}_{N-r}$ are not auxiliary variables but genuine approximations that can be used in complex models involving the coupling of differential equations, non-linearity, or boundary conditions if the system derives from a semi-discretisation in space.

\begin{table}[H]
\centering
\caption{Errors and convergence orders for the linear scalar ODE benchmark at the last $R$ time steps (last block) with $K=4$ and $R=5$.}
\label{ODE1_SemPre_intermediate_steps_K=4_R=5_intermediate_steps}
\begin{tabular}{@{}r@{}r@{}r@{}r@{}rr@{}r@{}rr@{}r@{}rr@{}r@{}rr@{}r@{}rr@{}}
\toprule
t & \phantom{aaa} & N & \phantom{aaa} &
\multicolumn{2}{@{}l@{}}{$\phi$} & \phantom{aaa} &
\multicolumn{2}{@{}l@{}}{$\phi^{(1)}$} & \phantom{aaa} &
\multicolumn{2}{@{}l@{}}{$\phi^{(2)}$} & \phantom{aaa} &
\multicolumn{2}{@{}l@{}}{$\phi^{(3)}$} & \phantom{aaa} &
\multicolumn{2}{@{}l@{}}{$\phi^{(4)}$}\\
\cmidrule{5-6}\cmidrule{8-9}\cmidrule{11-12}\cmidrule{14-15}\cmidrule{17-18}
&& && err & ord && err & ord && err & ord && err & ord && err & ord\\
\midrule
\multirow{3}{*}{$T-4\Delta t$} && 60 && 1.47E-63 & --- && 1.47E-63 & --- && 1.47E-63 & --- && 1.47E-63 & --- && 1.47E-63 & ---\\
&& 120 && 8.67E-71 & 24.0 && 8.67E-71 & 24.0 && 8.67E-71 & 24.0 && 8.67E-71 & 24.0 && 8.67E-71 & 24.0\\
&& 240 && 5.14E-78 & 24.0 && 5.14E-78 & 24.0 && 5.14E-78 & 24.0 && 5.14E-78 & 24.0 && 5.14E-78 & 24.0\\
\midrule
\multirow{3}{*}{$T-3\Delta t$} && 60 && 1.45E-63 & --- && 1.45E-63 & --- && 1.45E-63 & --- && 1.45E-63 & --- && 1.45E-63 & ---\\
&& 120 && 8.60E-71 & 24.0 && 8.60E-71 & 24.0 && 8.60E-71 & 24.0 && 8.60E-71 & 24.0 && 8.60E-71 & 24.0\\
&& 240 && 5.12E-78 & 24.0 && 5.12E-78 & 24.0 && 5.12E-78 & 24.0 && 5.12E-78 & 24.0 && 5.12E-78 & 24.0\\
\midrule
\multirow{3}{*}{$T-2\Delta t$} && 60 && 1.42E-63 & --- && 1.42E-63 & --- && 1.42E-63 & --- && 1.42E-63 & --- && 1.42E-63 & ---\\
&& 120 && 8.53E-71 & 24.0 && 8.53E-71 & 24.0 && 8.53E-71 & 24.0 && 8.53E-71 & 24.0 && 8.53E-71 & 24.0\\
&& 240 && 5.10E-78 & 24.0 && 5.10E-78 & 24.0 && 5.10E-78 & 24.0 && 5.10E-78 & 24.0 && 5.10E-78 & 24.0\\
\midrule
\multirow{3}{*}{$T-1\Delta t$} && 60 && 1.40E-63 & --- && 1.40E-63 & --- && 1.40E-63 & --- && 1.40E-63 & --- && 1.40E-63 & ---\\
&& 120 && 8.46E-71 & 24.0 && 8.46E-71 & 24.0 && 8.46E-71 & 24.0 && 8.46E-71 & 24.0 && 8.46E-71 & 24.0\\
&& 240 && 5.08E-78 & 24.0 && 5.08E-78 & 24.0 && 5.08E-78 & 24.0 && 5.08E-78 & 24.0 && 5.08E-78 & 24.0\\
\midrule
\multirow{3}{*}{$T-0\Delta t$} && 60 && 1.44E-63 & --- && 1.44E-63 & --- && 1.44E-63 & --- && 1.44E-63 & --- && 1.44E-63 & ---\\
&& 120 && 8.57E-71 & 24.0 && 8.57E-71 & 24.0 && 8.57E-71 & 24.0 && 8.57E-71 & 24.0 && 8.57E-71 & 24.0\\
&& 240 && 5.11E-78 & 24.0 && 5.11E-78 & 24.0 && 5.11E-78 & 24.0 && 5.11E-78 & 24.0 && 5.11E-78 & 24.0\\
\bottomrule
\end{tabular}
\end{table}

\subsection{Non-linear scalar ODE}

Non-linear scalar ODEs with stiff and sensitive solutions are addressed in this section to assess the ability of the structural schemes to provide accurate numerical approximations. Non-linearity may also lead to blow-up in finite time, and the performance of the scheme depends on its ability to capture the solution very close to the final time.

\subsubsection{High sensitivity to the initial condition}

Consider the non-linear scalar ODE $\phi'(t)=\phi(t)(1-\phi(t))$ in $(-10,0]$ with the initial condition $\phi(-10)=1/(1+\exp(10))$. The solution is the sigmoid function $\displaystyle \phi(t)=\sigma(t)=1/(1+\exp(-t))$, which presents two horizontal asymptotes at $-\infty$ and $+\infty$. The main issue is that small perturbations around the initial condition, for instance, a low-accuracy scheme, can lead to large deviations in the numerical approximation at $t=0$. To assess the ability of the structural schemes to handle such a stiff initial condition, we run the benchmark for different values of $K$ and $R$ and report in Table~\ref{ODE3b_merge_K1K2K3_RN} the errors and convergence orders.

\begin{table}[H]
\centering
\caption{Errors and convergence orders for the non-linear scalar ODE benchmark with high sensitivity to the initial condition.}
\label{ODE3b_merge_K1K2K3_RN}
\begin{tabular}{@{}r@{}r@{}r@{}r@{}rr@{}r@{}rr@{}r@{}rr@{}r@{}rr@{}r@{}rr@{}r@{}rr@{}}
\toprule
R & \phantom{aa} & N & \phantom{aa} &
\multicolumn{8}{c}{K=1} & \phantom{aa} &
\multicolumn{5}{c}{K=2} & \phantom{aa} &
\multicolumn{2}{c}{K=3}\\
\cmidrule{5-12}\cmidrule{14-18}\cmidrule{20-21}
&&&& \multicolumn{2}{@{}l@{}}{$\phi$} & \phantom{aa} &
\multicolumn{2}{@{}l@{}}{$\phi^{(2)}$} & \phantom{aa} &
\multicolumn{2}{@{}l@{}}{$\phi^{(3)}$} &
& \multicolumn{2}{@{}l@{}}{$\phi$} & \phantom{aa} &
\multicolumn{2}{@{}l@{}}{$\phi^{(3)}$} &
& \multicolumn{2}{@{}l@{}}{$\phi$}\\
\cmidrule{5-6}\cmidrule{8-9}\cmidrule{11-12}\cmidrule{14-15}\cmidrule{17-18}\cmidrule{20-21}
&&&& err & ord & & err & ord & & err & ord & & err & ord & & err & ord & & err & ord\\
\midrule
\multirow{4}{*}{1} & & & &
& & & \multicolumn{2}{r@{}}{\textcolor{blue}{Ip=1}} & & \multicolumn{2}{r@{}}{\textcolor{blue}{Ip=2}}
& & & & & \multicolumn{2}{r@{}}{\textcolor{blue}{Ip=2}}
& & &\\
& & 120 & &
\textcolor{black}{1.01E-03} & \textcolor{black}{\emdash} & &
\textcolor{blue}{4.69E-03} & \textcolor{blue}{\emdash} & &
\textcolor{blue}{7.76E-01} & \textcolor{blue}{\emdash} & &
\textcolor{black}{8.38E-08} & \textcolor{black}{\emdash} & &
\textcolor{blue}{2.55E-04} & \textcolor{blue}{\emdash} & &
\textcolor{black}{2.50E-12} & \textcolor{black}{\emdash}\\
& & 240 & &
\textcolor{black}{2.53E-04} & \textcolor{black}{2.0} & &
\textcolor{blue}{2.48E-03} & \textcolor{blue}{0.9} & &
\textcolor{blue}{7.80E-01} & \textcolor{blue}{\emdash} & &
\textcolor{black}{5.23E-09} & \textcolor{black}{4.0} & &
\textcolor{blue}{6.34E-05} & \textcolor{blue}{2.0} & &
\textcolor{black}{3.90E-14} & \textcolor{black}{6.0}\\
& & 480 & &
\textcolor{black}{6.33E-05} & \textcolor{black}{2.0} & &
\textcolor{blue}{1.27E-03} & \textcolor{blue}{1.0} & &
\textcolor{blue}{7.81E-01} & \textcolor{blue}{\emdash} & &
\textcolor{black}{3.27E-10} & \textcolor{black}{4.0} & &
\textcolor{blue}{1.58E-05} & \textcolor{blue}{2.0} & &
\textcolor{black}{6.10E-16} & \textcolor{black}{6.0}\\
& & 960 & &
\textcolor{black}{1.58E-05} & \textcolor{black}{2.0} & &
\textcolor{blue}{6.43E-04} & \textcolor{blue}{1.0} & &
\textcolor{blue}{7.81E-01} & \textcolor{blue}{\emdash} & &
\textcolor{black}{2.04E-11} & \textcolor{black}{4.0} & &
\textcolor{blue}{3.96E-06} & \textcolor{blue}{2.0} & &
\textcolor{black}{9.52E-18} & \textcolor{black}{6.0}\\
\midrule
\multirow{4}{*}{2} & & & &
& & & \multicolumn{2}{r@{}}{\textcolor{blue}{Ip=2}} & & \multicolumn{2}{r@{}}{\textcolor{blue}{Ip=3}}
& & & & & \multicolumn{2}{r@{}}{\textcolor{blue}{Ip=2}}
& & &\\
& & 120 & &
\textcolor{black}{1.68E-06} & \textcolor{black}{\emdash} & &
\textcolor{blue}{3.68E-05} & \textcolor{blue}{\emdash} & &
\textcolor{blue}{9.45E-04} & \textcolor{blue}{\emdash} & &
\textcolor{black}{2.67E-11} & \textcolor{black}{\emdash} & &
\textcolor{blue}{5.57E-07} & \textcolor{blue}{\emdash} & &
\textcolor{black}{2.18E-20} & \textcolor{black}{\emdash}\\
& & 240 & &
\textcolor{black}{1.05E-07} & \textcolor{black}{4.0} & &
\textcolor{blue}{4.57E-06} & \textcolor{blue}{3.0} & &
\textcolor{blue}{2.01E-04} & \textcolor{blue}{2.2} & &
\textcolor{black}{4.16E-13} & \textcolor{black}{6.0} & &
\textcolor{blue}{3.54E-08} & \textcolor{blue}{4.0} & &
\textcolor{black}{2.13E-23} & \textcolor{black}{10.0}\\
& & 480 & &
\textcolor{black}{6.54E-09} & \textcolor{black}{4.0} & &
\textcolor{blue}{5.68E-07} & \textcolor{blue}{3.0} & &
\textcolor{blue}{4.80E-05} & \textcolor{blue}{2.1} & &
\textcolor{black}{6.50E-15} & \textcolor{black}{6.0} & &
\textcolor{blue}{2.22E-09} & \textcolor{blue}{4.0} & &
\textcolor{black}{2.08E-26} & \textcolor{black}{10.0}\\
& & 960 & &
\textcolor{black}{4.09E-10} & \textcolor{black}{4.0} & &
\textcolor{blue}{7.08E-08} & \textcolor{blue}{3.0} & &
\textcolor{blue}{1.19E-05} & \textcolor{blue}{2.0} & &
\textcolor{black}{1.02E-16} & \textcolor{black}{6.0} & &
\textcolor{blue}{1.39E-10} & \textcolor{blue}{4.0} & &
\textcolor{black}{2.04E-29} & \textcolor{black}{10.0}\\
\midrule
\multirow{4}{*}{3} & & & &
& & & \multicolumn{2}{r@{}}{\textcolor{blue}{Ip=2}} & & \multicolumn{2}{r@{}}{\textcolor{blue}{Ip=3}}
& & & & & \multicolumn{2}{r@{}}{\textcolor{blue}{Ip=3}}
& & &\\
& & 120 & &
\textcolor{black}{7.59E-07} & \textcolor{black}{\emdash} & &
\textcolor{blue}{3.56E-05} & \textcolor{blue}{\emdash} & &
\textcolor{blue}{1.56E-03} & \textcolor{blue}{\emdash} & &
\textcolor{black}{5.71E-15} & \textcolor{black}{\emdash} & &
\textcolor{blue}{4.30E-09} & \textcolor{blue}{\emdash} & &
\textcolor{black}{3.94E-23} & \textcolor{black}{\emdash}\\
& & 240 & &
\textcolor{black}{4.72E-08} & \textcolor{black}{4.0} & &
\textcolor{blue}{4.52E-06} & \textcolor{blue}{3.0} & &
\textcolor{blue}{3.96E-04} & \textcolor{blue}{2.0} & &
\textcolor{black}{2.23E-17} & \textcolor{black}{8.0} & &
\textcolor{blue}{7.11E-11} & \textcolor{blue}{5.9} & &
\textcolor{black}{9.59E-27} & \textcolor{black}{12.0}\\
& & 480 & &
\textcolor{black}{2.95E-09} & \textcolor{black}{4.0} & &
\textcolor{blue}{5.66E-07} & \textcolor{blue}{3.0} & &
\textcolor{blue}{9.93E-05} & \textcolor{blue}{2.0} & &
\textcolor{black}{8.69E-20} & \textcolor{black}{8.0} & &
\textcolor{blue}{1.13E-12} & \textcolor{blue}{6.0} & &
\textcolor{black}{2.34E-30} & \textcolor{black}{12.0}\\
& & 960 & &
\textcolor{black}{1.84E-10} & \textcolor{black}{4.0} & &
\textcolor{blue}{7.07E-08} & \textcolor{blue}{3.0} & &
\textcolor{blue}{2.49E-05} & \textcolor{blue}{2.0} & &
\textcolor{black}{3.39E-22} & \textcolor{black}{8.0} & &
\textcolor{blue}{1.77E-14} & \textcolor{blue}{6.0} & &
\textcolor{black}{5.71E-34} & \textcolor{black}{12.0}\\
\midrule
\multirow{4}{*}{4} & & & &
& & & \multicolumn{2}{r@{}}{\textcolor{blue}{Ip=3}} & & \multicolumn{2}{r@{}}{\textcolor{blue}{Ip=4}}
& & & & & \multicolumn{2}{r@{}}{\textcolor{blue}{Ip=4}}
& & &\\
& & 120 & &
\textcolor{black}{4.45E-09} & \textcolor{black}{\emdash} & &
\textcolor{blue}{1.37E-06} & \textcolor{blue}{\emdash} & &
\textcolor{blue}{6.57E-05} & \textcolor{blue}{\emdash} & &
\textcolor{black}{3.76E-18} & \textcolor{black}{\emdash} & &
\textcolor{blue}{5.22E-11} & \textcolor{blue}{\emdash} & &
\textcolor{black}{5.08E-29} & \textcolor{black}{\emdash}\\
& & 240 & &
\textcolor{black}{6.93E-11} & \textcolor{black}{6.0} & &
\textcolor{blue}{4.41E-08} & \textcolor{blue}{5.0} & &
\textcolor{blue}{4.20E-06} & \textcolor{blue}{4.0} & &
\textcolor{black}{3.65E-21} & \textcolor{black}{10.0} & &
\textcolor{blue}{2.37E-13} & \textcolor{blue}{7.8} & &
\textcolor{black}{7.72E-34} & \textcolor{black}{16.0}\\
& & 480 & &
\textcolor{black}{1.08E-12} & \textcolor{black}{6.0} & &
\textcolor{blue}{1.39E-09} & \textcolor{blue}{5.0} & &
\textcolor{blue}{2.64E-07} & \textcolor{blue}{4.0} & &
\textcolor{black}{3.56E-24} & \textcolor{black}{10.0} & &
\textcolor{blue}{9.61E-16} & \textcolor{blue}{7.9} & &
\textcolor{black}{1.18E-38} & \textcolor{black}{16.0}\\
& & 960 & &
\textcolor{black}{1.69E-14} & \textcolor{black}{6.0} & &
\textcolor{blue}{4.34E-11} & \textcolor{blue}{5.0} & &
\textcolor{blue}{1.65E-08} & \textcolor{blue}{4.0} & &
\textcolor{black}{3.47E-27} & \textcolor{black}{10.0} & &
\textcolor{blue}{3.79E-18} & \textcolor{blue}{8.0} & &
\textcolor{black}{1.80E-43} & \textcolor{black}{16.0}\\
\midrule
\multirow{4}{*}{5} & & & &
& & & \multicolumn{2}{r@{}}{\textcolor{blue}{Ip=3}} & & \multicolumn{2}{r@{}}{\textcolor{blue}{Ip=4}}
& & & & & \multicolumn{2}{r@{}}{\textcolor{blue}{Ip=4}}
& & &\\
& & 120 & &
\textcolor{black}{1.17E-09} & \textcolor{black}{\emdash} & &
\textcolor{blue}{6.24E-07} & \textcolor{blue}{\emdash} & &
\textcolor{blue}{3.44E-05} & \textcolor{blue}{\emdash} & &
\textcolor{black}{9.40E-21} & \textcolor{black}{\emdash} & &
\textcolor{blue}{8.34E-13} & \textcolor{blue}{\emdash} & &
\textcolor{black}{5.54E-31} & \textcolor{black}{\emdash}\\
& & 240 & &
\textcolor{black}{1.80E-11} & \textcolor{black}{6.0} & &
\textcolor{blue}{2.15E-08} & \textcolor{blue}{4.9} & &
\textcolor{blue}{2.36E-06} & \textcolor{blue}{3.9} & &
\textcolor{black}{2.27E-24} & \textcolor{black}{12.0} & &
\textcolor{blue}{1.17E-15} & \textcolor{blue}{9.5} & &
\textcolor{black}{5.70E-38} & \textcolor{black}{23.2}\\
& & 480 & &
\textcolor{black}{2.80E-13} & \textcolor{black}{6.0} & &
\textcolor{blue}{6.89E-10} & \textcolor{blue}{5.0} & &
\textcolor{blue}{1.51E-07} & \textcolor{blue}{4.0} & &
\textcolor{black}{5.52E-28} & \textcolor{black}{12.0} & &
\textcolor{blue}{1.23E-18} & \textcolor{blue}{9.9} & &
\textcolor{black}{2.17E-43} & \textcolor{black}{18.0}\\
& & 960 & &
\textcolor{black}{4.37E-15} & \textcolor{black}{6.0} & &
\textcolor{blue}{2.17E-11} & \textcolor{blue}{5.0} & &
\textcolor{blue}{9.50E-09} & \textcolor{blue}{4.0} & &
\textcolor{black}{1.35E-31} & \textcolor{black}{12.0} & &
\textcolor{blue}{1.23E-21} & \textcolor{blue}{10.0} & &
\textcolor{black}{8.28E-49} & \textcolor{black}{18.0}\\
\bottomrule
\end{tabular}
\end{table}

Following the same colour scheme as before, the errors and convergence orders in blue correspond to derivative approximations obtained from post-processing structural equations. In black, only the errors and convergence orders for the function are provided, since the derivatives associated with the physical equations have the same convergence order by construction of the structural schemes. In that regard, the errors exhibit the expected convergence orders, and the structural schemes with higher convergence orders prevent the numerical approximation from degrading at the final time. Experiments with a steeper solution $\sigma(\lambda t)$ with $\lambda=10,20$, which are not provided for compactness, further support the ability of the structural schemes to handle low-conditioning problems.

\subsubsection{Finite time solution with blow-up}

Another class of challenging problems concerns solutions that end at a finite time $T$ with a blow-up, that is, $\lim_{t\to T}\phi(t)=+\infty$. As an example, let us consider the ODE $\phi'=\phi^2$ in $(0,1)$ with the initial condition $\phi(0)=1$, whose solution is $\phi(t)=1/(1-t)$. We compute the numerical solution up to $t=0.95$, whose exact value is $\phi(0.95)=20$. Table~\ref{ODE3c_merge_K1K2K3_RN} reports the errors and convergence orders for different values of $K$ and $R$, choosing $I_p=I_{p,\min}$ to complete the table with the higher derivatives, determined from post-processing structural equations.

\begin{table}[H]
\centering
\caption{Errors and convergence orders for the non-linear scalar ODE benchmark with a blow-up solution at $t=0.95$.}
\label{ODE3c_merge_K1K2K3_RN}
\begin{tabular}{@{}r@{}r@{}r@{}r@{}rr@{}r@{}rr@{}r@{}rr@{}r@{}rr@{}r@{}rr@{}r@{}rr@{}}
\toprule
R & \phantom{aa} & N & \phantom{aa} &
\multicolumn{8}{c}{K=1} & \phantom{aa} &
\multicolumn{5}{c}{K=2} & \phantom{aa} &
\multicolumn{2}{c}{K=3}\\
\cmidrule{5-12}\cmidrule{14-18}\cmidrule{20-21}
&&&& \multicolumn{2}{@{}l@{}}{$\phi$} & \phantom{aa} &
\multicolumn{2}{@{}l@{}}{$\phi^{(2)}$} & \phantom{aa} &
\multicolumn{2}{@{}l@{}}{$\phi^{(3)}$} &
& \multicolumn{2}{@{}l@{}}{$\phi$} & \phantom{aa} &
\multicolumn{2}{@{}l@{}}{$\phi^{(3)}$} &
& \multicolumn{2}{@{}l@{}}{$\phi$}\\
\cmidrule{5-6}\cmidrule{8-9}\cmidrule{11-12}\cmidrule{14-15}\cmidrule{17-18}\cmidrule{20-21}
&&&& err & ord & & err & ord & & err & ord & & err & ord & & err & ord & & err & ord\\
\midrule
\multirow{5}{*}{1} & & & &
& & & \multicolumn{2}{r@{}}{\textcolor{blue}{Ip=1}} & & \multicolumn{2}{r@{}}{\textcolor{blue}{Ip=2}}
& & & & & \multicolumn{2}{r@{}}{\textcolor{blue}{Ip=2}}
& & &\\
& & 120 & &
\textcolor{black}{2.45E-01} & \textcolor{black}{\emdash} & &
\textcolor{blue}{2.56E+03} & \textcolor{blue}{\emdash} & &
\textcolor{blue}{4.04E+06} & \textcolor{blue}{\emdash} & &
\textcolor{black}{7.00E-04} & \textcolor{black}{\emdash} & &
\textcolor{blue}{5.65E+04} & \textcolor{blue}{\emdash} & &
\textcolor{black}{3.12E-06} & \textcolor{black}{\emdash}\\
& & 240 & &
\textcolor{black}{6.00E-02} & \textcolor{black}{2.0} & &
\textcolor{blue}{1.56E+03} & \textcolor{blue}{0.7} & &
\textcolor{blue}{4.78E+06} & \textcolor{blue}{\emdash} & &
\textcolor{black}{4.37E-05} & \textcolor{black}{4.0} & &
\textcolor{blue}{1.62E+04} & \textcolor{blue}{1.8} & &
\textcolor{black}{4.91E-08} & \textcolor{black}{6.0}\\
& & 480 & &
\textcolor{black}{1.49E-02} & \textcolor{black}{2.0} & &
\textcolor{blue}{8.58E+02} & \textcolor{blue}{0.9} & &
\textcolor{blue}{5.31E+06} & \textcolor{blue}{\emdash} & &
\textcolor{black}{2.73E-06} & \textcolor{black}{4.0} & &
\textcolor{blue}{4.26E+03} & \textcolor{blue}{1.9} & &
\textcolor{black}{7.69E-10} & \textcolor{black}{6.0}\\
& & 960 & &
\textcolor{black}{3.72E-03} & \textcolor{black}{2.0} & &
\textcolor{blue}{4.51E+02} & \textcolor{blue}{0.9} & &
\textcolor{blue}{5.63E+06} & \textcolor{blue}{\emdash} & &
\textcolor{black}{1.70E-07} & \textcolor{black}{4.0} & &
\textcolor{blue}{1.08E+03} & \textcolor{blue}{2.0} & &
\textcolor{black}{1.20E-11} & \textcolor{black}{6.0}\\
\midrule
\multirow{5}{*}{2} & & & &
& & & \multicolumn{2}{r@{}}{\textcolor{blue}{Ip=2}} & & \multicolumn{2}{r@{}}{\textcolor{blue}{Ip=3}}
& & & & & \multicolumn{2}{r@{}}{\textcolor{blue}{Ip=2}}
& & &\\
& & 120 & &
\textcolor{black}{2.75E-03} & \textcolor{black}{\emdash} & &
\textcolor{blue}{9.07E+02} & \textcolor{blue}{\emdash} & &
\textcolor{blue}{3.64E+05} & \textcolor{blue}{\emdash} & &
\textcolor{black}{3.15E-05} & \textcolor{black}{\emdash} & &
\textcolor{blue}{2.09E+03} & \textcolor{blue}{\emdash} & &
\textcolor{black}{2.60E-09} & \textcolor{black}{\emdash}\\
& & 240 & &
\textcolor{black}{1.74E-04} & \textcolor{black}{4.0} & &
\textcolor{blue}{2.95E+02} & \textcolor{blue}{1.6} & &
\textcolor{blue}{2.76E+05} & \textcolor{blue}{0.4} & &
\textcolor{black}{5.16E-07} & \textcolor{black}{5.9} & &
\textcolor{blue}{2.08E+02} & \textcolor{blue}{3.3} & &
\textcolor{black}{3.05E-12} & \textcolor{black}{9.7}\\
& & 480 & &
\textcolor{black}{1.09E-05} & \textcolor{black}{4.0} & &
\textcolor{blue}{8.54E+01} & \textcolor{blue}{1.8} & &
\textcolor{blue}{1.62E+05} & \textcolor{blue}{0.8} & &
\textcolor{black}{8.17E-09} & \textcolor{black}{6.0} & &
\textcolor{blue}{1.67E+01} & \textcolor{blue}{3.6} & &
\textcolor{black}{3.14E-15} & \textcolor{black}{9.9}\\
& & 960 & &
\textcolor{black}{6.82E-07} & \textcolor{black}{4.0} & &
\textcolor{blue}{2.31E+01} & \textcolor{blue}{1.9} & &
\textcolor{blue}{8.55E+04} & \textcolor{blue}{0.9} & &
\textcolor{black}{1.28E-10} & \textcolor{black}{6.0} & &
\textcolor{blue}{1.20E+00} & \textcolor{blue}{3.8} & &
\textcolor{black}{3.11E-18} & \textcolor{black}{10.0}\\
\midrule
\multirow{5}{*}{3} & & & &
& & & \multicolumn{2}{r@{}}{\textcolor{blue}{Ip=2}} & & \multicolumn{2}{r@{}}{\textcolor{blue}{Ip=3}}
& & & & & \multicolumn{2}{r@{}}{\textcolor{blue}{Ip=3}}
& & &\\
& & 120 & &
\textcolor{black}{6.09E-03} & \textcolor{black}{\emdash} & &
\textcolor{blue}{3.27E+02} & \textcolor{blue}{\emdash} & &
\textcolor{blue}{1.81E+05} & \textcolor{blue}{\emdash} & &
\textcolor{black}{2.87E-06} & \textcolor{black}{\emdash} & &
\textcolor{blue}{2.62E+02} & \textcolor{blue}{\emdash} & &
\textcolor{black}{7.19E-10} & \textcolor{black}{\emdash}\\
& & 240 & &
\textcolor{black}{3.89E-04} & \textcolor{black}{4.0} & &
\textcolor{blue}{6.71E+01} & \textcolor{blue}{2.3} & &
\textcolor{blue}{6.78E+04} & \textcolor{blue}{1.4} & &
\textcolor{black}{1.40E-08} & \textcolor{black}{7.7} & &
\textcolor{blue}{9.50E+00} & \textcolor{blue}{4.8} & &
\textcolor{black}{2.90E-13} & \textcolor{black}{11.3}\\
& & 480 & &
\textcolor{black}{2.45E-05} & \textcolor{black}{4.0} & &
\textcolor{blue}{1.10E+01} & \textcolor{blue}{2.6} & &
\textcolor{blue}{2.14E+04} & \textcolor{blue}{1.7} & &
\textcolor{black}{5.84E-11} & \textcolor{black}{7.9} & &
\textcolor{blue}{2.38E-01} & \textcolor{blue}{5.3} & &
\textcolor{black}{8.37E-17} & \textcolor{black}{11.8}\\
& & 960 & &
\textcolor{black}{1.53E-06} & \textcolor{black}{4.0} & &
\textcolor{blue}{1.60E+00} & \textcolor{blue}{2.8} & &
\textcolor{blue}{6.05E+03} & \textcolor{blue}{1.8} & &
\textcolor{black}{2.32E-13} & \textcolor{black}{8.0} & &
\textcolor{blue}{4.79E-03} & \textcolor{blue}{5.6} & &
\textcolor{black}{2.14E-20} & \textcolor{black}{11.9}\\
\midrule
\multirow{5}{*}{4} & & & &
& & & \multicolumn{2}{r@{}}{\textcolor{blue}{Ip=3}} & & \multicolumn{2}{r@{}}{\textcolor{blue}{Ip=4}}
& & & & & \multicolumn{2}{r@{}}{\textcolor{blue}{Ip=4}}
& & &\\
& & 120 & &
\textcolor{black}{5.12E-04} & \textcolor{black}{\emdash} & &
\textcolor{blue}{1.51E+02} & \textcolor{blue}{\emdash} & &
\textcolor{blue}{8.97E+04} & \textcolor{blue}{\emdash} & &
\textcolor{black}{4.00E-07} & \textcolor{black}{\emdash} & &
\textcolor{blue}{4.57E+01} & \textcolor{blue}{\emdash} & &
\textcolor{black}{8.18E-12} & \textcolor{black}{\emdash}\\
& & 240 & &
\textcolor{black}{9.68E-06} & \textcolor{black}{5.7} & &
\textcolor{blue}{1.90E+01} & \textcolor{blue}{3.0} & &
\textcolor{blue}{2.14E+04} & \textcolor{blue}{2.1} & &
\textcolor{black}{6.70E-10} & \textcolor{black}{9.2} & &
\textcolor{blue}{6.50E-01} & \textcolor{blue}{6.1} & &
\textcolor{black}{4.15E-16} & \textcolor{black}{14.3}\\
& & 480 & &
\textcolor{black}{1.61E-07} & \textcolor{black}{5.9} & &
\textcolor{blue}{1.79E+00} & \textcolor{blue}{3.4} & &
\textcolor{blue}{3.89E+03} & \textcolor{blue}{2.5} & &
\textcolor{black}{7.92E-13} & \textcolor{black}{9.7} & &
\textcolor{blue}{5.34E-03} & \textcolor{blue}{6.9} & &
\textcolor{black}{1.03E-20} & \textcolor{black}{15.3}\\
& & 960 & &
\textcolor{black}{2.55E-09} & \textcolor{black}{6.0} & &
\textcolor{blue}{1.40E-01} & \textcolor{blue}{3.7} & &
\textcolor{blue}{5.98E+02} & \textcolor{blue}{2.7} & &
\textcolor{black}{8.17E-16} & \textcolor{black}{9.9} & &
\textcolor{blue}{3.12E-05} & \textcolor{blue}{7.4} & &
\textcolor{black}{1.82E-25} & \textcolor{black}{15.8}\\
\midrule
\multirow{5}{*}{5} & & & &
& & & \multicolumn{2}{r@{}}{\textcolor{blue}{Ip=3}} & & \multicolumn{2}{r@{}}{\textcolor{blue}{Ip=4}}
& & & & & \multicolumn{2}{r@{}}{\textcolor{blue}{Ip=4}}
& & &\\
& & 120 & &
\textcolor{black}{9.94E-04} & \textcolor{black}{\emdash} & &
\textcolor{blue}{7.11E+01} & \textcolor{blue}{\emdash} & &
\textcolor{blue}{4.75E+04} & \textcolor{blue}{\emdash} & &
\textcolor{black}{7.48E-08} & \textcolor{black}{\emdash} & &
\textcolor{blue}{1.00E+01} & \textcolor{blue}{\emdash} & &
\textcolor{black}{3.23E-12} & \textcolor{black}{\emdash}\\
& & 240 & &
\textcolor{black}{2.00E-05} & \textcolor{black}{5.6} & &
\textcolor{blue}{6.04E+00} & \textcolor{blue}{3.6} & &
\textcolor{blue}{7.47E+03} & \textcolor{blue}{2.7} & &
\textcolor{black}{4.87E-11} & \textcolor{black}{10.6} & &
\textcolor{blue}{5.99E-02} & \textcolor{blue}{7.4} & &
\textcolor{black}{7.45E-17} & \textcolor{black}{15.4}\\
& & 480 & &
\textcolor{black}{3.41E-07} & \textcolor{black}{5.9} & &
\textcolor{blue}{3.38E-01} & \textcolor{blue}{4.2} & &
\textcolor{blue}{8.06E+02} & \textcolor{blue}{3.2} & &
\textcolor{black}{1.77E-14} & \textcolor{black}{11.4} & &
\textcolor{blue}{1.69E-04} & \textcolor{blue}{8.5} & &
\textcolor{black}{6.41E-22} & \textcolor{black}{16.8}\\
& & 960 & &
\textcolor{black}{5.46E-09} & \textcolor{black}{6.0} & &
\textcolor{blue}{1.45E-02} & \textcolor{blue}{4.5} & &
\textcolor{blue}{6.81E+01} & \textcolor{blue}{3.6} & &
\textcolor{black}{4.90E-18} & \textcolor{black}{11.8} & &
\textcolor{blue}{2.96E-07} & \textcolor{blue}{9.2} & &
\textcolor{black}{3.25E-27} & \textcolor{black}{17.6}\\
\bottomrule
\end{tabular}
\end{table}

The convergence orders are perfectly in line with the expected results for both the solution and derivative approximations. Experiments considering the numerical approximations at $t=0.995$, whose exact solution is $\phi(0.995)=200$, and which are not provided for compactness, provide the same convergence orders.

\subsection{Linear vectorial ODE}

Differential system involving several variables coupled via ODEs is a fundamental topic for several reasons: the splitting of a second- or higher-order differential scalar equation into a first-order differential system, Hamiltonian problems, which lead to a set of first-order differential equations combining the primal and dual variables, and semi-discretisation in space of non-stationary PDEs, which provides large (non-)linear systems to numerically solve. We explore the basic case of the linear wave propagation as a representative example, where we split the wave equation into the first-order differential system
\[
\left\{
\begin{array}{l}
\phi'(t)=\alpha\psi(t),\\
\psi'(t)=-\alpha\phi(t),
\end{array}
\right.
\]
in $(0,1]$ with the initial conditions $\phi(0)=1$ and $\psi(0)=0$. The solutions are simply $\phi(t)=\cos(\alpha t)$ and $\psi(t)=-\sin(\alpha t)$, where $\alpha$ is the angular frequency.

\begin{table}[H]
\centering
\caption{Errors and convergence orders for $\phi$ the linear vectorial ODE benchmark.}
\label{ODE4_SemPre_merge_K1K2K3_phi}
\begin{tabular}{@{}r@{}r@{}r@{}r@{}rr@{}r@{}rr@{}r@{}rr@{}}
\toprule
R & \phantom{aa} & N & \phantom{aa} &
\multicolumn{2}{c}{K=1} & \phantom{aa} &
\multicolumn{2}{c}{K=2} & \phantom{aa} &
\multicolumn{2}{c}{K=3}\\
\cmidrule{5-6}\cmidrule{8-9}\cmidrule{11-12}
&&&& err & ord && err & ord && err & ord\\
\midrule
\multirow{3}{*}{1} && 60 && 2.03E-03 & \emdash && 4.14E-07 & \emdash && 3.57E-11 & \emdash\\
&& 120 && 5.12E-04 & 2.0 && 2.59E-08 & 4.0 && 5.58E-13 & 6.0\\
&& 240 && 1.28E-04 & 2.0 && 1.62E-09 & 4.0 && 8.73E-15 & 6.0\\
\midrule
\multirow{3}{*}{2} && 60 && 6.60E-06 & \emdash && 3.80E-10 & \emdash && 2.01E-17 & \emdash\\
&& 120 && 4.14E-07 & 4.0 && 5.95E-12 & 6.0 && 1.96E-20 & 10.0\\
&& 240 && 2.59E-08 & 4.0 && 9.31E-14 & 6.0 && 1.92E-23 & 10.0\\
\midrule
\multirow{3}{*}{3} && 60 && 3.68E-06 & \emdash && 4.14E-13 & \emdash && 3.36E-21 & \emdash\\
&& 120 && 2.32E-07 & 4.0 && 1.62E-15 & 8.0 && 8.25E-25 & 12.0\\
&& 240 && 1.45E-08 & 4.0 && 6.36E-18 & 8.0 && 2.02E-28 & 12.0\\
\midrule
\multirow{3}{*}{4} && 60 && 4.53E-08 & \emdash && 5.28E-16 & \emdash && 1.09E-26 & \emdash\\
&& 120 && 7.13E-10 & 6.0 && 5.22E-19 & 10.0 && 1.66E-31 & 16.0\\
&& 240 && 1.12E-11 & 6.0 && 5.11E-22 & 10.0 && 2.54E-36 & 16.0\\
\midrule
\multirow{3}{*}{5} && 60 && 1.58E-08 & \emdash && 7.78E-19 & \emdash && 1.28E-30 & \emdash\\
&& 120 && 2.54E-10 & 6.0 && 1.94E-22 & 12.0 && 4.97E-36 & 18.0\\
&& 240 && 3.99E-12 & 6.0 && 4.76E-26 & 12.0 && 1.91E-41 & 18.0\\
\bottomrule
\end{tabular}
\end{table}

We consider $\alpha=2.1\pi$ to avoid the situation where the solution at the final time does not correspond to an extremum, which might produce super-convergence effects. The errors and convergence orders for different values of $K$ and $R$ are reported in Table~\ref{ODE4_SemPre_merge_K1K2K3_phi}. As observed, the expected convergence orders are obtained, following the same behaviour as in the scalar case, and the coupling between $\phi(t)$ and $\psi(t)$ does not cause any additional difficulty. Experiments considering larger values for $\alpha$, which are not provided for compactness, reproduced the same convergence orders and stability.

Another point concerns the invariant conservation. Indeed, the quantity $m(t)=\phi^2(t)+\psi^2(t)$ has to be preserved throughout the simulation. Let us denote as $m_{n+r}$ the approximation at time $t_{n+r}$ and, for any structural scheme $\SK{K}{R}$, for which the invariant error is given as
\[
e_m=\max_{i=0,\ldots,N/R}|m_{iR+r}-1|,\quad r\bint{1,R}.
\]
In other words, $r=1,\ldots,R-1$ are the intermediate time steps, whereas $r=R$ is the final time step within each block.

\begin{table}[ht]
\centering
\caption{Invariant errors for the linear vectorial ODE benchmark with $K=1,2,3$ and $R=4$.}
\label{ODE4_invariant}
\begin{tabular}{@{}rrcccc@{}}
\toprule
K & N & r=1 & r=2 & r=3 & r=4\\
\midrule
\multirow{3}{*}{1} & 60 & 6.51E-08 & 3.82E-08 & 6.51E-08 & \textcolor{blue}{1.47E-61}\\
& 120 & 1.03E-09 & 6.09E-10 & 1.03E-09 & \textcolor{blue}{9.45E-62}\\
& 240 & 1.62E-11 & 9.57E-12 & 1.62E-11 & \textcolor{blue}{2.96E-62}\\
\midrule
\multirow{3}{*}{2} & 60 & 1.73E-17 & 1.82E-17 & 1.73E-17 & \textcolor{blue}{1.35E-60}\\
& 120 & 4.25E-21 & 4.48E-21 & 4.25E-21 & \textcolor{blue}{3.30E-61}\\
& 240 & 1.04E-24 & 1.09E-24 & 1.04E-24 & \textcolor{blue}{7.67E-61}\\
\midrule
\multirow{3}{*}{3} & 60 & 1.34E-26 & 1.25E-26 & 1.34E-26 & \textcolor{blue}{9.47E-61}\\
& 120 & 2.07E-31 & 1.94E-31 & 2.07E-31 & \textcolor{blue}{3.80E-61}\\
& 240 & 3.17E-36 & 2.97E-36 & 3.17E-36 & \textcolor{blue}{1.77E-61}\\
\bottomrule
\end{tabular}
\end{table}

The invariant errors for $K=1,2,3$ and $R=4$ are reported in Table~\ref{ODE4_invariant} using octuple-precision in the {\tt Julia} framework. As observed, the invariant preservation is only achieved with full precision at the last step of each block ($R=4$, in blue), whereas the invariant errors of the intermediate steps are of the same convergence order as that of the solution and derivatives. Indeed, for $R=4$, the invariant errors observed correspond to the machine truncation for octuple-precision numbers. Similar experiments have been carried out with different structural schemes, not provided for compactness, which show the same behaviour.

\subsection{The Van der Pol equation}

The van der Pol equation enables the modelling of oscillatory non-linear systems in many scientific fields and is a standard benchmark for assessing numerical schemes~\cite{G72,XA13}. Indeed, the solution exhibits several regimes (almost constant in some intervals, with rapid changes) that are challenging from a computational perspective. We consider the original Van der Pol equation
$x''(t)-\mu(1-x^2(t))x'(t)+x(t)=0$ in $(0,T]$, where $x$ is the position, with the initial conditions $x(0)=0$ and $x'(0)=0$, which is rewritten as a first-order differential system, given as
\[
\left\{
\begin{array}{l}
x'(t)=y(t),\\
y'(t)=\mu(1-x^2(t))y(t)-x(t),
\end{array}
\right.
\]
where the parameter $\mu$ controls the stiffness of the solution.
\subsubsection{Convergence assessemnt}
We consider two situations: a smooth transition with $\mu=1$ and a sharp transition with $\mu=50$. Following the reference~\cite{AMORE2022133279}, we choose the initial conditions corresponding to the maximum of the solution and evaluate the numerical approximations after a complete cycle, for which the position, velocity, and period with 76 digits are retrieved from the same reference, and read
\begin{itemize}
\item For $\mu=1$:

{\smalltablepolice
\hspace{0.5cm}$\scriptstyle x(0)=$2.008619860 8748431365 0964018836 2640366192 0261377207 1446112770 3247615558 726585,\\
\hspace*{0.5cm}$\scriptstyle y(0)=$0,\\
\hspace*{0.5cm}$\scriptstyle T=$6.663286859 3231301896 9968203048 2328706812 6463168838 7665511486 3182080925 864845.
}
\item For $\mu=50$:

{\smalltablepolice
\hspace{0.5cm}$\scriptstyle x(0)=$2.002955933 7576484028 2610971894 5871764979 9147201460 6949163926 8919228280 941255,\\
\hspace*{0.5cm}$\scriptstyle y(0)=$0,\\
\hspace*{0.5cm}$\scriptstyle T=$82.50833389 3230782186 8333686974 9987459374 0590858618 6277867538 5441021240 035171.
}
\end{itemize}

The errors and convergence orders for the position are reported in Table~\ref{vdPolmu1} with $\mu=1$, while Figure~\ref{fig:vdPolmu1} illustrates the position and the velocity for a complete cycle for $K=1$, $R=1$, and $N=480$. The velocity errors are not reported because they exhibit the same behaviour as the position errors. We first observe that the expected convergence orders are achieved, further supporting the excellent efficiency of the structural schemes. Table~\ref{vdPolmu1} also provides the average number of times the physical equations are evaluated per time step, $\bar\tau$, required to achieve the convergence of the fixed-point.

\begin{table}[H]
\centering
\caption{Position errors and convergence order for the van der Pol equations benchmark with $\mu=1$.}
\label{vdPolmu1}
\begin{tabular}{@{}r@{}r@{}r@{} r
@{}rrr@{} r
@{}rrr@{} r
@{}rrr@{}}
\toprule
R & \phantom{aa} & N & \phantom{aa} &
\multicolumn{3}{c}{K=1} & \phantom{aa} &
\multicolumn{3}{c}{K=2} & \phantom{aa} &
\multicolumn{3}{c}{K=3}\\
\cmidrule{5-7}\cmidrule{9-11}\cmidrule{13-15}
&&&& $\bar\tau$ & err & ord && $\bar\tau$ & err & ord && $\bar\tau$ & err & ord\\
\midrule
\multirow{3}{*}{1} && 480 && 20.85 & 2.88E-05 & \emdash && 39.94 & 3.10E-09 & \emdash && 88.74 & 8.37E-14 & \emdash\\
&& 600 && 19.79 & 1.85E-05 & 2.0 && 37.80 & 1.27E-09 & 4.0 && 84.06 & 2.20E-14 & 6.0\\
&& 720 && 19.04 & 1.28E-05 & 2.0 && 36.20 & 6.12E-10 & 4.0 && 80.25 & 7.35E-15 & 6.0\\
\midrule
\multirow{3}{*}{2} && 480 && 10.90 & 3.26E-08 & \emdash && 22.00 & 8.93E-13 & \emdash && 45.36 & 2.35E-20 & \emdash\\
&& 600 && 10.33 & 1.34E-08 & 4.0 && 20.76 & 2.34E-13 & 6.0 && 42.72 & 2.52E-21 & 10.0\\
&& 720 && 9.89 & 6.44E-09 & 4.0 && 19.84 & 7.84E-14 & 6.0 && 41.07 & 4.07E-22 & 10.0\\
\midrule
\multirow{3}{*}{3} && 480 && 7.67 & 2.79E-08 & \emdash && 16.10 & 6.39E-16 & \emdash && 31.80 & 1.60E-23 & \emdash\\
&& 600 && 7.25 & 1.14E-08 & 4.0 && 15.08 & 1.07E-16 & 8.0 && 30.03 & 1.10E-24 & 12.0\\
&& 720 && 6.95 & 5.51E-09 & 4.0 && 14.34 & 2.49E-17 & 8.0 && 28.62 & 1.23E-25 & 12.0\\
\midrule
\multirow{3}{*}{4} && 480 && 5.90 & 5.16E-11 & \emdash && 13.30 & 1.66E-18 & \emdash && 35.32 & 1.84E-28 & \emdash\\
&& 600 && 5.59 & 1.36E-11 & 6.0 && 12.44 & 1.78E-19 & 10.0 && 33.20 & 4.91E-30 & 16.2\\
&& 720 && 5.33 & 4.54E-12 & 6.0 && 11.76 & 2.87E-20 & 10.0 && 31.52 & 2.58E-31 & 16.2\\
\midrule
\multirow{3}{*}{5} && 480 && 4.87 & 3.84E-11 & \emdash && 11.98 & 3.76E-21 & \emdash && 24.21 & 9.41E-31 & \emdash\\
&& 600 && 4.59 & 1.01E-11 & 6.0 && 11.04 & 2.59E-22 & 12.0 && 22.44 & 1.68E-32 & 18.0\\
&& 720 && 4.38 & 3.37E-12 & 6.0 && 10.38 & 2.91E-23 & 12.0 && 21.24 & 6.30E-34 & 18.0\\
\bottomrule
\end{tabular}
\end{table}

\begin{figure}[H]
\centering
\begin{tabular}{cc}
\includegraphics[width=0.4\textwidth]{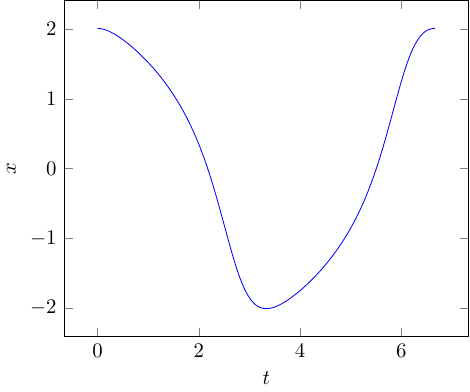} &
\includegraphics[width=0.4\textwidth]{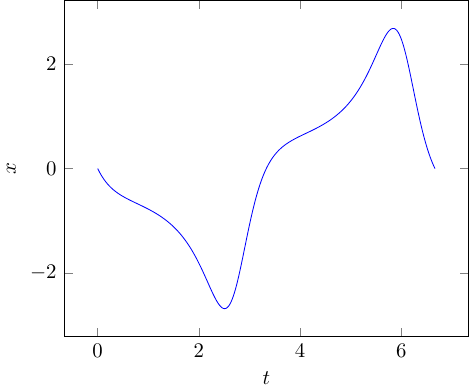}\\
(a) Position. & (b) Velocity.
\end{tabular}
\caption{Approximate solution for the van der Pol equations benchmark with $\mu=1$ for $K=1$, $R=1$, and $N=480$}.
\label{fig:vdPolmu1}
\end{figure}

Table~\ref{vdPolmu50} reports the errors and convergence orders for the position with $\mu=50$ (again, velocity is not reported). Due to the sharp transition, we used a larger number of time grid points to catch the velocity singularity, as shown in Figure~\ref{fig:vndPol50}. We observe a drastic reduction in the parameter $\bar\tau$ compared to the previous case with $\mu=1$, since the smaller $\Delta t$ provides better predictors and allows fixed-point convergence with fewer iterations per time step.

\begin{table}[H]
\centering
\caption{Position errors and convergence orders for the van der Pol equations benchmark with $\mu=50$.}
\label{vdPolmu50}
\begin{tabular}{@{}r@{}r@{}r@{}r@{}rrr@{}r@{}rrr@{}r@{}rrr@{}}
\toprule
R & \phantom{aa} & N & \phantom{aa} &
\multicolumn{3}{c}{K=1} & \phantom{aa} &
\multicolumn{3}{c}{K=2} & \phantom{aa} &
\multicolumn{3}{c}{K=3}\\
\cmidrule{5-7}\cmidrule{9-11}\cmidrule{13-15}
&&&& $\bar\tau$ & err & ord && $\bar\tau$ & err & ord && $\bar\tau$ & err & ord\\
\midrule
\multirow{3}{*}{1} && 840,000 && 4.47 & 1.57E-06 & \emdash && 21.48 & 2.16E-12 & \emdash && 50.73 & 3.10E-18 & \emdash\\
&& 960,000 && 4.17 & 1.21E-06 & 2.0 && 20.94 & 1.26E-12 & 4.0 && 49.20 & 1.39E-18 & 6.0\\
&& 1,080,000 && 4.09 & 9.55E-07 & 2.0 && 20.20 & 7.89E-13 & 4.0 && 48.21 & 6.87E-19 & 6.0\\
\midrule
\multirow{3}{*}{2} && 840,000 && 2.51 & 3.44E-11 & \emdash && 11.62 & 3.31E-17 & \emdash && 25.95 & 2.36E-27 & \emdash\\
&& 960,000 && 2.51 & 2.02E-11 & 4.0 && 11.32 & 1.49E-17 & 6.0 && 25.32 & 6.20E-28 & 10.0\\
&& 1,080,000 && 2.51 & 1.26E-11 & 4.0 && 10.90 & 7.33E-18 & 6.0 && 24.66 & 1.91E-28 & 10.0\\
\midrule
\multirow{3}{*}{3} && 840,000 && 1.68 & 1.94E-11 & \emdash && 8.32 & 1.06E-21 & \emdash && 18.03 & 2.64E-32 & \emdash\\
&& 960,000 && 1.68 & 1.14E-11 & 4.0 && 8.06 & 3.64E-22 & 8.0 && 17.49 & 5.31E-33 & 12.0\\
&& 1,080,000 && 1.67 & 7.10E-12 & 4.0 && 7.82 & 1.42E-22 & 8.0 && 17.04 & 1.29E-33 & 12.0\\
\midrule
\multirow{3}{*}{4} && 840,000 && 1.26 & 3.99E-15 & \emdash && 6.68 & 6.27E-26 & \emdash && 14.64 & 9.02E-40 & \emdash\\
&& 960,000 && 1.26 & 1.79E-15 & 6.0 && 6.48 & 1.65E-26 & 10.0 && 14.16 & 1.07E-40 & 16.0\\
&& 1,080,000 && 1.26 & 8.83E-16 & 6.0 && 6.28 & 5.08E-27 & 10.0 && 13.80 & 1.62E-41 & 16.0\\
\midrule
\multirow{3}{*}{5} && 840,000 && 1.20 & 1.42E-15 & \emdash && 5.78 & 6.21E-30 & \emdash && 12.84 & 1.61E-44 & \emdash\\
&& 960,000 && 1.03 & 6.38E-16 & 6.0 && 5.58 & 1.25E-30 & 12.0 && 12.39 & 1.46E-45 & 18.0\\
&& 1,080,000 && 1.01 & 3.15E-16 & 6.0 && 5.38 & 3.04E-31 & 12.0 && 12.06 & 1.75E-46 & 18.0\\
\bottomrule
\end{tabular}
\end{table}

\begin{figure}[H]
\centering
\begin{tabular}{cc}
\includegraphics[width=0.4\textwidth]{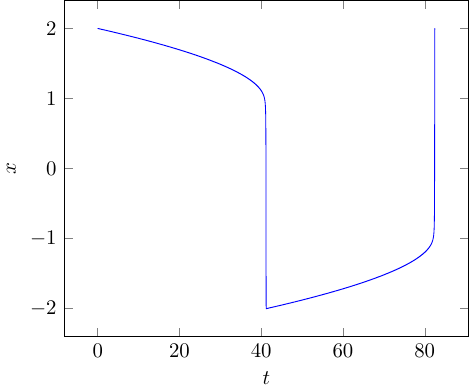} &
\includegraphics[width=0.4\textwidth]{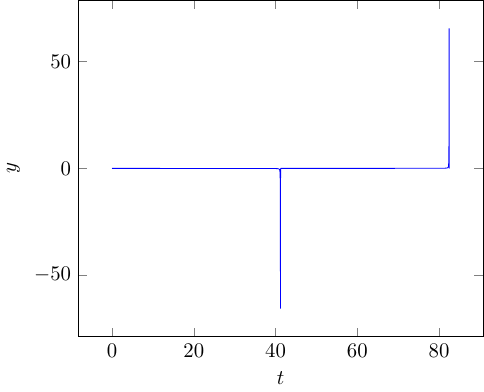}\\
(a) Position. & (b) Velocity.
\end{tabular}
\caption{Approximate solution for the van der Pol equations benchmark with $\mu=50$ for $K=1$, $R=1$, and $N=840000$}.
\label{fig:vndPol50}
\end{figure}

\subsubsection{Variable time step}
\rb{The present benchmark aims to demonstrate that a substantial reduction in computational cost can be achieved through better control of the time step according to the smoothness of the solution. The van der Pol equation with large $\mu$ is an interesting example, since the solution is almost linear over large portions of the time interval, while strong variations occur over small intervals. We use a grid with two different time steps, referred to as a two-step grid: the smooth part is given by the set $\Omega_s=[0,41]\cup[42,79]$, while the peak regions correspond to the domain $[41,42]\cup[79,80]$ {\color{red} Provide the exact values of the intervals}. We use $\Delta t_s$, corresponding to a subdivision with $N_s$ points in the smooth region, whereas $\Delta t_p$ is controlled by $N_p$ in the peak regions. On the other hand, we also consider a uniform time discretisation with time step $\Delta t$ and $N$ points.
}

\begin{table}[H]
\centering
\caption{Comparison of time (in ms) versus error for $K=1$, $K=2$, and $K=3$.}
\begin{tabular}{@{}c r rr rr rr@{}}
\toprule
R & N &
\multicolumn{2}{c}{K=1} &
\multicolumn{2}{c}{K=2} &
\multicolumn{2}{c}{K=3} \\
\cmidrule(lr){3-4}
\cmidrule(lr){5-6}
\cmidrule(lr){7-8}
& {N\textsubscript{s}}/{N\textsubscript{p}} &
time & err &
time & err &
time & err \\
\midrule

\multirow{6}{*}{1}
& 840k   & 878  & 4.6e-04 & 843  & 2.4e-06 & 1075 & 4.9e-09 \\
& 960k   & 956  & 3.5e-04 & 939  & 1.6e-06 & 1274 & 2.9e-09 \\
& 1,080k  & 1216 & 2.8e-04 & 1079 & 1.1e-06 & 1372 & 1.8e-09 \\
& 35k/7,5k   & 73   & 1.8e-03 & 85   & 2.0e-05 & 112  & 8.2e-08 \\
& 40k/10k    & 83   & 1.0e-03 & 98   & 8.4e-06 & 128  & 2.6e-08 \\
& 45k/12,5k  & 91   & 6.8e-04 & 113  & 4.3e-06 & 153  & 1.1e-08 \\

\midrule

\multirow{6}{*}{2}
& 840k   & 3947 & 5.3e-09 & 2571 & 4.9e-15 & 2089 & 1.9e-23 \\
& 960k   & 4300 & 3.1e-09 & 2825 & 2.2e-15 & 2271 & 2.2e-23 \\
& 1,080k  & 3974 & 1.9e-09 & 3065 & 1.1e-15 & 2478 & 2.4e-23 \\
& 35k/7,5k  & 537  & 9.0e-08 & 555  & 4.6e-13 & 538  & 6.7e-20 \\
& 40k/10k    & 571  & 2.9e-08 & 607  & 1.1e-13 & 606  & 1.8e-20 \\
& 45k/12,5k  & 623  & 1.2e-08 & 678  & 4.0e-14 & 599  & 5.5e-21 \\

\midrule

\multirow{6}{*}{4}
& 84k   & 3659 & 5.9e-13 & 3648 & 1.1e-23 & 2993 & 8.9e-23 \\
& 960k   & 3878 & 2.6e-13 & 3986 & 4.0e-24 & 3372 & 1.0e-22 \\
& 1,080k  & 4200 & 1.3e-13 & 4301 & 2.5e-24 & 3412 & 1.1e-22 \\
& 35k/7,5k & 690  & 4.0e-11 & 856  & 1.7e-17 & 826  & 7.9e-24 \\
& 40k/10k & 712  & 6.5e-12 & 922  & 4.5e-18 & 884  & 9.1e-24 \\
& 45k/12,5k  & 767  & 1.5e-12 & 1066 & 1.4e-18 & 969  & 1.0e-23 \\

\bottomrule
\end{tabular}
\end{table}

\rb{
We have selected several choices of $R$ and $N_s/N_p$ that provide similar accuracy errors in order to draw some conclusions. For a given error level, we are able to reduce the running time by a factor between 6 and 8. In the case $R=4$ for the two-step grid, the method reaches the limits of quad-precision arithmetic, and the errors are therefore bounded by the floating-point representation.
\\
These preliminary results demonstrate the potential for substantial gains by dynamically adjusting the time step according to a smoothness parameter, such as the first and second derivatives, and by appropriately tuning $\Delta t$ so as to preserve a prescribed accuracy while reducing the computational cost.
}

\subsection{The Chen system}

Chaotic differential equations are unstable in the sense that small perturbations of the initial conditions lead to entirely alternative solutions. Therefore, achieving consistency and stability in numerical computation is challenging. Indeed, chaotic differential equations are quite sensitive to the discretisation errors, which are amplified, leading to wrong approximations.
\subsubsection{General view}
An important dynamical system is the Chen chaotic problem, first introduced by Chen and Ueta~\cite{CU99}. It is considered as a traditional benchmark for assessing a numerical method's ability to reproduce the correct solution up to a given time since small variations lead to important deviations at the final time. We adopt the original formulation, given as
\[
\left\{
\begin{array}{l}
x'(t)= a(y(t)-x(t)),\\
y'(t)=(c-a)x(t)-x(t)z(t)+cy(t),\\
z'(t)=x(t)y(t)-bz(t),
\end{array}
\right.
\]
with $a$, $b$, and $c$ the parameters of the system. In the present simulation, we consider the usual values of $a = 35$, $b = 3$, and $c = 28$ and $T=15$, which provide a chaotic behaviour~\cite{UC00,LZC02}.

\subsubsection{Numerical simulations}

In Figure~\ref{tab:chen3D}, a three-dimensional view of the solution is illustrated using a second-order accurate approximation with $N=8400000$ and a $24^{\text{th}}$-order accurate approximation with $N=5000$. The solution has two attractors, clearly identified by two planes and two centres, which are captured in both approximations.

\begin{figure}
\centering
\begin{tabular}{cc}
\includegraphics[width=0.32\textwidth]{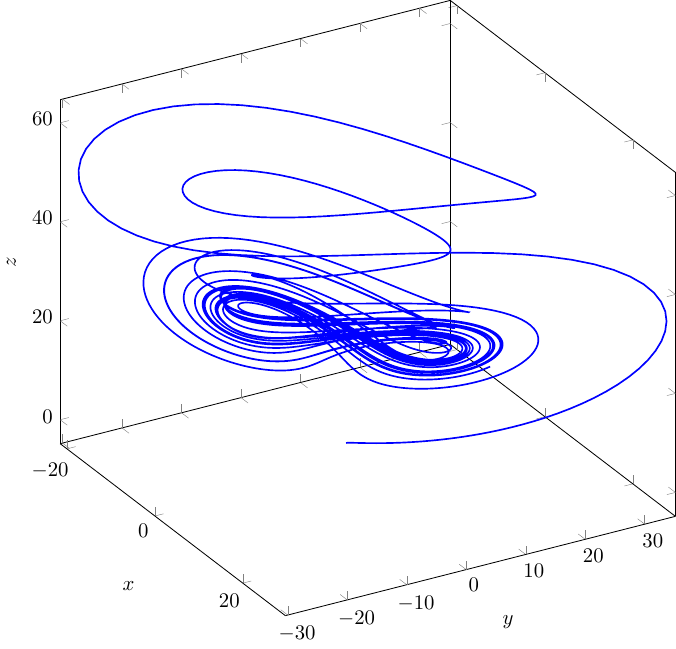}
\hspace{0.25cm} & \hspace{0.25cm}
\includegraphics[width=0.32\textwidth]{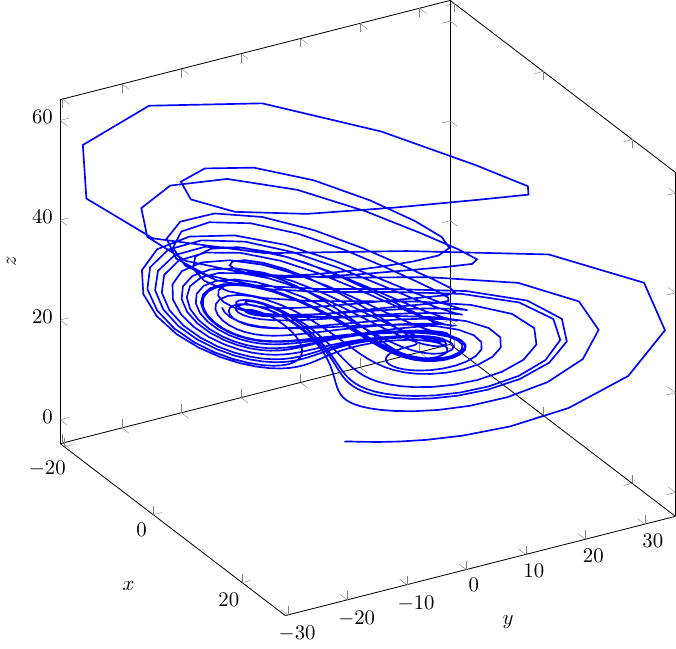}\\
(a) K=1, R=1, and N=8400000. & (b) K=4, R=5, and N=5000.
\end{tabular}
\caption{Approximate solution to the Chen system.}
\label{tab:chen3D}
\end{figure}

The same experience is reproduced with a structural scheme of an intermediate convergence order of $8$ with $K=2$ and $R=3$, and $N=60000$, whose approximate solution for the three coordinates as a function of time is illustrated in Figure~\ref{fig:chenCoordinates}.

\begin{figure}
\centering
\begin{tabular}{ccc}
\includegraphics[width=0.32\textwidth]{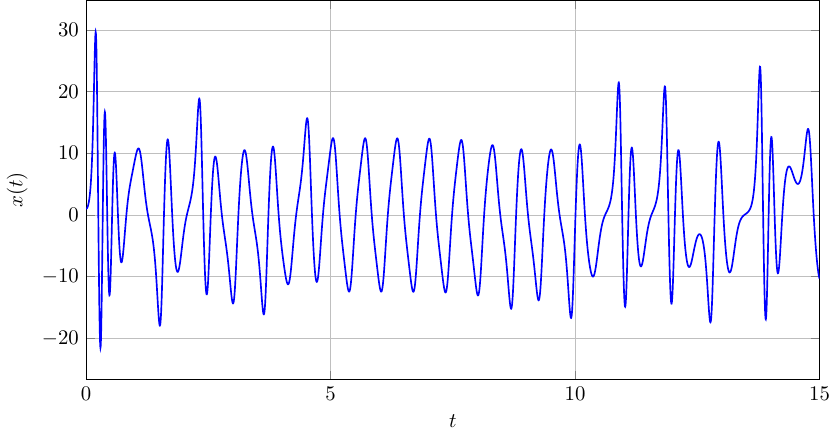}&
\includegraphics[width=0.32\textwidth]{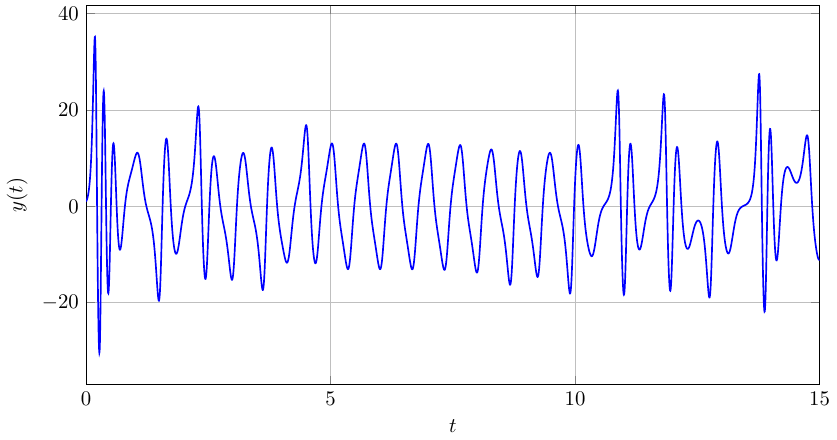}&
\includegraphics[width=0.32\textwidth]{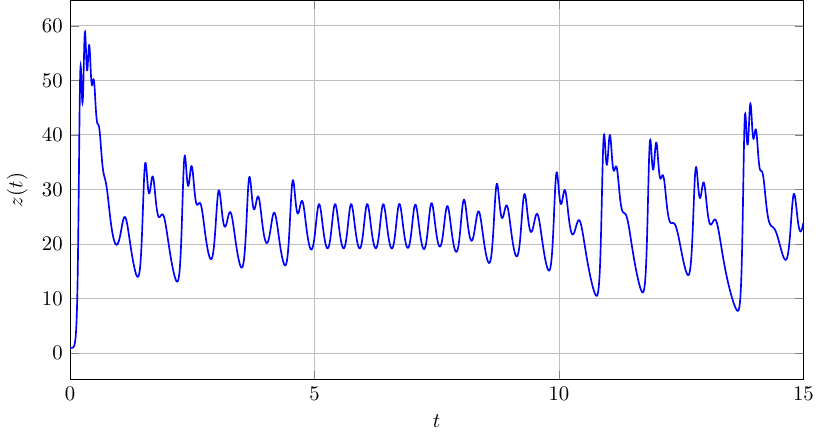}\\
(a) $x$-coordinate. & (c) $y$-coordinate. & (c) $z$-coordinate.
\end{tabular}
\caption{The three coordinate curves as a function of time for the Chen system benchmark with $K=2$, $R=3$, and $N=60000$.}
\label{fig:chenCoordinates}
\end{figure}

\subsubsection{Sensitivity}

The main issue in computing an approximate solution to a chaotic system is the high sensitivity of the structural schemes to the parameters $K$ and $R$, which determine the accuracy of the numerical approximations. Moreover, the iterative fixed-point procedure depends on the tolerance $\varepsilon$, which determines the residual of the final converged numerical solution. We study the impact of these parameters on the chaotic Chen system.

Since there is no exact solution to evaluate the approximate solution accuracy and stability, a reference solution is taken from performing a simulation with the structural scheme $\SK{4}{8}$ with convergence of order of $36$ using an octuple-precision, a very restrict tolerance of $\varepsilon=10^{-60}$ for the fixed-point residual, and a very fine time grid with $N=800000$ points to guarantee an extreme accuracy of the approximate solution. On the other hand, we perform the simulation with three different configurations: the structural scheme $\SK{1}{1}$ with $N=8400000$, $\SK{1}{2}$ with $N=4200000$, and $\SK{4}{5}$ with $N=5000$.

The three coordinates of the reference and approximate solutions are reported in Table~\ref{tab::sensitivity_parameters_KR}. The correct digits corresponding to the reference solution are reported in blue, whereas the digits that do not match the reference solution are reported in black. The structural scheme $\SK{1}{1}$, with second-order convergence, does not provide an acceptable approximate solution to the chaotic system, even with a large number of points, demonstrating the importance of using higher-order accurate schemes. The structural scheme $\SK{1}{2}$, with fourth-order of convergence, provides a rough approximation and is ineffective from a practical viewpoint. On the other side, the structural scheme $\SK{4}{5}$, with $24^{\text{th}}$-order of accuracy, manages to compute a reasonable approximation, even with a rather low number of time grid points. Indeed, very high-order accurate structural schemes deliver excellent approximations with compact stencils.

\begin{table}[H]
\centering
\caption{Reference and approximate solutions for the Chen system benchmark.}
\label{tab::sensitivity_parameters_KR}
\begin{tabular}{@{}lcc@{}}
\toprule
scheme & \phantom{a} & position coordinates\\
\midrule
\multirow{3}{*}{\parbox{3.7cm}{Reference\\$\SK{4}{8}$, N=800000}} &&
x=\textcolor{blue}{-0.1033913519761038851277830669958320E+02}\\
&& y=\textcolor{blue}{-0.1110031188034702371431870391381926E+02}\\
&& z=\textcolor{blue}{+0.2384877914088903906611669394788490E+02}\\
\midrule
\midrule
\multirow{3}{*}{$\SK{1}{1}$, N=8400000} &&
x=+0.1303137147172269558345093394817314E+02\\
&& y=+0.1633314173284638979195080632679370E+02\\
&& z=+0.1990546738498253871112868554565699E+02\\
\midrule
\multirow{3}{*}{$\SK{1}{2}$, N=4200000} &&
x=\textcolor{blue}{-0.1034}310938749620698298379213201446E+02\\
&& y=\textcolor{blue}{-0.1109}718401089064385254542611775984E+02\\
&& z=\textcolor{blue}{+0.238}6294859230536471454733356240557E+02\\
\midrule
\multirow{3}{*}{$\SK{4}{5}$, N=5000} &&
x=\textcolor{blue}{-0.10339135197610}47665823283445056424E+02\\
&& y=\textcolor{blue}{-0.11100311880346}95540840356160225629E+02\\
&& z=\textcolor{blue}{+0.23848779140889}35167622294039641605E+02\\
\bottomrule
\end{tabular}
\end{table}

The impact of the tolerance $\varepsilon$ on the accuracy of the approximate solution of the Chen system is also assessed. The error of the $x$-coordinate, in relation to the reference solution, is reported in Table~\ref{tab_error_x_vs_tolerance} as a function of the tolerance $\varepsilon$ with the structural schemes $\SK{3}{R}$ with $R=1,\ldots,5$. Due to the intricate nature of the chaotic system, $\varepsilon=10^{-8}$ is far from providing an acceptable approximate solution. For $\varepsilon=10^{-15}$, a rough approximation to the reference solution is obtained, but the expected convergence order of the structural scheme is not achieved. On the other side, $\varepsilon=10^{-25}$ enables recovery only of the sixth-order of convergence. In fact, we need to set $\varepsilon=10^{-30}$ to achieve the tenth-order of convergence, and, finally, $\varepsilon=10^{-50}$ to get all the expected convergence orders. In conclusion, for this challenging benchmark, the tolerance value is a critical parameter, together with quad- or octuple-precision, to achieve extreme convergence orders.

\begin{table}[H]
\centering
\caption{$x$-coordinate errors and convergence orders for the Chen system benchmark with $K=3$ and $R=1,\ldots,5$.}
\label{tab_error_x_vs_tolerance}
\resizebox{\textwidth}{!}{
\begin{tabular}{@{}r@{}r@{}r@{}r@{}rrrrrrrrrr@{}}
\toprule
R &\phantom{aaa} & N &\phantom{aaa} &
\multicolumn{2}{c}{$\varepsilon$=1E-8} &
\multicolumn{2}{c}{$\varepsilon$=1E-15} &
\multicolumn{2}{c}{$\varepsilon$=1E-25} &
\multicolumn{2}{c}{$\varepsilon$=1E-30} &
\multicolumn{2}{c}{$\varepsilon$=1E-50}\\
\cmidrule(lr){5-6}\cmidrule(lr){7-8}\cmidrule(lr){9-10}\cmidrule(lr){11-12}\cmidrule(lr){13-14}
&&&& err & ord & err & ord & err & ord & err & ord & err & ord\\
\midrule
\multirow{3}{*}{1} && 60,000 && 1.33E+01 & \emdash & 2.12E-02 & \emdash & 1.33E-02 & \emdash & 1.33E-02 & \emdash & 1.33E-02 & \emdash\\
&& 120,000 && 1.27E+01 & 0.1 & 7.46E-03 & 1.5 & 2.11E-04 & 6.0 & 2.11E-04 & 6.0 & 2.11E-04 & 6.0\\
&& 180,000 && 1.41E+01 & \emdash & 6.97E-03 & 0.2 & 1.85E-05 & 6.0 & 1.85E-05 & 6.0 & 1.85E-05 & 6.0\\
\midrule
\multirow{3}{*}{2} && 60,000 && 3.23E-01 & \emdash & 4.02E-03 & \emdash & 1.16E-11 & \emdash & 1.18E-11 & \emdash & 1.18E-11 & \emdash\\
&& 120,000 && 2.09E+01 & \emdash & 4.25E-03 & \emdash & 1.40E-13 & 6.4 & 1.15E-14 & 10.0 & 1.15E-14 & 10.0\\
&& 180,000 && 5.63E-02 & 14.6 & 1.91E-03 & 2.0 & 2.49E-13 & \emdash & 1.99E-16 & 10.0 & 2.00E-16 & 10.0\\
\midrule
\multirow{3}{*}{3} && 60,000 && 1.78E+01 & \emdash & 3.30E-03 & \emdash & 1.28E-13 & \emdash & 1.16E-16 & \emdash & 1.15E-16 & \emdash\\
&& 120,000 && 2.66E+01 & \emdash & 3.45E-03 & \emdash & 2.88E-13 & \emdash & 5.95E-19 & 7.6 & 2.81E-20 & 12.0\\
&& 180,000 && 2.62E+01 & 0.0 & 2.33E-03 & 1.0 & 2.75E-14 & 5.8 & 2.44E-19 & 2.2 & 2.16E-22 & 12.0\\
\midrule
\multirow{3}{*}{4} && 60,000 && 1.40E+01 & \emdash & 1.23E-04 & \emdash & 2.10E-13 & \emdash & 1.92E-18 & \emdash & 2.50E-24 & \emdash\\
&& 120,000 && 1.57E+00 & 3.2 & 2.88E-03 & \emdash & 2.15E-13 & \emdash & 1.12E-18 & 0.8 & 3.82E-29 & 16.0\\
&& 180,000 && 3.09E+00 & \emdash & 2.48E-03 & 0.4 & 1.03E-13 & 1.8 & 4.26E-19 & 2.4 & 5.82E-32 & 16.0\\
\midrule
\multirow{3}{*}{5} && 60,000 && 3.76E+00 & \emdash & 3.48E-03 & \emdash & 1.39E-13 & \emdash & 1.45E-18 & \emdash & 3.43E-29 & \emdash\\
&& 120,000 && 1.63E+01 & \emdash & 2.53E-03 & 0.5 & 8.36E-15 & 4.1 & 5.57E-19 & 1.4 & 1.31E-34 & 18.0\\
&& 180,000 && 4.32E+00 & 3.3 & 4.03E-03 & \emdash & 1.80E-13 & \emdash & 7.46E-19 & \emdash & 8.34E-38 & 18.2\\
\bottomrule
\end{tabular}
}
\end{table}

\subsection{Numerical efficiency}

Running time \textit{versus} accuracy characterises the computational effort to achieve a given solution quality. In that regard, higher-order accurate methods are highly effective at reducing the running time for a given solution quality, since higher-order convergence allows the use of coarser grids. Indeed, for smooth solutions with low variation at the final time, the computational savings are significant. On the other hand, when dealing with solutions involving abrupt time changes, a physical constraint on the time step may arise. For instance, if the solution exhibits a high-frequency phenomenon, a satisfactory spectral resolution can only be achieved with a sufficiently small time step. To assess the numerical efficiency of the structural schemes, the Chen system is analysed at the final time $T=15$ as a representative case, where we found that $N=5000$ is the minimum number of time grid points to guarantee an acceptable spectral resolution.

\subsubsection{Running time versus accuracy for the $\SK{K}{R}$ schemes}
First-derivative structural schemes $\SK{1}{R}$ with convergence order of $R+1$ involve only the function and the first derivative, and no additional physical equation is required, which is convenient when dealing with problems that do not support additional derivations. Note that one iteration of the fixed-point procedure evaluates $R$ times the physical equation and involves $R$ unknowns. On the other hand, high-order accurate structural schemes, such as $\SK{2}{R}$ and $\SK{3}{R}$, require additional physical equations and unknowns. Moreover, in these cases, one iteration of the fixed-point procedure evaluates $KR$ times the physical equations and involves $RK$ unknowns, hence, a larger system needs to be solved.

\begin{figure}[H]
\centering
\begin{tikzpicture}
\begin{loglogaxis}[
% --- Tamanho mais largo ---
width=13cm, % << alargar horizontalmente
height=4.5cm, % relação W:H ~ 2:1 (ajuste se quiser)
scale only axis, % não escala fontes, só a área do gráfico
xlabel={\tablepolice RT (s)},
ylabel={\tablepolice E},
grid=both,
legend style={
at={(0.5,-0.30)}, anchor=north, draw=none,
font=\scriptsize,
/tikz/column sep=4pt,
/tikz/row sep=1pt,
nodes={inner xsep=2pt, inner ysep=1pt},
},
legend image post style={xscale=0.75, yscale=0.75, mark options={scale=0.75}},
legend columns=5, % 1 (cabeçalho) + até 4 séries
legend cell align=left,
tick align=outside,
minor grid style={dashed,opacity=0.35},
major grid style={solid,opacity=0.5},
every axis plot/.append style={line width=1.3pt, mark options={solid, scale=1.0}},
xtick={1e1,1e2},
xticklabels={\tablepolice 1.0E+1,\tablepolice 1.0E+2},
ytick={1e-2,1e-3,1e-4},
yticklabels={\tablepolice 1.0E-2,\tablepolice 1.0E-3,\tablepolice 1.0E-4},
]

% =========================================================
% ORDEM 4 — Cabeçalho (Azul)
% =========================================================
\addlegendimage{color=oiBlue, line width=1.6pt, mark=none}
\addlegendentry{ord 4:}

% K=1, R=2 (Azul, marcador o)
\addplot+[color=oiBlue, mark=o]
coordinates {
(180.64, 0.00819)
(190.36, 0.00733)
(194.12, 0.00658)
(200.79, 0.00592)
};
\addlegendentry{$\SKfoot{1}{2}$}

% K=1, R=3 (Azul, marcador square*)
\addplot+[color=oiBlue, mark=square*]
coordinates {
(187.74, 0.00874)
(193.97, 0.00718)
(201.82, 0.00595)
(209.76, 0.00497)
};
\addlegendentry{$\SKfoot{1}{3}$}

% K=2, R=1 (Azul, marcador triangle*)
\addplot+[color=oiBlue, mark=triangle*]
coordinates {
(134.27, 0.0092)
(139.21, 0.00734)
(145.49, 0.00592)
(149.77, 0.00483)
};
\addlegendentry{$\SKfoot{2}{1}$}

% Preenchimento para fechar a linha (faltava 1)
\addlegendimage{empty legend}\addlegendentry{}

% =========================================================
% ORDEM 6 — Cabeçalho (Laranja)
% =========================================================
\addlegendimage{color=oiOrange, line width=1.6pt, mark=none}
\addlegendentry{ord 6:}

% K=1, R=4 (Laranja, marcador diamond*)
\addplot+[color=oiOrange, mark=diamond*]
coordinates {
(21.78, 0.00425)
(24.71, 0.00168)
(27.39, 0.000753)
(30.24, 0.000371)
};
\addlegendentry{$\SKfoot{1}{4}$}

% K=1, R=5 (Laranja, marcador star)
\addplot+[color=oiOrange, mark=star]
coordinates {
(20.94, 0.00449)
(24.57, 0.00151)
(28.27, 0.000599)
(31.89, 0.000269)
};
\addlegendentry{$\SKfoot{1}{5}$}

% K=2, R=2 (Laranja, marcador x)
\addplot+[color=oiOrange, mark=x]
coordinates {
(12.32, 0.00679)
(14.44, 0.00226)
(16.36, 0.000894)
(18.27, 0.000401)
};
\addlegendentry{$\SKfoot{2}{2}$}

% K=3, R=1 (Laranja, marcador +)
\addplot+[color=oiOrange, mark=+]
coordinates {
(12.49, 0.0024)
(14.69, 0.00063)
(16.96, 0.000211)
(18.96, 8.37E-05)
};
\addlegendentry{$\SKfoot{3}{1}$}

% =========================================================
% ORDEM 8 — Cabeçalho (Verde)
% =========================================================
\addlegendimage{color=oiGreen, line width=1.6pt, mark=none}
\addlegendentry{ord 8:}

% K=2, R=3 (Verde, marcador oplus*)
\addplot+[color=oiGreen, mark=oplus*]
coordinates {
(4.66, 0.00322)
(5.16, 0.00139)
(5.56, 0.00065)
(6.24, 0.000324)
};
\addlegendentry{$\SKfoot{2}{3}$}

% K=4, R=1 (Verde, marcador otimes*)
\addplot+[color=oiGreen, mark=otimes*]
coordinates {
(3.80, 0.0034)
(4.18, 0.00147)
(4.54, 0.000686)
(4.92, 0.000342)
};
\addlegendentry{$\SKfoot{4}{1}$}

% Preenchimentos para fechar a linha (faltavam 2)
\addlegendimage{empty legend}\addlegendentry{}
\addlegendimage{empty legend}\addlegendentry{}

% =========================================================
% ORDEM 10 — Cabeçalho (Vermelho)
% =========================================================
\addlegendimage{color=oiRed, line width=1.6pt, mark=none}
\addlegendentry{ord 10:}

% K=2, R=4 (Vermelho, marcador square*)
\addplot+[color=oiRed, mark=square*]
coordinates {
(3.25, 0.00303)
(3.64, 0.000649)
(3.96, 0.000171)
(4.24, 5.28E-05)
};
\addlegendentry{$\SKfoot{2}{4}$}

% K=3, R=2 (Vermelho, marcador triangle*)
\addplot+[color=oiRed, mark=triangle*]
coordinates {
(2.07, 0.00669)
(2.49, 0.000714)
(2.87, 0.000115)
(3.19, 2.47E-05)
};
\addlegendentry{$\SKfoot{3}{2}$}

% Preenchimentos para fechar a linha (faltavam 2)
\addlegendimage{empty legend}\addlegendentry{}
\addlegendimage{empty legend}\addlegendentry{}
\end{loglogaxis}
\end{tikzpicture}\\
(a) Structural schemes with convergence orders of 4, 6, 8, and 10.
\vskip 2em
\begin{tikzpicture}
\begin{loglogaxis}[
xlabel={\tablepolice RT (s)},
ylabel={\tablepolice E},
% --- Tamanho mais largo ---
width=13cm, % << alargar horizontalmente
height=4.5cm, % relação W:H ~ 2:1 (ajuste se quiser)
scale only axis, % não escala fontes, só a área do gráfico
grid=both,
legend style={
at={(0.5,-0.30)}, anchor=north, draw=none,
font=\scriptsize,
/tikz/column sep=4pt,
/tikz/row sep=1pt,
nodes={inner xsep=2pt, inner ysep=1pt},
},
legend image post style={xscale=0.75, yscale=0.75, mark options={scale=0.75}},
legend columns=5, % 1 (cabeçalho) + até 4 séries
legend cell align=left,
tick align=outside,
minor grid style={dashed,opacity=0.35},
major grid style={solid,opacity=0.5},
cycle list name=okabe-ito,
every axis plot/.append style={line width=1.2pt, mark options={solid, scale=1.0}},
% --- ADIÇÕES PARA OS TICKS EM 10^1 E 10^2 ---
xtick={1e1,1e2}, % posições dos ticks (10 e 100)
xticklabels={\tablepolice 1.0E+1,\tablepolice 1.0E+2}, % rótulos como potências de 10
ytick={1e-8,1e-12,1e-16,1e-20},
yticklabels={\tablepolice 1.0E-8,\tablepolice 1.0E-12,\tablepolice 1.0E-16,\tablepolice 1.0E-20},
minor x tick num=9, % (opcional) 9 ticks menores entre potências
xmin=8, xmax=120, % (opcional) fixa o range para [10,100]
]

% --- K=1, R=10 ---
\addplot+ coordinates {(18.28, 4.39E-08) (21.72, 3.02E-09) (25.10, 3.39E-10) (28.76, 5.34E-11)};
\addlegendentry{$\SKfoot{1}{10}$}

% --- K=1, R=11 ---
\addplot+ coordinates {(20.54, 7.42E-09) (24.78, 5.14E-10) (29.17, 5.8E-11) (32.76, 9.14E-12)};
\addlegendentry{$\SKfoot{1}{11}$}

% --- K=2, R=5 ---
\addplot+ coordinates {(16.06, 3.52E-11) (21.73, 2.72E-13) (27.06, 8.63E-15) (32.37, 5.93E-16)};
\addlegendentry{$\SKfoot{2}{5}$}

% --- K=3, R=3 ---
\addplot+ coordinates {(15.23, 1.5E-13) (21.00, 1.16E-15) (26.30, 3.66E-17) (31.48, 2.52E-18)};
\addlegendentry{$\SKfoot{3}{3}$}

% --- K=4, R=2 ---
\addplot+ coordinates {(18.19, 5.02E-15) (25.06, 3.87E-17) (31.36, 1.23E-18) (37.07, 8.42E-20)};
\addlegendentry{$\SKfoot{4}{2}$}

\end{loglogaxis}
\end{tikzpicture}\\
(b) Structural schemes with convergence order of 12.
\caption{Running time ($RT$) \textit{versus} error ($E$) for various structural schemes.}
\label{fig::accuracy_vs_running-time}
\end{figure}

Figure~\ref{fig::accuracy_vs_running-time}(a) illustrates the accuracy ($E$, on the vertical axis) as a function of the running time ($RT$, on the horizontal axis) for various structural schemes with convergence orders of 4, 6, 8, and 10. We clearly distinguish three clusters for the fourth-order accurate schemes (right), the sixth-order accurate schemes (middle), and the eighth-/tenth-order accurate schemes (left). For a target accuracy of $E=10^{-2}$, it takes around $200$s for the fourth-order accurate structural schemes, $20$s for the sixth-order accurate structural schemes, and only $2$s for the tenth-order accurate structural schemes.

Figure~\ref{fig::accuracy_vs_running-time}(b) illustrates the differences between the possible combinations of parameters $K$ and $R$ that achieve a $12^{\text{th}}$-order of convergence. The lines are parallel, confirming that all the structural schemes have $12^{\text{th}}$-order convergence. On the other hand, increasing $K$ rather than $R$ to achieve the same convergence order is more computationally efficient, since it results in a drastic reduction in the multiplicative constant associated with the asymptotic regime. We stress that it would be difficult to take advantage of such a property if the analytical high-order derivatives of the physical equations turn out to be difficult or even impossible.

Assuming that the approximate solution has reached the asymptotic regime of the relation running time \textit{versus} accuracy, we provide some extrapolations of the running time in Table~\ref{tab:tempos-estimados} to reach a given accuracy for several structural schemes. For example, to achieve an error above $E=10^{-8}$, the structural scheme $\SK{1}{2}$ with fourth-order of convergence requires about $3000$ more time than the structural scheme $\SK{3}{2}$. Such a ratio corresponds to a substantial difference between, for instance, $250$s and one week of computation.

\begin{table}[H]
\centering
\caption{Extrapolation time for various structural schemes.}
\label{tab:tempos-estimados}
\begin{tabular}{@{}llrrr@{}}
\toprule
ord & scheme & E=10-3 & E=10-4 & E=10-8\\
\midrule
\multirow{3}{*}{4} &
$\SK{1}{2}$ & 357.11 & 749.16 & 1,4509.68\\
& $\SK{1}{3}$ & 287.83 & 454.52 & 2826.47\\
& $\SK{2}{1}$ & 197.40 & 294.63 & 1462.14\\
\midrule
\multirow{4}{*}{6} &
$\SK{1}{4}$ & 26.44 & 35.99 & 123.63\\
& $\SK{1}{5}$ & 26.18 & 36.95 & 146.56\\
& $\SK{2}{2}$ & 16.11 & 22.19 & 79.80\\
& $\SK{3}{1}$ & 13.92 & 18.56 & 58.73\\
\midrule
\multirow{2}{*}{8} &
$\SK{2}{3}$ & 5.36 & 7.16 & 22.67\\
& $\SK{4}{1}$ & 4.36 & 5.64 & 15.84\\
\midrule
\multirow{2}{*}{10} &
$\SK{2}{4}$ & 3.51 & 4.09 & 7.50\\
& $\SK{3}{2}$ & 2.41 & 2.88 & 5.89\\
\bottomrule
\end{tabular}
\end{table}

\rb{
\subsection{Comparison with the Collocation method}
Among the many existing numerical techniques, we restrict our comparison to the collocation method for several reasons. First, both the structural method and the collocation method can be interpreted as particular cases of RK methods and can be constructed to achieve arbitrary order of accuracy. Second, we do not consider multistep methods, which we regard as too different from the structural method, since our approach does not rely on backlog data. Finally, techniques such as deferred correction methods are based on a very different paradigm, increasing the order through successive improvements.
\subsubsection{Algorithm}
To provide a fair comparison between the structural and collocation methods, we use the same fixed-point technique as that proposed in Section~\ref{sec::numerical_solver}. We denote by $A=(a_{ij}){i,j=1}^s$, $b=(b_i){i=1}^s$, and $c=(c_i)_{i=1}^s$ the elements of the Butcher tableau, and by $Z_n$ an approximation of the solution at time $t_n$.
\\
We construct a sequence $(Z^{[k]}_{n+1})$, $k=0,1,\ldots$, according to the following three stages:
\begin{enumerate}
\item Initialization $Z^{[0]}_{n,i}=Z_n+c_i\Delta t D_{n}$, $D^{[0]}_{n,i}=f(t_n+c_i\Delta t,Z^{[0]}_{n,i})$.
\item Loop $Z^{[k+1]}_{n,i}=Z_n+\Delta t \sum_{j=1}^s a_{i,j} D^{[k]}_{n,j}$, $D^{[k+1]}_{n,i}=f(t_n+c_i\Delta t,Z^{[k+1]}_{n,i})$.
\item Test $Z^{[k+1]}_{n}=Z_n+\Delta t\sum_{i=1}^s b_i D^{[k+1]}_{n,i}$, stop if $|Z^{[k+1]}_{n}-Z^{[k]}_{n}|\leq \varepsilon$ else goto 2.
\item Termination $Z_{n+1}=Z^{[k+1]}_{n}$.
\end{enumerate}
where $Z^{[k]}_{n,i}$, $D^{[k]}_{n,j}$ are, respectively, the values and derivatives at the intermediate stage $t_{n,i}=t_n+c_i\Delta t$. We have implemented the fourth-order {\tt CL4} up to the 12th-order method {\tt CL12} and report the Butcher tableau in \ref{app::colocation}.
\subsubsection{Running-time versus accuracy}
We use the Chen dynamical system as a benchmark since small errors will quickly results into large deviations at the final time.
All the code were implemented in {\tt C++} using the {\tt qd} library to enable the quad- and octa-precision. Indeed, the approximations of a chaotic system require very high-precision. The {\tt qd } library is quite efficient since a quad-precision run lasts 3 times the double precision run and the octa-precision run lasts 14 times the quad-precision run.  For orders $4$, $6$ and $8$, quad-precision is adequate but higher orders $10$, $12$ or more require the full octa-precision.
\begin{figure}[ht]
    \centering
    \includegraphics[width=0.48\linewidth, clip, trim=2cm 1cm 3cm 2cm]{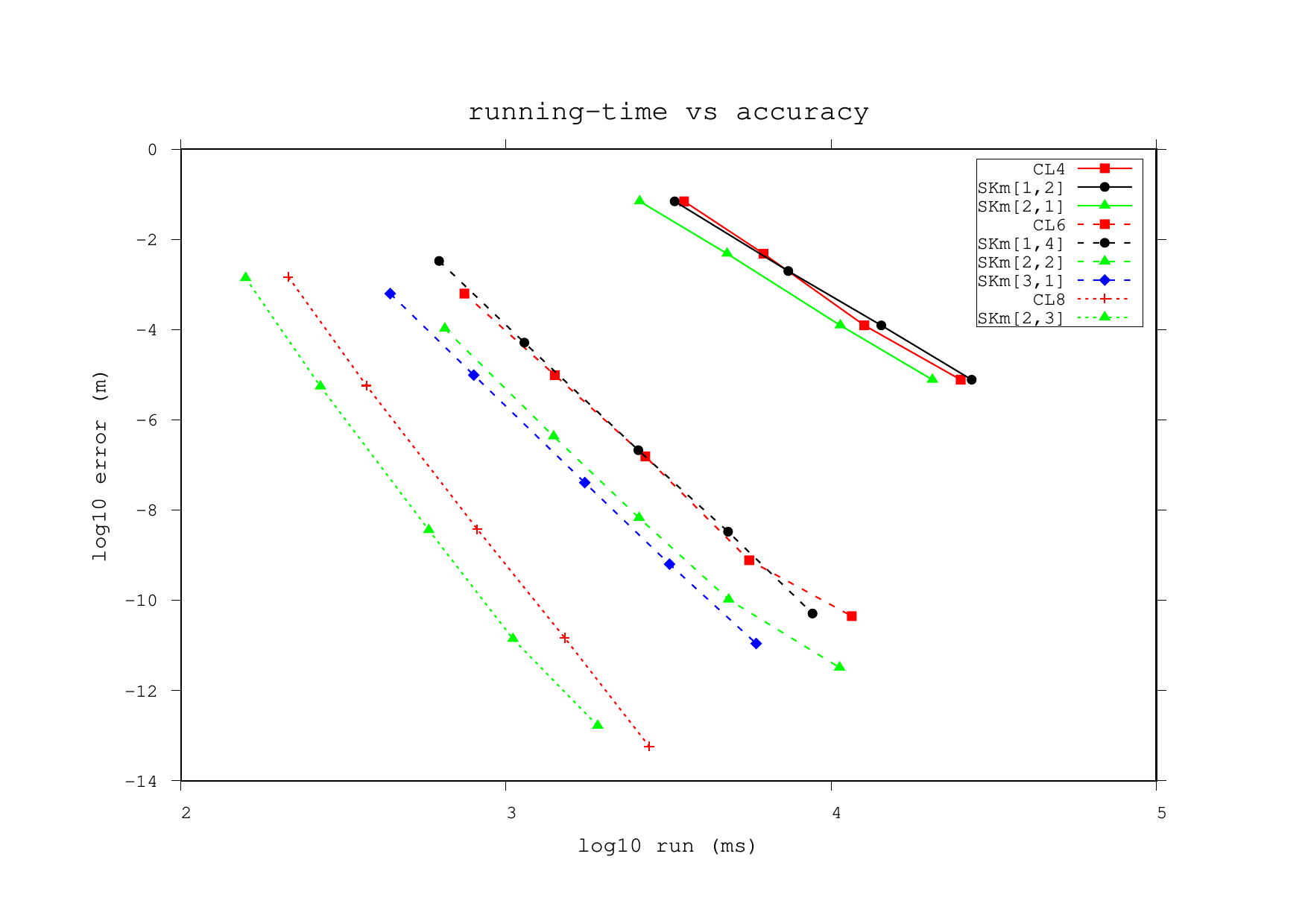}\hskip 1em
    \includegraphics[width=0.48\linewidth, clip, trim=2cm 1cm 3cm 2cm]{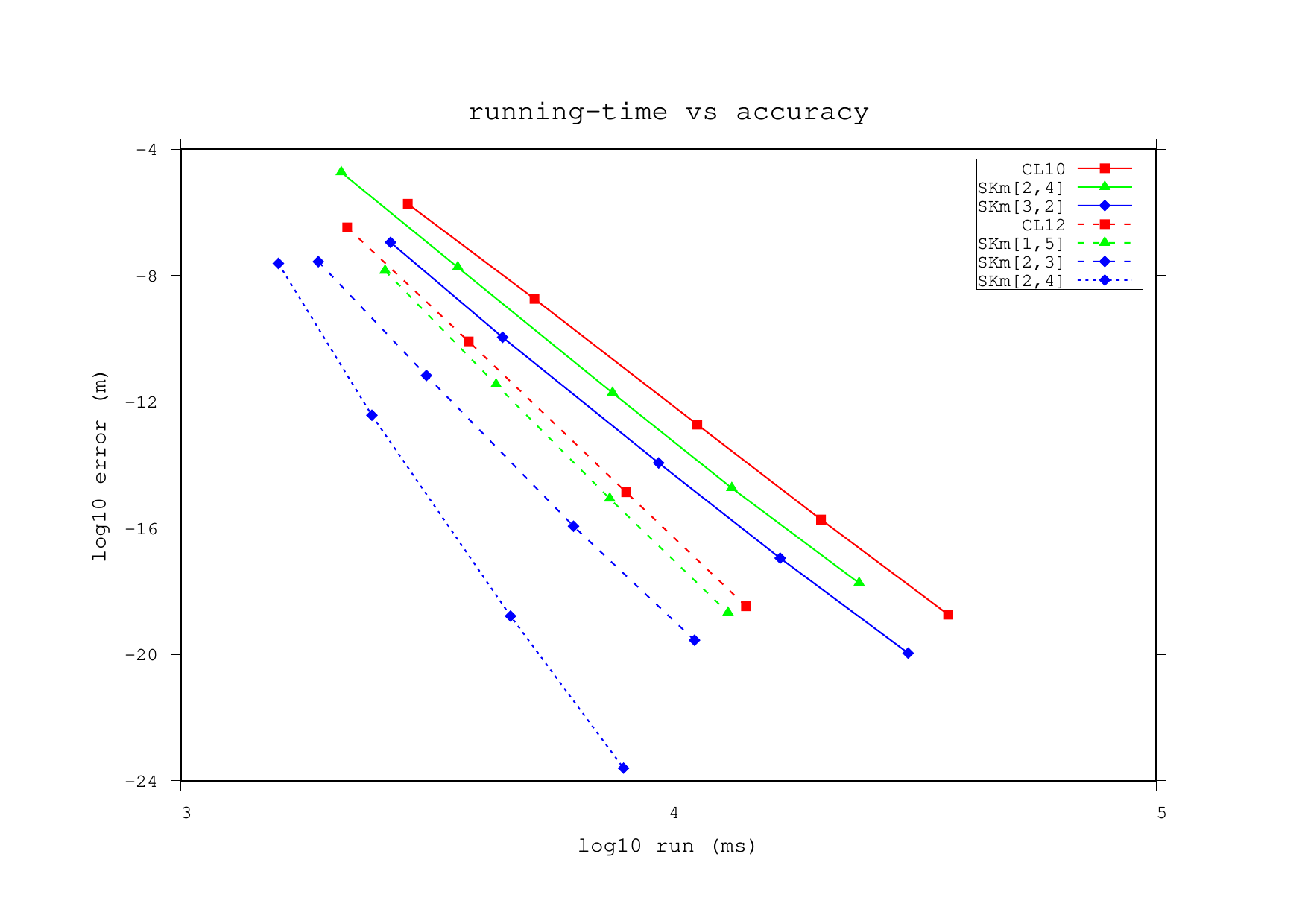}
    \caption{running time versus accuracy. Left panel: Order 4,6,8 using the quad-precision. Right panel: Order 10,12,14 using the octa-precision.}
    \label{fig:running-time_vs_accuracy}
\end{figure}
\\
We plot in Figure \ref{fig:running-time_vs_accuracy} left panel the accuracy in function of the running time for the $4$, $6$ and $8$ order using the quad-precision, while we report the higher orders $10$, $12$ and $16$ using the full octa-precision. Structural method of type $K=1$ (only one derivative) provides very similar results than the collocation method. By introducing the compact version using the second or the thid derivative as additional unknowns, we strongly improve the running-time for the same accuracy regarding to the equivalent collocation methods.
}

\begin{rem}
\rb{
Compact schemes usually provide better spectral resolution, {\it i.e.} the ability to preserve high frequency signal than the non-compact method. We expect that the $\SK{2}{R}$ and $\SK{3}{R}$ enjoy such a property in comparison to the collocation schemes. $\blacksquare$}
\end{rem}

% --- Part II: Numerical analysis, accuracy, stability, spectral resolution
\vskip 2em
\centerline{\includegraphics[width=0.30\textwidth]{vintage_text_separator.png}}
\centerline{\Large \sl Part II: Numerical analysis, accuracy, stability, spectral resolution}
\vskip 0.5em

{\sl \small The second part is dedicated to the numerical analysis of the structural method, namely the convergence, the linear stability. We also introduce a concept of spectral resolution adapted to the new method to quantify the ability to obtain sub-grid scale information. We first prove the convergence of the iterative method for the non-linear problem in section 7 while the consistency errors and convergence orders are established in Section~8, where we determine the convergence order $p$ as a function of $K$ and $R$. Section~9 is dedicated to stability, where our method is shown to be unconditionally stable across a wide range of configurations. Appendix C gives the technical detail about the stability of the schemes we propose in the section. We address the spectral resolution issue in Section~10 and focus on the ability of our schemes to provide very high resolution, enabling us to recover sub-scale information, that is, variations between successive time steps.}

\section{Convergence of the numerical solver}

The main result of this section establishes that, under a CFL-like condition linked to the Lipschitz constants of the physical equations and the coefficients of the structural equations, the proposed iterative procedure for numerical approximation converges.

\begin{lemma}
\label{lemma_bounded_Lipschitz}
Let $B(M_0)=[-M_0,M_0]$ be the closed ball of radius $M_0>0$ centred at the origin and assume that the function $(z,t)\to f(z,t)$ is a $C^{K+1}(\mathbb{R}\times[0,T])$ function. Then, for any $k\bint{1,K}$, the functions $(z_0,\ldots,z_{k-1},t)\to z_k=\ddf{k-1}{z_0,\ldots,z_{k-1},t}$ are bounded by a constant $M_k$.
Moreover, we have a $L_k$-Lipschitz property uniform in time $t\in[0,T]$ on the compact set $B(M_0)\times\cdots\times B(M_{k-1})$ given by
\begin{equation}\label{global_lipschitz_inequality}
|z_k-z_k'|\leqslant L_k\left(\sum_{\kappa=0}^{k-1}|z_\kappa-z_\kappa'|\right).
\end{equation}
\end{lemma}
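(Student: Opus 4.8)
The plan is to proceed by induction on $k$, simultaneously establishing the boundedness and the Lipschitz estimate, since the two properties feed into each other: the Lipschitz bound at level $k$ will require boundedness at all lower levels, and boundedness at level $k+1$ will be read off from the defining recursion applied to bounded arguments. The base case $k=1$ is immediate: $z_1 = \ddf{0}{t,z_0} = f(z_0,t)$ is continuous on the compact set $[0,T]\times B(M_0)$, hence bounded by some $M_1$, and it is Lipschitz in $z_0$ there because $\partial_z f$ is continuous, hence bounded, on that compact set, giving $|z_1-z_1'|\leqslant L_1|z_0-z_0'|$.

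For the inductive step I would exploit the structural observation recorded in the first Remark of the excerpt: the map $(t,z_0,\ldots,z_k)\mapsto \ddf{k}{t,z_0,\ldots,z_k}$ is a polynomial in $z_1,\ldots,z_k$ whose coefficients are partial derivatives of $f$ in $t$ and $z_0$ only. Concretely, the recursion $z_{k+1}=\frac{d}{dt}\,\ddf{k}{\cdots}$ expands, via the chain rule, into a finite sum of terms each of which is a product of a partial derivative of $f$ (evaluated at $(z_0,t)$) with a monomial in $z_1,\ldots,z_k$. Assuming inductively that $z_1,\ldots,z_k$ range over the compact balls $B(M_1),\ldots,B(M_k)$ and that $f\in C^{K+1}$, every partial derivative appearing is continuous hence bounded on $[0,T]\times B(M_0)$, and every monomial is bounded on the product of balls; summing the finitely many bounded terms yields the constant $M_{k+1}$.

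The Lipschitz estimate at level $k+1$ follows from the same explicit expansion by a standard telescoping argument. Each summand is a product of factors, each factor being either a partial derivative of $f$ evaluated at $(z_0,t)$ — which is itself Lipschitz in $z_0$ because its own derivative is a continuous, hence bounded, partial derivative of $f$ of order at most $K+1$ — or one of the variables $z_j$ with $j\leqslant k$. To bound the difference of two such products at $(z_0,\ldots,z_k)$ and $(z_0',\ldots,z_k')$, I would use the elementary identity that a difference of products of bounded factors is dominated by the sum of the differences of corresponding factors times products of bounds, so that the whole expression is controlled by a constant times $\sum_{\kappa=0}^{k}|z_\kappa-z_\kappa'|$, which is exactly inequality \eqref{global_lipschitz_inequality} with an appropriate $L_{k+1}$. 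The uniformity in time is automatic because all bounds come from maxima of continuous functions over the compact set that already includes the time interval $[0,T]$.

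The main obstacle, and the only place demanding care rather than routine bookkeeping, is making the chain-rule expansion of $\frac{d}{dt}\,\ddf{k}{\cdots}$ precise enough to justify both claims uniformly: one must be explicit that differentiating in $t$ replaces each $z_j$ by $z_{j+1}$ (via $\phi^{(j)}{}'=\phi^{(j+1)}$) and each $\partial^\alpha f$ by partial derivatives one order higher, so that the order of $f$-derivatives invoked never exceeds $k+1\leqslant K+1$ — which is precisely why the hypothesis $f\in C^{K+1}$ is exactly what is needed. I would state the expansion schematically rather than writing out all terms, then invoke compactness and the product-difference identity to close the induction without grinding through the combinatorics of the individual coefficients.
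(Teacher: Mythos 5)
Your proposal is correct and follows essentially the same route as the paper's own proof: induction on $k$, with boundedness obtained from compactness applied to the polynomial-in-$(z_1,\ldots,z_k)$ structure of $\ddf{k}{t,z_0,\ldots,z_k}$ whose coefficients are partial derivatives of $f$ in $(z_0,t)$, and the Lipschitz estimate obtained from the boundedness of those coefficients, their $z_0$-derivatives, and the variables themselves. Your explicit telescoping of product differences and the chain-rule bookkeeping showing that only derivatives of order at most $k+1\leqslant K+1$ appear simply spell out details the paper leaves implicit.
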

\begin{proof}
We prove the property by induction. Since $(z_0,t)$ belongs to the compact set $B(M_0)\times[0,T]$, the continuity of $f$ means that $z_1=\ddf{0}{t,z_0}$ is bounded and belongs to a compact set, which we include inside a closed ball $B(M_1)$ centered at the origin for $M_1$ large enough. On the other hand, for two values $z_0,z_0'\in B(M_0)$, one has
\[
\ddf{0}{z_0,t}-\ddf{0}{z_0',t}=f(z_0,t)-f(z_0',t)=\partial_zf\big(\theta z_0+(1-\theta)z_0',t\big)(z_0-z_0')
\]
with $\theta\in[0,1]$. Note that the intermediate point belongs to the ball $B(M_0)$. Denoting
\[
L_1=\max^{\zeta\in B(M_0)}_{t\in[0,T]}\partial_z f(\zeta,t),
\]
we obtain the Lipschitz inequality, uniform in time
\[
|z_1-z_1'|=\big|\ddf{0}{z_0,t}-\ddf{0}{z_0',t}\big|\leqslant L_1|z_0-z_0'|.
\]

Assuming that the properties hold for $k<K$, we now prove them for $k+1$. First, we recall that $z_{k+1}=\ddf{k}{z_0,z_1,\ldots,z_k,t}$ is made up of the sum of polynomial functions with respect to variables $z_1,\ldots,z_k$ multiplied with partial derivatives of $f(z,t)$ in $z_0$ and $t$ of order lower than $k$. Since the partial derivatives depend only on $z_0$ and $t$, they are uniformly bounded in $ B(M_0)\times[0,T]$, while the polynomials in $z_1,\ldots,z_{k}$ are also bounded from the induction assumption. We then see that $z_{k+1}$ is bounded and belongs to a closed ball $B(M_{k+1})$ with $M_{k+1}$ large enough.
On the other hand, the Lipschitz property derives from the fact that $\ddf{k}{z_0,z_1,\ldots,z_k,t}$ is polynomial in $z_1,\ldots,z_k$ with bounded values, while the $z$-derivative of the partial derivatives of order $k$ remains bounded in $B(M_0)\times[0,T]$. Therefore, we have a Lipschitz operator uniform in time, and the property holds with a Lipschitz constant $L_{k+1}$.
\end{proof}

From Lemma~\ref{lemma_bounded_Lipschitz}, we deduce a Lipschitz property regarding the sole variable $z_0$.
\begin{lemma}\label{lemma_bounded_Lipschitz_z_0}
Under the assumptions of Lemma~\ref{lemma_bounded_Lipschitz}, let $z_0\in B(M_0)$ and $t\in[0,T]$, and define by induction
\[
z_k=\ddf{k-1}{z_0,\ldots,z_{k-1},t},\quad k\bint{1,K}.
\]
Then, for any $k\bint{1,K}$ and $z_0,z_0'\in B(M_0)$, we have the inequality
\begin{equation}\label{Lipschitz_z_0}
|z_k-z_k'|\leqslant\widehat{L}_k|z_0-z_0'|,
\end{equation}
with $\widehat{L}_k$ independent of $z_0$, $z_0'$, and time $t\in[0,T]$.
\end{lemma}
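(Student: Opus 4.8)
The plan is to run a straightforward induction on $k$ that converts the multivariable Lipschitz estimate \eqref{global_lipschitz_inequality} of Lemma \ref{lemma_bounded_Lipschitz} into the single-variable estimate \eqref{Lipschitz_z_0}. The crucial observation is that in the present setting the intermediate values $z_1,\ldots,z_{k-1}$ are no longer free arguments but are generated from $z_0$ (and $t$) through the cascade $z_\kappa=\ddf{\kappa-1}{t,z_0,\ldots,z_{\kappa-1}}$. Hence each difference $|z_\kappa-z_\kappa'|$ can itself be bounded, recursively, by $|z_0-z_0'|$, and substituting these bounds into \eqref{global_lipschitz_inequality} collapses the whole sum onto the single quantity $|z_0-z_0'|$.

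First I would set $\widehat L_0=1$ and dispose of the base case $k=1$: since $z_1=f(t,z_0)$, the bound $|z_1-z_1'|\leqslant L_1|z_0-z_0'|$ is exactly the $z_0$-Lipschitz estimate already established at the end of the proof of Lemma \ref{lemma_bounded_Lipschitz}, so \eqref{Lipschitz_z_0} holds with $\widehat L_1=L_1$. For the inductive step, suppose \eqref{Lipschitz_z_0} holds for all $\kappa\bint{0,k}$ with constants $\widehat L_\kappa$ independent of $z_0,z_0',t$. Because $z_0\in B(M_0)$, the iterates $z_1,\ldots,z_k$ produced by the cascade lie in the respective compact balls $B(M_1),\ldots,B(M_k)$ by the boundedness part of Lemma \ref{lemma_bounded_Lipschitz}, so its hypotheses are met and inequality \eqref{global_lipschitz_inequality} applies to $z_{k+1}$. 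I would then chain the estimates
\[
|z_{k+1}-z_{k+1}'|\leqslant L_{k+1}\sum_{\kappa=0}^{k}|z_\kappa-z_\kappa'|\leqslant L_{k+1}\Big(\sum_{\kappa=0}^{k}\widehat L_\kappa\Big)|z_0-z_0'|,
\]
which yields \eqref{Lipschitz_z_0} for $k+1$ with $\widehat L_{k+1}=L_{k+1}\sum_{\kappa=0}^{k}\widehat L_\kappa$. Since every $L_{k+1}$ and every $\widehat L_\kappa$ is independent of $z_0,z_0'$ and of $t$, so is $\widehat L_{k+1}$, which closes the induction.

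There is essentially no serious obstacle here; the single point requiring care is verifying that the recursively generated iterates remain in the compact sets $B(M_\kappa)$, so that \eqref{global_lipschitz_inequality} is legitimately invoked. This is precisely guaranteed by the boundedness already proved in Lemma \ref{lemma_bounded_Lipschitz}, and the remainder is a routine telescoping of the uniform constants.
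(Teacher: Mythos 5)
Your proof is correct and follows essentially the same route as the paper: an induction on $k$ that bounds each $|z_\kappa-z_\kappa'|$ by $\widehat L_\kappa|z_0-z_0'|$ and feeds these into inequality \eqref{global_lipschitz_inequality}, producing the identical recursion $\widehat L_{k+1}=L_{k+1}\bigl(1+\sum_{\kappa=1}^{k}\widehat L_\kappa\bigr)$ (your convention $\widehat L_0=1$ merely absorbs the leading term into the sum). Your explicit check that the iterates stay in the balls $B(M_\kappa)$, so that \eqref{global_lipschitz_inequality} is legitimately invoked, is a point the paper leaves implicit but does not change the argument.
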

\begin{proof}
For $k=1$ we take $\widehat{L}_1=L_1$. For $k=2$, we note
\[
|z_2-z_2'|\leqslant L_2(|z_0-z_0'|+|z_1-z_1'|)\leqslant L_2(1+\widehat{L}_1)|z_0-z_0'|,
\]
and the inequality holds with $\widehat{L}_2=L_2(1+\widehat{L}_1)$.

The general case is proven by induction. From inequality~\eqref{global_lipschitz_inequality} we have
\[
|z_k-z_k'|\leqslant L_k\left(|z_0-z_0'|+\sum_{\kappa=1}^{k-1}|z_\kappa-z_\kappa'|\right)\leqslant L_k\left(|z_0-z_0'|+\sum_{\kappa=1}^{k-1}\widehat{L}_\kappa|z_0-z_0'|\right)\leqslant\widehat{L}_k|z_0-z_0'|
\]
with
\[
\widehat{L}_k=L_k\left(1+\sum_{\kappa=1}^{k-1}\widehat{L}_\kappa\right),
\]
and inequality~\eqref{Lipschitz_z_0} holds.
\end{proof}
\begin{cor}
Let $\Vert\cdot\Vert_{\infty}$ be the maximum norm and
\[
\Phi_n^{(k)}=\ddf{k-1}{\Phi_n^{(0)},\ldots,\Phi_n^{(k-1)},T_n},
\quad
{\Phi'}_n^{(k)}=\ddf{k-1}{{\Phi'}_n^{(0)},\ldots,{\Phi'}_n^{(k-1)},T_n},
\quad k\bint{1,K}.
\]
Then, under the assumptions of Lemma~\ref{lemma_bounded_Lipschitz}, we have for any $k\bint{1,K}$ the estimate
\begin{equation}\label{Vectorial_Lipschitz_0}
\big\Vert\Phi_n^{(k)}-{\Phi'}_n^{(k)}\big\Vert_{\infty}\leqslant\widehat{L}_k\big\Vert\Phi_n^{(0)}-{\Phi'}_n^{(0)}\big\Vert_{\infty}.
\end{equation}
\end{cor}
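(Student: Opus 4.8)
The plan is to lift the scalar Lipschitz estimate of Lemma~\ref{lemma_bounded_Lipschitz_z_0} to the vectorial setting, exploiting that the vectorial operators $\ddf{k-1}{\cdot}$ act entry-by-entry and that the maximum norm is compatible with component-wise inequalities. First I would fix the block and write out the $r$-th components: by the component-wise definition, for each $r\bint{1,R}$ the scalar $\phi^{(k)}_{n+r}$ is produced from $\phi^{(0)}_{n+r}$ through exactly the recursive chain $z_\kappa=\ddf{\kappa-1}{t_{n+r},z_0,\ldots,z_{\kappa-1}}$ evaluated at the single node $t_{n+r}$. Hence Lemma~\ref{lemma_bounded_Lipschitz_z_0} applies verbatim to each component, with the constant $\widehat L_k$ which is uniform in time and therefore independent of $r$, giving
\[
\big|\phi^{(k)}_{n+r}-{\phi'}^{(k)}_{n+r}\big|\leqslant \widehat L_k\,\big|\phi^{(0)}_{n+r}-{\phi'}^{(0)}_{n+r}\big|,\qquad r\bint{1,R}.
\]

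Next I would take the maximum over $r$. Writing $a_r=\big|\phi^{(k)}_{n+r}-{\phi'}^{(k)}_{n+r}\big|$ and $b_r=\big|\phi^{(0)}_{n+r}-{\phi'}^{(0)}_{n+r}\big|$, the bound $a_r\leqslant \widehat L_k\,b_r$ valid for every $r$ yields $\max_r a_r\leqslant \widehat L_k\max_r b_r$ since $\widehat L_k\geqslant 0$, that is
\[
\big\Vert\Phi^{(k)}_n-{\Phi'}^{(k)}_n\big\Vert\leqslant \widehat L_k\,\big\Vert\Phi^{(0)}_n-{\Phi'}^{(0)}_n\big\Vert.
\]
This already establishes the estimate with the constant $\widehat L_k$ of Lemma~\ref{lemma_bounded_Lipschitz_z_0}. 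To match the form $\widehat L^k$ in the statement, it suffices to set $\widehat L=\max_{1\leqslant k\leqslant K}\widehat L_k^{1/k}$, or any $\widehat L$ dominating all the $\widehat L_k^{1/k}$, so that $\widehat L_k\leqslant \widehat L^k$ for every $k\bint{1,K}$ and inequality~\eqref{Vectorial_Lipschitz_0} follows.

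The argument contains no genuine obstacle: the entire content is that the operators are applied entry-by-entry at a fixed time node, so the scalar result transfers directly through the maximum norm. The only point requiring a word of care is the implicit hypothesis that the base components $\phi^{(0)}_{n+r}$ and ${\phi'}^{(0)}_{n+r}$ lie in the ball $B(M_0)$, so that Lemma~\ref{lemma_bounded_Lipschitz_z_0} is applicable; this is inherited from the assumptions of Lemma~\ref{lemma_bounded_Lipschitz} under which the corollary is stated. The minor bookkeeping of reconciling $\widehat L_k$ with $\widehat L^k$ is handled by the choice of $\widehat L$ above.
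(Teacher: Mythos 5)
Your proof is correct and follows essentially the same route as the paper: both lift the scalar Lipschitz bound of Lemma~\ref{lemma_bounded_Lipschitz_z_0} component-by-component through the maximum norm (the paper picks the index $r_0$ attaining the norm on the left and bounds that single component by $\widehat L_k$ times the max on the right, while you bound every component and take the maximum, which is the same argument). Note that the paper's own proof concludes with the constant $\widehat L_k$, so the exponent in $\widehat L^k$ in the statement is evidently a typo for $\widehat L_k$; your device of setting $\widehat L=\max_{1\leqslant k\leqslant K}\widehat L_k^{1/k}$ is a harmless way to reconcile the notation, and your remark that the base components must lie in $B(M_0)$ correctly identifies the implicit hypothesis.
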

\begin{proof}
There exists $r_0$ such that $\big\Vert\Phi_n^{(k)}-{\Phi'}_n^{(k)}\big\Vert_{\infty}=\big\vert\phi_{n+r_0}^{(k)}-{\phi'}_{n+r_0}^{(k)}\big\vert$. Then, applying inequality~\eqref{Lipschitz_z_0}, we deduce
\[
\big\Vert\Phi_n^{(k)}-{\Phi'}_n^{(k)}\big\Vert_{\infty}=\big\vert\phi_{n+r_0}^{(k)}-{\phi'}_{n+r_0}^{(k)}\big\vert
\leqslant\widehat{L}_k\big\vert\phi_{n+r_0}^{(0)}-{\phi'}_{n+r_0}^{(0)}\big\vert
\leqslant\widehat{L}_k\big\Vert\Phi_{n}^{(0)}-{\Phi'}_{n}^{(0)}\big\Vert_{\infty}.
\]
\end{proof}
\begin{rem}
Since we are dealing with finite-dimensional spaces, all norms are equivalent, and the inequality~\eqref{Vectorial_Lipschitz_0} holds for any norm with an adapted coefficient $\widehat{L}_k$ depending on $R$.
\hfill\smallBB
\end{rem}

\subsection{Convergence of the iterative procedure for structural scheme $\SK{1}{R}$}

Following Lemma~\ref{lemma_bounded_Lipschitz}, we assume that the approximations in vectors $\Phi^{(0)}_{n}[\ell]$, $\ell\geqslant 0$ remain bounded in the ball $B(M_0)$. Hence, the vectors $\Phi^{(1)}_{n}[\ell]$ are bounded and the Lipschitz inequality~\eqref{Vectorial_Lipschitz_0} holds between two successive approximations, that is,
\[
\big\Vert\Phi^{(1)}_{n}[\ell+1]-\Phi^{(1)}_{n}[\ell]\big\Vert\leqslant\widehat{L}_1
\big\Vert\Phi^{(0)}_{n}[\ell+1]-\Phi^{(0)}_{n}[\ell]\big\Vert.
\]

\begin{prop}
Let $\Delta t$ be small enough such that
\begin{equation}
\label{CFL_K=1_fix-point}
\Delta t\widehat{L}_1\left\Vert\boldsymbol{B}^{(1)}\right\Vert<1,\,\boldsymbol{B}^{(1)}=\Big(\boldsymbol{A}^{(0)}\Big)^{-1}\boldsymbol{A}^{(1)}.
\end{equation}
Then, the fixed-point is a contraction, and the successive numerical approximations converge to the solution of the non-linear system derived from the structural scheme $\SK{1}{R}$.
\end{prop}

\begin{proof}
We note that
\[
\boldsymbol{A}^{(0)}\big(\Phi^{(0)}_{n}[\ell+1]-\Phi^{(0)}_{n}[\ell]\big)=
-\Delta t\boldsymbol{A}^{(1)}\big(\Phi^{(1)}_{n}[\ell+1]-\Phi^{(1)}_{n}[\ell]\big),
\]
and thus, applying the Lipschitz condition~\eqref{Vectorial_Lipschitz_0}, yields
\begin{align*}
\big\Vert\Phi^{(0)}_{n}[\ell+1]-\Phi^{(0)}_{n}[\ell]\big\Vert
&\leqslant\Delta t\left\Vert\boldsymbol{B}^{(1)}\right\Vert\,
\big\Vert\Phi^{(1)}_{n}[\ell+1]-\Phi^{(1)}_{n}[\ell]\big\Vert\\
&\leqslant
\Delta t\left\Vert\boldsymbol{B}^{(1)}\right\Vert
\big\Vert\ddf{0}{\Phi^{(0)}_{n}[\ell],T_n}-\ddf{0}{\Phi^{(0)}_{n}[\ell-1],T_n}\big\Vert\\
&\leqslant
\Delta t\left\Vert\boldsymbol{B}^{(1)}\right\Vert\widehat{L}_1
\big\Vert\Phi^{(0)}_{n}[\ell]-\Phi^{(0)}_{n}[\ell-1]\big\Vert,
\end{align*}
which provides the amplification coefficient $\displaystyle\chi_1=\Delta t\widehat{L}_1\left\Vert\boldsymbol{B}^{(1)}\right\Vert$.
Under the CFL-like condition~\eqref{CFL_K=1_fix-point}, we get a geometric series with coefficient $\chi_1<1$, hence the convergence of the fixed-point is guaranteed.
\end{proof}

\subsection{Convergence of the iterative procedure for structural scheme $\SK{2}{R}$}

Once again, we use Lemma~\ref{lemma_bounded_Lipschitz}. We assume that the approximations in vectors $\Phi^{(0)}_{n}[\ell]$, $\ell\geqslant 0$ remain bounded in the ball $B(M_0)$. Hence, vectors $\Phi^{(1)}_{n}[\ell]$ and $\Phi^{(2)}_{n}[\ell]$ are bounded and the Lipschitz inequalities~\eqref{Vectorial_Lipschitz_0} hold between two successive approximations, that is
\begin{gather*}
\Vert\Phi^{(1)}_{n}[\ell+1]-\Phi^{(1)}_{n}[\ell]\Vert\leqslant\widehat{L}_1\Vert\Phi^{(0)}_{n}[\ell+1]-\Phi^{(0)}_{n}[\ell]\Vert,\\
\Vert\Phi^{(2)}_{n}[\ell+1]-\Phi^{(2)}_{n}[\ell]\Vert\leqslant\widehat{L}_2\Vert\Phi^{(0)}_{n}[\ell+1]-\Phi^{(0)}_{n}[\ell]\Vert.
\end{gather*}

\begin{prop}
Let $\Delta t$ be small enough such that
\begin{equation}\label{CFL_K=2_fix-point}
\Delta t\widehat{L}_1\left\Vert\boldsymbol{B}^{(1)}\right\Vert+\Delta t^2\widehat{L}_2\left\Vert\boldsymbol{B}^{(2)}\right\Vert<1,\,
B^{(1)}=\Big(\boldsymbol{A}^{(0)}\Big)^{-1}\boldsymbol{A}^{(1)},\quad\boldsymbol{B}^{(2)}=\Big(\boldsymbol{A}^{(0)}\Big)^{-1}\boldsymbol{A}^{(2)}.
\end{equation}
Then, the fixed-point is a contraction, and the successive numerical approximations converge to the solution of the non-linear system derived from the structural scheme $\SK{2}{R}$.
\end{prop}

\begin{proof}
The proof is very similar to the case of the structural scheme $\SK{1}{R}$. We note that
\begin{align*}
\boldsymbol{A}^{(0)}\big(\Phi^{(0)}_{n}[\ell+1]-\Phi^{(0)}_{n}[\ell]\big)
=&-\Delta t \boldsymbol{A}^{(1)}\big(\Phi^{(1)}_{n}[\ell+1]-\Phi^{(1)}_{n}[\ell]\big)\\
&-\Delta t \boldsymbol{A}^{(2)}(\Phi^{(2)}_{n}[\ell+1]-\Phi^{(2)}_{n}[\ell]),
\end{align*}
and thus, applying the Lipschitz condition~\eqref{Vectorial_Lipschitz_0}, yields
\begin{eqnarray*}
\big\Vert\Phi^{(0)}_{n}[\ell+1]-\Phi^{(0)}_{n}[\ell]\big\Vert
&\leqslant&\Delta t\left\Vert\boldsymbol{B}^{(1)}\right\Vert\,\big\Vert\Phi^{(1)}_{n}[\ell+1]-\Phi^{(1)}_{n}[\ell]\big\Vert+\\
& &\Delta t^2\left\Vert\boldsymbol{B}^{(2)}\right\Vert\,\big\Vert\Phi^{(2)}_{n}[\ell+1]-\Phi^{(2)}_{n}[\ell]\big\Vert\\
&\leqslant&\Delta t\left\Vert\boldsymbol{B}^{(1)}\right\Vert\,\widehat{L}_1\big\Vert\Phi^{(0)}_{n}[\ell]-\Phi^{(0)}_{n}[\ell-1]\big\Vert+\\
& &\Delta t^2\left\Vert\boldsymbol{B}^{(2)}\right\Vert\widehat{L}_2\,\big\Vert\Phi^{(0)}_{n}[\ell]-\Phi^{(0)}_{n}[\ell-1]\big\Vert,
\end{eqnarray*}
which provides the amplification coefficient
\[
\chi_2=
\Delta t\widehat{L}_1\left\Vert\boldsymbol{B}^{(1)}\right\Vert+
\Delta t^2\widehat{L}_2\left\Vert\boldsymbol{B}^{(2)}\right\Vert,
\]
and the convergence of the fixed-point is guaranteed under the CFL-like condition~\eqref{CFL_K=2_fix-point}.
\end{proof}

\begin{rem}
Notice that the amplification coefficient is composed of a linear term with respect to $\Delta t$, corresponding to $k=1$, and a quadratic term with respect to $\Delta t^2$, corresponding to $k=2$. Therefore, the restriction is mainly attributed to the first derivative. In particular, if $\Delta t\widehat{L}_2\leqslant\widehat{L}_1$ and $\Delta t\widehat{L}_1\leqslant 1/2$, we obtain the contraction of the fixed-point.
\hfill\smallBB
\end{rem}

\begin{rem}
The analysis of the convergence of successive iterations for $K>2$ follows the same ideas as for the structural schemes $\SK{1}{R}$ and $\SK{2}{R}$, and derives from the uniform boundness of the approximations and the Lipschitz property for $f(z,t)$ and its derivatives under a CFL-like condition. A generalisation of the condition for the structural scheme $\SK{K}{R}$ then reads
\[
\chi_K=\sum_{k=1}^K\Delta t^k\,\widehat{L}_k\left\Vert\boldsymbol{B}^{(k)}\right\Vert\leqslant 1,\quad\boldsymbol{B}^k=\Big(\boldsymbol{A}^{(0)}\Big)^{-1}\boldsymbol{A}^{(k)}
\]
assuming that matrix $\boldsymbol{A}^{(0)}$ is non-singular.
\hfill\smallBB
\end{rem}

\begin{rem}
Note that the CFL-like conditions come only from the stability of the fixed-point algorithm. As we shall see in the sequel, the non-linear schemes are unconditionally stable, but the proposed iterative algorithms for computing the approximate solution may require conditions to guarantee convergence.
\hfill\smallBB
\end{rem}

\section{Consistency error and convergence order}

The approximations $\phi^{(k)}_{n+r}\approx\phi^{(k)}(t_{n+r})$ are the solution of a non-linear system coupling structural and physical equations. For the consistency error and convergence order analysis, let us denote
\[
\Phi_n^{(k)}=\Big(\phi^{(k)}_{n+1},\ldots,\phi^{(k)}_{n+R}\Big)^\trans\in\mathbb{R}^R,\quad
\overline\Phi_n^{(k)}=\Big(\overline\phi^{(k)}_{n+1},\ldots,\overline\phi^{(k)}_{n+R}\Big)^\trans\in\mathbb{R}^R,
\]
\[
\Psi^K_{n}=\big(\phi^{(0)}_{n},\ldots,\phi^{(K)}_{n}\Big)^\trans\in\mathbb{R}^{K+1},\quad
\overline\Psi^K_{n}=\big(\overline\phi^{(0)}_{n},\ldots,\overline\phi^{(K)}_{n}\Big)^\trans\in\mathbb{R}^{K+1},
\]
\[
\boldsymbol{A}^{(k)}=\Big(a_{k,r}^s\Big)_{r,s=1}^R\in\mathbb{R}^{R\times R},\quad
\widehat{\boldsymbol{A}}^K_n=\Big(a_{k,0}^s\,\Delta t^k\,\Big)^{s\bint{1,R}}_{k\bint{0,K}}\in\mathbb{R}^{R\times(K+1)},
\]
with $\overline\phi^{(k)}_{n+r}=\phi^{(k)}(t_{n+r})$, $k\bint{0,K}$, $r\bint{1,R}$, the exact solution.

\begin{lemma}\label{lemma_SE_with_constant}
Adopting a column vector expression of the matrix $\widehat{\boldsymbol{A}}^K_n=[\widehat{\ba}_0,\Delta t\widehat{\ba}_1,\ldots,\Delta t^K\widehat{\ba}_K]$ with $\widehat{\ba}_k\in\mathbb{R}^R$, $k\bint{0,K}$, we have the property
\begin{equation}\label{SE_with_constant}
\Big(\boldsymbol{A}^{(0)}\Big)^{-1}\widehat{\ba}_0=-\Vone\in\mathbb{R}^R,
\end{equation}
where $\Vone=[1,1,\ldots,1]^\trans$.
\end{lemma}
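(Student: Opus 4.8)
The plan is to exploit the single most elementary property built into the structural equations: they reproduce constants exactly. Recall that each coefficient vector $\ba^s$, $s\bint{1,R}$, lies in the kernel $\mathcal K^{S}$, hence satisfies $E(\ba^s;\pi_m)=0$ for every $m\bint{1,M-S}$. Here $S=R$, so $M-S=(K+1)(R+1)-R\geqslant 1$ and the index $m=1$ is always admissible; the corresponding function $\pi_1(t;t_n)=\big((t-t_n)/\Delta t\big)^0\equiv 1$ is the constant. This is precisely the observation already used informally in the Runge--Kutta reformulation (``the constant function satisfies the structural equation''), and I would promote it here to the claimed matrix identity.

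First I would evaluate the functional on $\pi_1$. Because $\pi_1$ is constant, $\pi_1^{(0)}\equiv 1$ while $\pi_1^{(k)}\equiv 0$ for all $k\geqslant 1$, so only the $k=0$ terms survive:
\[
0=E(\ba^s;\pi_1)=\sum_{k=0}^K\sum_{r=0}^R a_{k,r}^s\,\Delta t^k\,\pi_1^{(k)}(t_{n+r})=\sum_{r=0}^R a_{0,r}^s,\qquad s\bint{1,R}.
\]
Splitting off the initial node $r=0$ yields, for each $s$, the scalar identity $a_{0,0}^s=-\sum_{r=1}^R a_{0,r}^s$.

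Next I would read this off in matrix form, the only point requiring care being the index conventions. By definition $A^{(0)}=\big(a_{0,r}^s\big)_{r,s=1}^R$ acts so that the $s$-th component of $A^{(0)}\Vone$ equals $\sum_{r=1}^R a_{0,r}^s$, while $\widehat\ba_0=\big(a_{0,0}^s\big)_{s=1}^R$ is the zeroth column of $\widehat A^K_n$. The identity of the previous step then reads componentwise $\big(A^{(0)}\Vone\big)_s=-(\widehat\ba_0)_s$, i.e. $A^{(0)}\Vone=-\widehat\ba_0$. Since the scheme is built under the standing assumption that $A^{(0)}$ is non-singular, I would multiply on the left by $\big(A^{(0)}\big)^{-1}$ to obtain $\Vone=-\big(A^{(0)}\big)^{-1}\widehat\ba_0$, which is exactly \eqref{SE_with_constant}.

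There is no genuine obstacle here: the result is a one-line consequence of constant reproduction. The only things to verify carefully are that $m=1$ indeed lies in the range $\bint{1,M-S}$ of reproduced polynomials (so the constant is among the exactness constraints), and that the row/column roles of the superscript $s$ and subscript $r$ in $A^{(0)}$ and in the columns $\widehat\ba_k$ are matched consistently with the structural-equation sum $\sum_{r}a_{k,r}^s\,\Delta t^k\,\phi^{(k)}_{n+r}=0$. Once these bookkeeping points are fixed, the conclusion is immediate.
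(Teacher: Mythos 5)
Your proof is correct and follows essentially the same route as the paper's: both rest on the exactness of the structural equations for the constant polynomial $\pi_1(t;t_n)\equiv 1$, which forces $\sum_{r=0}^{R}a_{0,r}^s=0$ for each $s$, i.e.\ $A^{(0)}\Vone+\widehat\ba_0=0$, and the conclusion follows from the non-singularity of $A^{(0)}$. The only difference is presentational — you work at the level of the functional $E(\ba^s;\pi_1)$ and assemble the scalar identities into matrix form (also checking that $m=1\leqslant M-S$), whereas the paper substitutes the constant "solution" $\Phi_n^{(0)}=\Vone$, $\Phi_n^{(k)}=0$, $\Psi_n^K=[1,0,\cdots,0]^\trans$ directly into the compact matrix form of the structural equations.
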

\begin{proof}
The structural equations are exact for the constant polynomial function $\pi_1(t;t_n)=1$. Therefore, taking $\Phi_n^{(0)}=\Vone^\trans\in\mathbb{R}^R$, then
$\Phi_n^{(k)}=[0,0,\ldots,0]^\trans\in\mathbb{R}^R$, $k\bint{1,K}$, and $\Psi^K_{n}=[1,0,\ldots,0]^\trans\in\mathbb{R}^{K+1}$. We deduce the compatibility relation
\[
\boldsymbol{A}^{(0)}\Phi_n^{(0)}+\widehat{\boldsymbol{A}}^K_n\Psi^K_{n}=0\implies\boldsymbol{A}^{(0)}\Vone+\widehat{\ba}_0=0.
\]
Assuming that the matrix $\boldsymbol{A}^{(0)}$ is nonsingular, we obtain the relation~\eqref{SE_with_constant}.
\end{proof}

\subsection{Consistency error of the structural equations}

For the structural scheme $\SK{K}{R}$, the $R$ structural equations in compact form read
\[
\sum_{k=0}^K\Delta t^k\,\boldsymbol{A}^{(k)}\Phi^{(k)}_{n}+\widehat{\boldsymbol{A}}^K_n\Psi^K_{n}=0
\]
that is
\begin{equation}
\label{exact equation}
0=\sum_{k=0}^K\sum_{r=0}^Ra_{k,r}^s\,\Delta t^k\,\phi^{(k)}_{n+r}.
\end{equation}
These equations are satisfied for any polynomial of degree $K(R+1)$. On the one hand, using the Taylor expansion, for $h>0$, we have
\[
\phi(t_n+h)=\pi_\phi(h;t_n)+\frac{h^{K(R+1)+1}}{(K(R+1)+1)!}\phi^{(K(R+1)+1)}(t_n+\theta_0 h),\,\theta_0\in [0,1],
\]
where $\pi_\phi(h;t_n)$ is the Taylor polynomial of degree $K(R+1)$. Moreover, the Taylor expansion for the $k$-th derivative with respect to the variable $h$ is
\[
\phi^{(k)}(t_n+h)=\pi_\phi^{(k)}(h;t_n)+\frac{h^{K(R+1)+1-k}}{(K(R+1)+1-k)!}\phi^{(K(R+1)+1)}(t_n+\theta_kh),\,\theta_k\in [0,1].
\]
Consistency errors correspond to the error when the approximate solution is substituted with the exact solution in the structural equations.

\begin{prop}
Let $\varepsilon^s_n$ be the consistency error for the structural equation $\SEE{K}{R}{R}(s)$, $s\bint{1,R}$, given by
\begin{equation}\label{consistency_error}
\varepsilon^s_n=\sum_{k=0}^K\sum_{r=0}^Ra_{k,r}^s\,\Delta t^k\,\phi^{(k)}(t_{n}+r\Delta t)
=\sum_{k=0}^K\sum_{r=0}^Ra_{k,r}^s\,\Delta t^k\,\overline\phi^{(k)}_{n+r}.
\end{equation}
Then
\begin{equation}
\label{consistency_error_ineq}
|\varepsilon^s_n|\leqslant C^s_{K,R}\,\Delta t^{K(R+1)+1}
\end{equation}
with $C^s_{K,R}$ a real number depending on $\displaystyle\max_{[0,T]}|\phi^{(K(R+1)+1}|$ but independent of $\Delta t$.
\end{prop}

\begin{proof}
Using that the relation holds exactly for the Taylor polynomial $\pi(h;t_n)$ as a function of $h$, we deduce, with $h=\Delta t, 2\Delta t,\ldots, R\Delta t$, that
\begin{align*}
\varepsilon^s_n&=\sum_{k=0}^K\sum_{r=0}^Ra_{k,r}^s\,\Delta t^k\,\frac{(r\Delta t)^{K(R+1)+1-k}}{(K(R+1)+1-k)!}\phi^{(K(R+1)+1)}(t_n+\theta_{r,k}\,r\Delta t)\\
&=\Delta t^{K(R+1)+1}\,\sum_{k=0}^K\sum_{r=0}^Ra_{k,r}^s\frac{r^{K(R+1)+1-k}}{\big(K(R+1)+1-k\big)!}\phi^{(K(R+1)+1)}(t_n+\theta_{r,k}\,r\Delta t).
\end{align*}
Then
\begin{align*}
|\varepsilon^s_n|&\leqslant\Delta t^{K(R+1)+1}\,L_{K,R}\sum_{k=0}^K\sum_{r=0}^R|a_{k,r}^s|\;\frac{r^{K(R+1)+1-k}}{\big(K(R+1)+1-k\big)!},
\end{align*}
with
\[
L_{K,R}=\max_{t\in[0,T]}|\phi^{(K(R+1)+1)}(t)|.
\]
Setting
\[
C^s_{K,R}=L_{K,R}\sum_{k=0}^K\sum_{r=0}^R|a_{k,r}^s|\;\frac{r^{K(R+1)+1-k}}{\big(K(R+1)+1-k\big)!},
\]
provides the relation~\eqref{consistency_error_ineq}.
\end{proof}

If $\mathcal{E}_n=(\varepsilon^s_n)_{s\bint{1,R}}$ stands for the vector that gathers the consistency errors, then we have
\[
\displaystyle\Vert\mathcal{E}_n\Vert_\infty=C_{K,R}\,\Delta t^{K(R+1)+1}
\]
with $\displaystyle C_{K,R}=\max_{s\bint{1,R}} C^s_{K,R}$.

\subsection{Convergence order of the structural scheme $\SK{K}{R}$}

We adapt the Gr\"onwall lemma~\cite{C06} for the structural scheme $\SK{K}{R}$.

\begin{lemma}
\label{my_gronwall_lemma}
Let $(e_n)_{n=0}^N$ be a sequence of real numbers with $N=J\times R$($J$ blocks of size $R$) and assume that, for $\displaystyle\Delta t\leqslant\overline\Delta t$ with $\overline\Delta t$ to be fixed further, there exist non-negative real numbers $b^0,\ldots,b^K$, independent of $\Delta t$, such that for $j\bint{0,J-1}$
\[
|e_{(j+1)R}|\leqslant|e_{jR}|\left( 1+\sum_{k=1}^K\Delta t^k\, b^k\right)+b^0\,\Delta t^{P+1},
\]
for a certain $P\in\mathbb{N}$. Then for any $j\bint{1,J}$, we have the estimate
\begin{equation}\label{inequality_gronwall}
|e_{jR}|\leqslant\widetilde{C}\,\Delta t^{P}+\widehat{C}|e_0|,
\end{equation}
where $\widetilde{C}$ and $\widehat{C}$ are constants independent of $\Delta t$.
\end{lemma}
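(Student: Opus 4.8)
The plan is to recognise the stated inequality as a discrete Grönwall inequality for the subsequence $(e_{sR})_{s=0}^S$ and to solve the associated linear recursion explicitly. First I would abbreviate the per-block amplification factor $\rho := 1 + \sum_{k=1}^K \Delta t^k\, B^k$ and the per-block source $\delta := B^0\, \Delta t^{P+1}$, so that the hypothesis becomes the one-step bound $|e_{(s+1)R}| \leqslant \rho\,|e_{sR}| + \delta$. A straightforward induction on $s$ then yields the closed form
\[
|e_{sR}| \leqslant \rho^s\, |e_0| + \delta \sum_{j=0}^{s-1} \rho^j, \qquad s\bint{1,S}.
\]

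The heart of the argument is a uniform-in-$\Delta t$ bound on $\rho^s$. Since $B^k \geqslant 0$ and $\Delta t \leqslant \overline{\Delta t}$, I would factor one power of $\Delta t$ out of the sum, writing $\rho - 1 = \sum_{k=1}^K \Delta t^k\, B^k \leqslant \Delta t\, B^\ast$ with $B^\ast := \sum_{k=1}^K \overline{\Delta t}^{\,k-1} B^k$ a constant independent of $\Delta t$. Using $1 + x \leqslant e^x$ gives $\rho \leqslant e^{\Delta t\, B^\ast}$, hence $\rho^s \leqslant e^{s\Delta t\, B^\ast}$. The key observation is that $s\Delta t$ stays bounded: because the grid satisfies $S\,R\,\Delta t = T$, i.e. $S\Delta t = T/R$, one has $s\Delta t \leqslant S\Delta t = T/R$ for every $s\bint{1,S}$. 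Therefore $\rho^s \leqslant e^{B^\ast T/R} =: \widehat C$, a constant independent of $\Delta t$.

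It remains to control the accumulated source. Bounding each term of the geometric sum by its largest, $\rho^j \leqslant \rho^s \leqslant \widehat C$, I obtain
\[
\delta \sum_{j=0}^{s-1}\rho^j \leqslant \delta\, s\, \widehat C \leqslant \delta\, S\, \widehat C = B^0\,\Delta t^{P+1}\,\frac{T}{R\,\Delta t}\,\widehat C = \frac{B^0\,T\,\widehat C}{R}\,\Delta t^P =: \widetilde C\,\Delta t^P,
\]
again using $S\Delta t = T/R$. Combining the two estimates produces exactly \eqref{inequality_gronwall}, with $\widehat C = e^{B^\ast T/R}$ and $\widetilde C = B^0\,T\,\widehat C / R$, both independent of $\Delta t$.

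I expect the only genuine subtlety to be the balancing that lies at the core of every Grönwall estimate: the number of blocks $s \leqslant S$ grows like $1/\Delta t$, while $\rho - 1$ is $O(\Delta t)$, so their product $s(\rho-1)$, and hence $\rho^s$, remains $O(1)$ uniformly in $\Delta t$. The second point to handle with care is the bookkeeping of the powers of $\Delta t$: the per-block source is $O(\Delta t^{P+1})$, but summing over the $S \sim 1/\Delta t$ blocks costs one power, leaving the announced $O(\Delta t^P)$. Everything else is routine, and the higher-order terms $B^k$, $k \geqslant 2$, never interfere since they are dominated by the $k=1$ contribution once $\Delta t \leqslant \overline{\Delta t}$.
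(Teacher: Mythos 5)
Your proposal is correct and follows essentially the same route as the paper's proof: the same reduction to a one-step recursion with amplification factor $1+C\,\Delta t$ (your $B^\ast$ is exactly the paper's constant $C=\sum_{k=1}^K \overline{\Delta t}^{\,k-1}B^k$), the same induction, and the same exponential bound $(1+C\Delta t)^s\leqslant e^{Cs\Delta t}$ combined with the observation that $s\Delta t$ stays bounded by the final time. The only cosmetic difference is that you bound the accumulated source term-by-term by $\delta\,S\,\widehat C$ using $S\Delta t=T/R$, whereas the paper compares the sum $\sum_{\sigma}\Delta t\,e^{C\sigma\Delta t}$ with the integral $\int_0^T e^{C\xi}\,d\xi=(e^{CT}-1)/C$; both yield constants independent of $\Delta t$.
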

\begin{proof}
Given $\overline{\Delta t}$, which shall be fixed later, we define
\[
C=\sum_{k=1}^K\overline\Delta t^{k-1}\, b^k.
\]
From the assumption and noting that $\Delta t/\overline{\Delta t}\leqslant 1$, we obtain the inequality
\[
|e_{(j+1)R}|\leqslant|e_{jR}|( 1+C\Delta t)+b^0\,\Delta t^{P+1}
\]
for any $j\bint{0,J-1}$. By induction, it yields
\begin{align*}
|e_{(j+1)R}|&\leqslant( 1+C\Delta t)^j|e_0|+b^0\,\Delta t^{P+1}\sum_{\sigma=0}^j( 1+C\Delta t)^\sigma\\
&\leqslant\exp(C j\Delta t)|e_0|+b^0\,\Delta t^{P}\sum_{\sigma=0}^j\Delta t\exp(C\sigma\Delta t)\\
&\leqslant\exp(CT)|e_0|+b^0\,\Delta t^{P}\int_{0}^{T}\exp(C\xi)\,\textrm{d}\xi\\
&\leqslant\exp(CT)|e_0|+b^0\,\Delta t^{P}\frac{\exp(CT)-1}{C}.
\end{align*}
Therefore, we deduce the inequality~\eqref{inequality_gronwall} with
\[
\widetilde{C}=b^0\,\frac{\exp(CT)-1}{C},\quad\widehat{C}=\exp(CT).
\]
\end{proof}
We define the {\it a priori} error (or convergence error) as the quantity $e^{(k)}_{n}=\overline\phi^{(k)}_{n}-\phi^{(k)}_{n}$ for $n\bint{0,N}$ and $k\bint{0,K}$, and $\boldsymbol{E}_n^{(k)}=[e_{n+1}^{(k)},\ldots,e_{n+R}^{(k)}]^\trans$.

\begin{prop}
\label{pro_error_end_of_block}
There exists $\overline{\Delta t}$ such that, for $\Delta t\leqslant\overline{\Delta t}$, we have the estimate
\begin{equation}
\label{estimate_blocl_sR}
\big\vert e^{(0)}_{jR}\big\vert\leqslant\overline{C}\,\Delta t^{K(R+1)},
\end{equation}
for any $j\bint{1,J}$, $jR\leqslant N$, where $\overline{C}$ is a real number independent of $\Delta t$.
\end{prop}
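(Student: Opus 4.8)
The plan is to reduce the global estimate to a one-block error recurrence and then invoke the Gr\"onwall-type Lemma~\ref{my_gronwall_lemma} with $P=K(R+1)$. I define the block-boundary error $e_{sR}=\vert\phi^{(0)}_{sR}-\overline\phi^{(0)}_{sR}\vert$ and observe that the exact initial data give $e_0=0$. The goal is therefore to establish, for $\Delta t$ small enough, an inequality of the form
\[
\vert e_{(s+1)R}\vert\leqslant \vert e_{sR}\vert\Big(1+\sum_{k=1}^K\Delta t^k B^k\Big)+B^0\,\Delta t^{K(R+1)+1},
\]
after which Lemma~\ref{my_gronwall_lemma} yields $\vert e_{sR}\vert\leqslant \overline C\,\Delta t^{K(R+1)}$ with $\overline C=\widetilde C$ since $e_0=0$.

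To obtain the recurrence I would write the compact structural equation $\sum_{k=0}^K\Delta t^k A^{(k)}\Phi^{(k)}_{sR}+\widehat A^K_{sR}\Psi^K_{sR}=0$ for the numerical solution and subtract the same relation evaluated on the exact values, which holds up to the consistency residual $\mathcal E_{sR}$ with $\Vert\mathcal E_{sR}\Vert_\infty\leqslant C_{K,R}\,\Delta t^{K(R+1)+1}$. Writing $E^{(k)}_{sR}=\Phi^{(k)}_{sR}-\overline\Phi^{(k)}_{sR}$ for the internal errors and $\mathcal F^{(k)}=\phi^{(k)}_{sR}-\overline\phi^{(k)}_{sR}$ for the boundary errors, I would isolate the $k=0$ block by left-multiplying with $(A^{(0)})^{-1}$. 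The decisive step is to treat the column $\widehat\ba_0$ with Lemma~\ref{lemma_SE_with_constant}: since $(A^{(0)})^{-1}\widehat\ba_0=-\Vone$, the contribution of the starting function error reduces to $\Vone\,\mathcal F^{(0)}$, whose $L^\infty$ norm is exactly $e_{sR}$, producing the required unit coefficient in front of $\vert e_{sR}\vert$.

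Next I would bound the remaining terms. The internal higher-derivative errors are controlled by the vectorial Lipschitz estimate \eqref{Vectorial_Lipschitz_0}, $\Vert E^{(k)}_{sR}\Vert\leqslant\widehat L_k\Vert E^{(0)}_{sR}\Vert$, while the boundary derivative errors $\mathcal F^{(k)}$, $k\geqslant1$, obey $\vert\mathcal F^{(k)}\vert\leqslant\widehat L_k\,e_{sR}$ by Lemma~\ref{lemma_bounded_Lipschitz_z_0}; both rely on the physical equations holding at every node, including the block interface $t_{sR}$. Taking the $L^\infty$ norm and collecting the internal terms yields a factor $\chi=\sum_{k=1}^K\Delta t^k\widehat L_k\Vert(A^{(0)})^{-1}A^{(k)}\Vert$ multiplying $\Vert E^{(0)}_{sR}\Vert$ on the right-hand side, which is precisely the CFL quantity already met in the convergence analysis. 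Choosing $\overline{\Delta t}$ so that $\chi\leqslant1/2$ lets me absorb this term; then, since $\vert e_{(s+1)R}\vert\leqslant\Vert E^{(0)}_{sR}\Vert_\infty$ because the block endpoint is the last entry of $\Phi^{(0)}_{sR}$, folding all powers $\Delta t^j$ with $j>K$ into lower-order coefficients via $\Delta t\leqslant\overline{\Delta t}$ delivers the desired recurrence with $B^0$ proportional to $\Vert(A^{(0)})^{-1}\Vert\,C_{K,R}$.

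The main obstacle is the implicit character of the block: because $E^{(0)}_{sR}$ appears on both sides through the $A^{(k)}$ terms, the recurrence only closes once the CFL-like smallness makes the contraction factor $\chi$ strictly below one, mirroring the fixed-point arguments. A secondary care is the uniform boundedness of all iterates in $B(M_0)$ needed to apply Lemmas~\ref{lemma_bounded_Lipschitz} and~\ref{lemma_bounded_Lipschitz_z_0}; this is assumed throughout for $\Delta t\leqslant\overline{\Delta t}$, exactly as in the convergence proofs of the iterative procedure. With the one-block inequality in hand, the conclusion follows immediately from Lemma~\ref{my_gronwall_lemma} together with $e_0=0$.
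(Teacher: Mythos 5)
Your proposal is correct and takes essentially the same route as the paper's own proof: you subtract the exact solution in the compact structural system to expose the consistency residual, invoke Lemma~\ref{lemma_SE_with_constant} to obtain the unit coefficient on the block-boundary error, control the internal and boundary derivative errors with \eqref{Vectorial_Lipschitz_0} and Lemma~\ref{lemma_bounded_Lipschitz_z_0}, absorb the implicit term under the smallness condition $\chi\leqslant 1/2$ (identical to the paper's choice $C_f\,\overline{\Delta t}=1/2$), and conclude with Lemma~\ref{my_gronwall_lemma} and $e_0=0$. No gaps; the argument matches the paper step for step.
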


\begin{proof}
The difference between relations~\eqref{consistency_error} and~\eqref{exact equation} give
\[
\varepsilon^s_n=\sum_{k=0}^K\sum_{r=0}^Ra_{k,r}^s\,\Delta t^k\,(\overline\phi^{(k)}_{n+r}-\phi^{(k)}_{n+r})=\sum_{k=0}^K\sum_{r=0}^Ra_{k,r}^s\,\Delta t^k\,e_{n+r}^{(k)},
\]
which can be rewritten in the compact form as
\[
\mathcal{E}_n=\sum_{k=0}^K\Delta t^k\,\boldsymbol{A}^{(k)}\boldsymbol{E}_n^{(k)}+\sum_{k=0}^K\widehat{\ba}_k\,\Delta t^k\,e^{(k)}_n.
\]
Using Lemma~\ref{lemma_SE_with_constant}, we rewrite the consistency error in the form
\begin{align*}
\boldsymbol{E}_n^{(0)}= e^{(0)}_n\Vone-\sum_{k=1}^K\,\Delta t^k\,\boldsymbol{B}^{k}\boldsymbol{E}_n^{(k)} -\sum_{k=1}^K\,\Delta t^k\,\bb^k e^{(k)}_n+\Big(\boldsymbol{A}^{(0)}\Big)^{-1}\mathcal{E}_n
\end{align*}
with $\boldsymbol{B}^{k}=\Big(\boldsymbol{A}^{(0)}\Big)^{-1}\boldsymbol{A}^{(k)}$ and $\bb^k=\Big(\boldsymbol{A}^{(0)}\Big)^{-1}\widehat{\ba}_k$.

Using the $L^\infty$-norm $\Vert\cdot\Vert_\infty$ and the inductive matrix norm (also denoted $\Vert\cdot\Vert_\infty$), we have
\begin{align*}
\bVert\boldsymbol{E}_n^{(0)}\bVert_\infty
& \leqslant\bvert e^{(0)}_n\bvert+
\sum_{k=1}^K\,\Delta t^k\,\left\Vert\boldsymbol{B}^{k}\right\Vert_\infty\bVert\boldsymbol{E}_n^{(k)}\bVert_\infty\\
& \quad
+\sum_{k=1}^K\,\Delta t^k\,\left\Vert\bb^k\right\Vert_\infty\bvert e^{(k)}_n\bvert
+\left\Vert\Big(\boldsymbol{A}^{(0)}\Big)^{-1}\right\Vert_\infty\bVert\mathcal{E}_n\bVert_\infty.
\end{align*}
From inequalities~\eqref{Vectorial_Lipschitz_0}, we then deduce
\[
\bVert\boldsymbol{E}_n^{(k)}\bVert_\infty\leqslant\widehat{L}_k\bVert\boldsymbol{E}_n^{(0)}\bVert_\infty
\textrm{ and }
\big\vert e^{(k)}_n\big\vert\leqslant\widehat{L}_k\big\vert e^{(0)}_n\big\vert.
\]
Using the consistency error estimate, we obtain
\begin{equation}\label{estimate_all_block}
C_f\,\Delta t\,\big\Vert\boldsymbol{E}_n^{(0)}\big\Vert_\infty\leqslant C_b\,\Delta t\,\big\vert e^{(0)}_n\big\vert
+\left\Vert\Big(\boldsymbol{A}^{(0)}\Big)^{-1}\right\Vert_\infty C_{K,R}\,\Delta t^{K(R+1)+1},
\end{equation}
where
\[
C_f\,\Delta t=1-\sum_{k=1}^K\,\Delta t^k\,\widehat{L}_k\left\Vert\boldsymbol{B}^{k}\right\Vert_\infty,\quad
C_b\,\Delta t=1+\sum_{k=1}^K\,\Delta t^k\,\widehat{L}_k\left\Vert\bb^k\right\Vert_\infty.
\]
Due to the inequality $\bvert e^{(0)}_{n+R}\bvert\leqslant\bVert\boldsymbol{E}_n^{(0)}\bVert_\infty$, we deduce
\[
\big\vert e^{(0)}_{n+R}\big\vert\leqslant\frac{C_b\,\Delta t}{C_f\,\Delta t}\big\vert e^{(0)}_n\big\vert+
\left\Vert\Big(\boldsymbol{A}^{(0)}\Big)^{-1}\right\Vert_\infty\frac{C_{K,R}}{C_f\,\Delta t}\,\Delta t^{K(R+1)+1}.
\]
We now define $\overline\Delta t>0$ such that $C_f\,\overline\Delta t=1/2$. Consequently, for any $\Delta t<\overline\Delta t$, we have $C_f\,\Delta t\geqslant 1/2$, and, for $\Delta t\leqslant\overline\Delta t$, we obtain the inequality
\[
\frac{C_b\,\Delta t}{C_f\,\Delta t}= 1+\frac{C_b\,\Delta t-C_f\,\Delta t}{C_f\,\Delta t}\leqslant1+
2\sum_{k=1}^K\,\Delta t^k\,\widehat{L}_k\left(\left\Vert\boldsymbol{B}^{k}\right\Vert_\infty+\left\Vert\bb^k\right\Vert_\infty\right)
\]
We finally conclude that, for $\Delta t\leqslant\overline\Delta t$, we have
\[
\big\vert e^{(0)}_{n+R}\big\vert\leqslant\big\vert e^{(0)}_n\big\vert\left(1+\sum_{k=1}^K\,\Delta t^k\,b^k\right)
+ b^0\,\Delta t^{K(R+1)+1},
\]
with $\displaystyle b^k=2\widehat{L}_k\left(\left\Vert\boldsymbol{B}^{k}\right\Vert_\infty+\left\Vert\bb^k\right\Vert_\infty\right)$ and $b^0=2C_{K,R}$, where the non-negative real numbers $b^0,\ldots,b^K$ do not depend on $\Delta t$.

Since we treat the differential equation using blocks of size $R$, we have $n=jR$ and from Lemma~\ref{my_gronwall_lemma}, the relation~\eqref{inequality_gronwall} holds with $P=K(R+1)$, that is
\[
\big\vert e^{(0)}_{jR}\big\vert\leqslant\widetilde{C}\,\Delta t^{K(R+1)}+\widehat{C}|e^{(0)}_0|.
\]
%\textcolor{magenta}{ao estabelecer a proposição, a constante é $\overline{C}$; $|e_0|$ será $|\boldsymbol{E}_n^{(0)}|$?}\\
If we use the exact solution at time $t=0$ for the initial condition, that is $e^{(0)}_0=0$, we deduce that $\overline{C}=\widetilde{C}$ and the error is of order $\Delta t^{K(R+1)}$.
\end{proof}
\begin{theorem}
Let $n=jR$, $j\bint{1,J}$. Under the assumption of Proposition~\ref{pro_error_end_of_block}, there exists a real value $C$ independent of $\Delta t$, $r$, $j$, and $k$ such that, if $\Delta t\leqslant\overline{\Delta t}$, then
\begin{equation}\label{error_convergence_estimate}
\big\vert e^{(k)}_{n+r}\big\vert\leqslant C\,\Delta t^{K(R+1)}.
\end{equation}
The scheme is of order $K(R+1)$ at any point and for any derivative.
\end{theorem}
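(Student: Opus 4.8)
The plan is to obtain the full-order estimate by bootstrapping from Proposition \ref{pro_error_end_of_block}, which already controls the zeroth-derivative error $\bvert \phi^{(0)}_{sR}-\overline\phi^{(0)}_{sR}\bvert\leqslant \overline C\,\Delta t^{K(R+1)}$ at the block boundaries $n=sR$. The remaining work is purely local: I would promote this single boundary estimate for $k=0$ to a bound valid at every interior node $t_{n+r}$, $r\bint{1,R}$, and for every derivative order $k\bint{0,K}$, without losing any power of $\Delta t$.

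First I would recall inequality \eqref{estimate_all_block}, which bounds the whole block error vector $\bVert\Phi_n^{(0)}-\overline\Phi_n^{(0)}\bVert_\infty$ in terms of the block-initial error $\bvert\phi^{(0)}_n-\overline\phi^{(0)}_n\bvert$ and the consistency error. Taking $n=sR$ and inserting the boundary estimate of Proposition \ref{pro_error_end_of_block}, together with the bound $C_f\Delta t\geqslant 1/2$ valid for $\Delta t\leqslant\overline{\Delta t}$ and the fact that $C_b\Delta t=1+O(\Delta t)$ stays bounded, every term on the right-hand side is $O(\Delta t^{K(R+1)})$, the consistency contribution being smaller still by one power of $\Delta t$. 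Dividing out yields
\[
\bVert\Phi_n^{(0)}-\overline\Phi_n^{(0)}\bVert_\infty\leqslant C'\,\Delta t^{K(R+1)},
\]
so the zeroth-derivative error is of full order $K(R+1)$ simultaneously at all interior nodes of the block.

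Next I would lift this to the higher derivatives. The key observation is that both the numerical derivatives $\Phi_n^{(k)}$ and the exact ones $\overline\Phi_n^{(k)}$ are generated by the same physical recursion $z_k=\ddf{k-1}{t,z_0,\ldots,z_{k-1}}$ — the former because the physical equations enforce it, the latter because these relations are precisely the successive time derivatives of the ODE. Consequently the vectorial Lipschitz estimate \eqref{Vectorial_Lipschitz_0} applies to their difference, giving $\bVert\Phi_n^{(k)}-\overline\Phi_n^{(k)}\bVert_\infty\leqslant \widehat L_k\,\bVert\Phi_n^{(0)}-\overline\Phi_n^{(0)}\bVert_\infty$ for each $k\bint{1,K}$, which combined with the previous step is $O(\Delta t^{K(R+1)})$. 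For the boundary node $r=0$ with $k>0$ I would instead invoke the scalar estimate \eqref{Lipschitz_z_0} of Lemma \ref{lemma_bounded_Lipschitz_z_0} anchored on the Proposition's boundary bound. Setting $C=\max_{0\leqslant k\leqslant K}\widehat L_k\cdot C'$ with $\widehat L_0=1$, and using $\bVert\Phi_n^{(k)}-\overline\Phi_n^{(k)}\bVert_\infty=\max_r\bvert\phi^{(k)}_{n+r}-\phi^{(k)}(t_{n+r})\bvert$, then delivers \eqref{error_convergence_estimate} uniformly in $r$, $k$ and $s$.

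The only genuinely delicate point is ensuring that the Lipschitz constants $\widehat L_k$ are the correct, uniform ones: this requires that the numerical iterates $\Phi_n^{(0)}$ remain inside the ball $B(M_0)$ on which Lemma \ref{lemma_bounded_Lipschitz} and its corollary were established, which is exactly the standing boundedness hypothesis carried over from Proposition \ref{pro_error_end_of_block}. Granting this, the argument involves no further estimation: the full-order, all-node, all-derivative bound is a direct consequence of one boundary estimate propagated by the block inequality \eqref{estimate_all_block} and the Lipschitz structure of the physical equations.
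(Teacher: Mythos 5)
Your proposal is correct and follows essentially the same route as the paper's proof: first the block inequality \eqref{estimate_all_block} combined with the boundary estimate \eqref{estimate_blocl_sR} of Proposition \ref{pro_error_end_of_block} yields the full-order bound on $\big\Vert \Phi_n^{(0)}-\overline\Phi_n^{(0)}\big\Vert_\infty$ at all nodes of the block, and then the Lipschitz property of Lemma \ref{lemma_bounded_Lipschitz_z_0} (equivalently its vectorial form \eqref{Vectorial_Lipschitz_0}) lifts this to every derivative order $k\bint{1,K}$ with $C=\max_k \widehat L_k C^0_{K,R}$. Your explicit remark that the exact and numerical derivatives obey the same recursion $z_k=\ddf{k-1}{t,z_0,\ldots,z_{k-1}}$, and that the iterates must stay in $B(M_0)$, simply makes precise what the paper leaves implicit.
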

\begin{proof}
From relation~\eqref{estimate_all_block} and $\displaystyle\big\vert e^{(0)}_{n+r}\big\vert\leqslant\big\Vert\boldsymbol{E}_n^{(0)}\big\Vert_\infty$, we deduce
\[
\big\vert e^{(0)}_{n+r}\big\vert\leqslant\big\vert e^{(0)}_n\big\vert\left(1+\sum_{k=1}^K\Delta t^k\, b^k\right)
+ b^0\,\Delta t^{K(R+1)+1},
\]
and, with the estimate~\eqref{estimate_blocl_sR}, we deduce the zero-derivative convergence order
\[
\big\vert e^{(0)}_{n+r}\big\vert\leqslant\widetilde{C}\,\Delta t^{K(R+1)}\,\left(1+\sum_{k=1}^K\,\Delta t^k\, b^k\right)+ b^0\,\Delta t^{K(R+1)+1}
\leqslant C^0\,\Delta t^{K(R+1)},
\]
where $C^0$ does not depend on $\Delta t$. On the other hand, using the Lipschitz property of Lemma~\ref{lemma_bounded_Lipschitz_z_0}, we get $\displaystyle\big\vert e^{(k)}_{n+r}\big\vert\leqslant\widehat{L}_k\big\vert e^{(0)}_{n+r}\big\vert\leqslant C^k\,\Delta t^{K(R+1)}$, with $C^k=\widehat{L}_k C^0$. Let $C=\max C^k$ over $k\bint{0,K}$, and we deduce the convergence errors~\eqref{error_convergence_estimate}.
\end{proof}

\section{Linear stability}
\label{subsec_Stab}

The linear stability of the structural schemes is provided by the von Neumann stability analysis. We compute the growth factor of the Fourier modes and check the magnitude of the block approximations. The analysis is based on the solution of the linear ODE $\phi^{(1)}=\lambda\phi^{(0)}$ with $\lambda\in\mathbb{C}$, that is, $f(z,t)=\lambda z$.

\subsection{Linear stability condition}

The first physical equation $\PE{1}(n+r)$ reads $\phi_{n+r}^{(1)}=\lambda\phi_{n+r}^{(0)}$, $r\bint{0,T}$, and the $k$-th physical equation $\PE{k}(n+r)$ is given by $\phi_{n+r}^{(k)}=\lambda^k\phi_{n+r}^{(0)}$. Substituting $\phi^{(k)}_{n+r}$ with $\lambda^k\phi^{(0)}_{n+r}$, $r\bint{0,R}$, into the $R$ structural equations, provides the system
\begin{equation}
\label{eq_stab_pol}
\sum_{k=0}^K\sum_{r=0}^R a_{k,r}^s(\lambda\Delta t)^k\phi^{(0)}_{n+r}=0,\quad s\bint{1,R}.
\end{equation}
The relation~\eqref{eq_stab_pol} is rewritten with the variable $z=\lambda\Delta t$, yielding
\[
\sum_{r=1}^R\left(\sum_{k=0}^K a_{k,r}^s z^k\right)\phi^{(0)}_{n+r}=-\left(\sum_{k=0}^K a_{k,0}^s z^k\right)\phi^{(0)}_{n}.
\]
Denoting
\[
g_{s,r}(z)=\sum_{k=0}^K a_{k,r}^s z^k,\quad
\boldsymbol{G}(z)=\Big(g_{s,r}(z)\Big)_{s,r=1}^{R},\quad
\bb(z)=\Big(g_{s0}(z)\Big)_{s=1}^{R},
\]
the relation~\eqref{eq_stab_pol} can be expressed in the following compact matrix form
\begin{equation}
\label{eqSec3G}
\boldsymbol{G}(z)\Phi_n^{(0)}=-\bb(z)\phi_n^{(0)}\implies\Phi_n^{(0)}=\bchi(z)\phi_n^{(0)},
\end{equation}
where $\bchi(z)=-\boldsymbol{G}^{-1}(z)\bb(z)\in\mathbb{R}^R$ is the vector whose entries are $\chi_r(z)$, $r\bint{1,R}$, representing the amplification function. For stability, each entry must satisfy $|\chi_r(z)|\leqslant 1$.

Because the computation of the next block only uses the vector $\Phi_{n+R}$, our analysis focuses on the amplification function $\chi_{R}(z)$ as a function of the parameter $z$, derived from system~\eqref{eqSec3G}.

We introduce an adaptation of the A-stability for the block structural method.
\begin{defn}
The scheme is A-stable if $|\chi_{R}(z)|\leqslant 1$.
% whenever $\real{z}\leqslant 0$.
\end{defn}

Since there are multiple ways to choose the set of $R$ structural equations, we establish that the stability condition does not depend on the particular basis chosen for the kernel.

\begin{prop}
Let $\boldsymbol{A}=[a^s]_{R\times M}$ and $\widetilde{\boldsymbol{A}}=[\widetilde{a}^s]_{R\times M}$ be two basis of the Kernel $\mathcal K^R$ given in Eq.~\eqref{eq_basis}. Then
\[
\boldsymbol{G}(z)\Phi_n^{(0)}=-{\bb}(z)\phi_n^{(0)}
\quad\text{and}\quad
\widetilde{\boldsymbol{G}}(z)\Phi_n^{(0)}=-\widetilde{\bb}(z)\phi_n^{(0)},
\]
where, $\widetilde{\boldsymbol{G}}(z)$ and $\widetilde{\bb}(z)$ refer to the matrix and vector defined as in Eq.~\eqref{eqSec3G}, but associated with the alternative basis $\widetilde{a}^s$. The amplification vector $\bchi(z)$ therefore remains the same regardless of the basis chosen, showing that stability is determined only by properties of the kernel subspace.
\end{prop}

\begin{proof}
Let $\widetilde{\boldsymbol{A}}$ and $\boldsymbol{A}$ be two different kernel bases, as in Eq.~\eqref{eq_basis}. We have $\widetilde{\boldsymbol{A}}=\boldsymbol{\Xi}\boldsymbol{A}$ where $\boldsymbol{\Xi}$ is a nonsingular $R\times R$ matrix that changes the basis, that is, $\widetilde{\boldsymbol{G}}(z)=\boldsymbol{\Xi}\boldsymbol{G}(z)$ and $\widetilde\bb(z)=\boldsymbol{\Xi}\bb(z)$. From Eq.~\eqref{eqSec3G}, we have $\Phi_n^{(0)}=\bchi(z)\phi_n^{(0)}$ and, using the alternative basis, $\widetilde{\boldsymbol{G}}(z)\Phi_n^{(0)}=-\widetilde{\bb}(z)\phi_n^{(0)}$, hence
\[
\widetilde\bchi(z)=-\widetilde{\boldsymbol{G}}^{-1}(z)\widetilde\bb(z)=-\Big(\boldsymbol{\Xi}\boldsymbol{G}(z)\Big)^{-1}\Xi\bb(z)=-\boldsymbol{G}^{-1}(z)\big(\boldsymbol{\Xi}^{-1}\boldsymbol{\Xi}\big)\bb(z)=\bchi(z).
\]
As a result, all amplification functions $\chi_r$, $r\bint{1,R}$, are independent of the kernel basis chosen. Consequently, the stability condition $|\chi_R(z)|\leqslant 1$ for the approximation at time $t_{n+R}$ is unaffected by the choice of basis.
\end{proof}

To assess stability, we propose a new expression for the amplification function.

\begin{prop}
The amplification coefficient at time $t_{n+R}$ is given by
\begin{equation}
\label{formal_expression_chi}
\chi_{R}(z)=\frac{\det\Big[\boldsymbol{G}(z)\big(\Id-\be_R\,\be_R^\trans\big)-\bb(z)\be_R^\trans\Big]}{\det\big[\boldsymbol{G}(z)\big]},
\end{equation}
where $\Id$ is the identity matrix of size $R\times R$ and $\be_R$ is the last canonic column vector. In particular, $\chi_{R}(z)$ is a rational function.
\end{prop}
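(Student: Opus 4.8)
The plan is to read off $\chi_R(z)$ as a single coordinate of a linear-system solution and then apply Cramer's rule. Recall from the system $G(z)\Phi_n^{(0)}=-b(z)\phi_n^{(0)}$ that $\Phi_n^{(0)}=\chi(z)\phi_n^{(0)}$ with $\chi(z)=-G^{-1}(z)b(z)$; equivalently, whenever $\det G(z)\neq 0$ the vector $\chi(z)$ is the unique solution of the linear system $G(z)\,\chi(z)=-b(z)$. In particular its last entry $\chi_R(z)$ is the $R$-th coordinate of the solution of this system, and the whole proof reduces to expressing that coordinate as a ratio of determinants.

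First I would apply Cramer's rule to $G(z)\chi(z)=-b(z)$. The $R$-th coordinate of the solution equals $\det(G_R)/\det(G(z))$, where $G_R$ denotes the matrix obtained from $G(z)$ by replacing its $R$-th column with the right-hand side vector $-b(z)$. This immediately yields $\chi_R(z)=\det(G_R)/\det(G(z))$, and it only remains to identify $G_R$ with the matrix appearing in~\eqref{formal_expression_chi}.

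The single computation to carry out is the rank-one-update identity $G_R=G(z)(\Id-e_Re_R^\trans)-b(z)e_R^\trans$. I would verify it column by column: $e_Re_R^\trans$ is the matrix with a single $1$ in position $(R,R)$, so $G(\Id-e_Re_R^\trans)=G-(Ge_R)e_R^\trans$ leaves every column of $G$ untouched except the $R$-th, which is set to zero (here $Ge_R$ is precisely the $R$-th column of $G$). Subtracting $b\,e_R^\trans$, whose columns all vanish except the $R$-th which equals $b$, then installs $-b$ as the $R$-th column while leaving the remaining columns of $G$ unchanged. Hence $G(\Id-e_Re_R^\trans)-b(z)e_R^\trans$ is exactly $G$ with its $R$-th column replaced by $-b$, that is $G_R$, which establishes~\eqref{formal_expression_chi}.

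Finally, the rational-function claim is immediate: each entry $g_{sr}(z)=\sum_{k=0}^K a_{k,r}^s z^k$ of $G(z)$ and each entry of $b(z)$ is a polynomial in $z$, so both determinants in~\eqref{formal_expression_chi} are polynomials in $z$ and $\chi_R$ is their quotient. No genuine obstacle arises here; the only points demanding care are keeping the sign of the right-hand side $-b$ consistent throughout (so that the numerator is the determinant of the $-b$-substituted matrix rather than the $+b$ one) and checking the column-replacement identity, both of which are routine once set up as above.
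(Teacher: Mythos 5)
Your proposal is correct and follows essentially the same route as the paper: the paper also applies Cramer's rule to $G(z)\Phi_n^{(0)}=-b(z)\phi_n^{(0)}$ and identifies $G(z)(\Id-e_Re_R^\trans)-b(z)e_R^\trans$ as the matrix $G$ with its last column replaced by $-b$ (the paper merely derives the Cramer identity inline, via $\det\bigl(\Id-e_Re_R^\trans+\Phi_n^{(0)}e_R^\trans\bigr)=\phi_{n+R}^{(0)}$ and multiplication by $G$, where you invoke it as a known theorem). The column-replacement verification and the polynomial-entries argument for rationality match the paper's reasoning.
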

\begin{proof}

For simplicity, we suppress the explicit $z$ dependence and apply Cramer's rule to the system~\eqref{eqSec3G} to determine the last unknown. Consider the upper triangular matrix $\Id-\be_R\,\be_R^\trans+\Phi_n^{(0)}\be_R^\trans$, which is the identity matrix with its $R$-th column replaced by the vector $\Phi_n^{(0)}$. Multiplying matrix $\boldsymbol{G}$ on the left side, yields
\[
\boldsymbol{G}\,\Big(\Id-\be_R\,\be_R^\trans+\Phi_n^{(0)}e_R^\trans\Big)=\boldsymbol{G}\,\Big(\Id-\be_R\,\be_R^\trans\Big)-\bb\phi_n^{(0)}\be_R^\trans.
\]
%And $\det(\Id-e_Re_R^\top+\Phi_n^{(0)}e_R^\trans)=(-1)^{2R}\phi_{n+R}^{(0)}\det\Id_{R-1}=\phi_{n+R}^{(0)}$.
On the one hand, $\displaystyle\det\big[\Id-\be_R\,\be_R^\trans+\Phi_n^{(0)}\be_R^\trans\big]=\phi_{n+R}^{(0)}$, thus
\[
\det\Big[\boldsymbol{G}\big(\Id-\be_R\,\be_R^\trans+\Phi_n^{(0)}\be_R^\trans\big)\Big]=\det[\boldsymbol{G}]\phi_{n+R}^{(0)}.
\]
which, after some algebraic manipulations, provides the expression
\[
\det\Big[\boldsymbol{G}\big(\Id-\be_R\,\be_R^\trans+\Phi_n^{(0)}\be_R^\trans\big)\Big]=
\det\Big[\boldsymbol{G}\big(\Id-\be_R\,\be_R^\trans\big)-\bb\phi_n^{(0)}\be_R^\trans\Big]=
\det\Big[\boldsymbol{G}\big(\Id-\be_R\,\be_R^\trans\big)-\bb\be_R^\trans\Big]\phi_n^{(0)},
\]
and finally
\[
\phi_{n+R}^{(0)}(z)=\frac{\det\Big[\boldsymbol{G}(z)\big(\Id-\be_R\,\be_R^\trans\big)-\bb(z)\,\be_R^\trans\Big]}{\det\big[\boldsymbol{G}(z)\big]}\,\phi_n^{(0)},
\]
and the relation~\eqref{formal_expression_chi} holds. Since the entries of the matrix $\boldsymbol{G}$ and vector $\bb$ are polynomials of $z$ of degree $K$ with real coefficients, the amplification function $\chi_{R}$ is therefore a rational function with real coefficients.
\end{proof}

\subsection{Criteria for the unconditional stability}

Next, we introduce several tools for analysing the stability of the structural scheme \SK{K}{R}. This approach assumes that the transfer function has the form $\chi_R(z)=p(z)/p(-z)$, where $p(z)$ is a polynomial. With the Routh-Hurwitz criterion~\cite{Poz2021}, we thus demonstrate that if all roots of $p(z)$ have negative real parts, the scheme is A-stable.

\begin{lemma}
\label{lema1}
Assume that the transfer function associated with the structural scheme \SK{K}{R} is given by $\chi_R(z)=p(z)/p(-z)$, where $p(z)$ is a polynomial in $\mathbb{C}$ in which all roots have a negative real part. Then, the structural scheme \SK{K}{R} is A-stable.
\end{lemma}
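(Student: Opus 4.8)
The plan is to read A-stability in its standard sense, namely $|\chi_R(z)|\leqslant 1$ for every $z=\lambda\Delta t$ in the closed left half-plane $\real(z)\leqslant 0$, and to derive this bound from the maximum modulus principle applied to $\chi_R$ on that half-plane. The three ingredients I would assemble are: (i) $\chi_R$ is holomorphic on $\{\real(z)\leqslant 0\}$; (ii) $\chi_R$ is bounded there; and (iii) $|\chi_R|\equiv 1$ on the boundary $\real(z)=0$. Granting these, a bounded holomorphic function whose modulus equals $1$ on the entire boundary of the half-plane cannot exceed $1$ in its interior, which is exactly the claim.

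First I would locate the poles. The poles of $\chi_R(z)=p(z)/p(-z)$ are the zeros of $z\mapsto p(-z)$, i.e. the points $z=-\rho$ with $\rho$ a root of $p$. By hypothesis every root satisfies $\real(\rho)<0$, so every pole satisfies $\real(-\rho)>0$ and lies strictly in the open right half-plane. Consequently $\chi_R$ has no pole on the closed left half-plane (in particular none on the imaginary axis), giving ingredient (i). For (ii), numerator and denominator of $\chi_R$ have the same degree $d=\deg p$, so $\chi_R(z)\to(-1)^d$ as $|z|\to\infty$; being a rational function free of poles on $\{\real(z)\leqslant 0\}$ and with a finite limit at infinity, it is bounded on this closed set.

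The heart of the argument is ingredient (iii), for which I would exploit two facts. From the very form of the transfer function one has the functional identity $\chi_R(z)\,\chi_R(-z)=\frac{p(z)}{p(-z)}\cdot\frac{p(-z)}{p(z)}=1$. Moreover, as established earlier, $\chi_R$ is a rational function with \emph{real} coefficients, so $\overline{\chi_R(w)}=\chi_R(\overline w)$ for all $w$; on the imaginary axis $z=iy$ this yields $\overline{\chi_R(iy)}=\chi_R(-iy)$. Combining the two gives $|\chi_R(iy)|^{2}=\chi_R(iy)\,\overline{\chi_R(iy)}=\chi_R(iy)\,\chi_R(-iy)=1$, so $|\chi_R|\equiv 1$ along the whole imaginary axis. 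I would stress here that the naive factor-by-factor estimate $|z-\rho|\leqslant|z+\rho|$ is \emph{false} in general for $\real(z)\leqslant0$ and $\real(\rho)<0$, which is precisely why one needs the global maximum-modulus route rather than an elementary product bound.

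Finally I would conclude with the maximum modulus principle on the half-plane. The clean way is to precompose $\chi_R$ with the Cayley transform $z=\frac{w-1}{w+1}$, which maps the unit disk $\{|w|<1\}$ conformally onto the left half-plane and the boundary circle onto the imaginary axis; the composite function is holomorphic and bounded on the open disk, continuous up to the boundary circle, and of modulus $1$ there, so the classical maximum modulus theorem gives modulus $\leqslant 1$ inside, i.e. $|\chi_R(z)|\leqslant 1$ on $\{\real(z)\leqslant 0\}$. The one point requiring care, and the main obstacle, is the legitimacy of the maximum principle on an \emph{unbounded} domain: this is exactly what the boundedness from step (ii) (the equal degrees of $p(z)$ and $p(-z)$) secures, ruling out any Phragmén--Lindelöf escape of the modulus at infinity. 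This establishes the A-stability of $\SK{K}{R}$.
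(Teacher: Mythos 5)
Your proof is correct and follows essentially the same route as the paper: analyticity of $\chi_R$ in the closed left half-plane from the location of the roots of $p$, unit modulus on the imaginary axis (which, as you rightly stress, rests on $p$ having real coefficients rather than on any factor-by-factor estimate), and the maximum modulus principle to conclude $|\chi_R(z)|\leqslant 1$ there. The only difference is technical bookkeeping for the unbounded domain: you transfer to the unit disk via the Cayley transform and use boundedness coming from the equal degrees of $p(z)$ and $p(-z)$, whereas the paper exhausts the half-plane by semicircular regions of radius $\rho$ and lets $\rho\to\infty$, on which arc $|\chi_R|\to 1$; both devices serve the same purpose.
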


\begin{proof}
If all roots of $p(z)$ are in the left-half of the complex plane, then all roots of $p(-z)$ lie in the right-half. Thus, $\chi_R(z)$ is analytic in the left-half plane.
According to the Maximum Modulus Theorem, the maximum value of $|\chi_R|$ occurs at the boundary of the semicircular region bounded by the imaginary axis from $-\rho$ to $\rho$ and the semicircle $z=\rho\exp(\complexnum{i}\theta)$, $\theta\in[\pi/2,3\pi/2]$.
Since $\chi_R(z)=p(z)/p(-z)$ then, for any $\theta\in[\pi/2,3\pi/2]$, $\lim_{\rho\to+\infty}|\chi_R(z)|=1$.
On the other hand, for imaginary numbers $z=\complexnum{i} y$, for all $y\in\mathbb{R}$, and taking advantage of the fact that the polynomial coefficients are real numbers, we have
\[
|\chi_R(\complexnum{i} y)|=\left|\frac{p(\complexnum{i} y)}{p(-\complexnum{i} y)}\right|
=\left|\frac{\,p(\complexnum{i} y)\,}{\overline{p(\complexnum{i} y)}}\right|=1.
\]
We then conclude that $\displaystyle|\chi_R(z)|\leqslant 1$ for $\real{z}\leqslant 0$, that is, the structural scheme \SK{K}{R} is A-stable.
\end{proof}

In summary, Lemma~\eqref{lema1} stipulates that unconditional stability requires: (1) $\chi_R(z)=p(z)/p(-z)$, and (2) all zeros of $p(z)$ are in the left-half of the complex plane. To check the second condition, we demonstrate that $p(z)$ is Hurwitz stable~\cite{Hurwitz1895}, which is verifiable via the Routh-Hurwitz criterion~\cite{Hurwitz1895,Routh1877} as detailed below.

\begin{defn}
Let
\begin{equation}
\label{HurPoly}
p(z)=a_Iz^I+a_{I-1}z^{I-1}+\cdots+a_1z+a_0
\end{equation}
be a polynomial with real coefficients. The $I\times I$ matrix
\[
\boldsymbol{H}=
\begin{bmatrix}
 a_{I-1} & a_{I-3} & a_{I-5} &\cdots&\cdots& \cdots&0&0&0\\
 a_I & a_{I-2} & a_{I-4} & \cdots&\cdots& \cdots &0&0&0\\
 0 & a_{I-1} & a_{I-3} &\cdots &\cdots& \cdots&0 &0&0\\
 0& a_I & a_{I-2} & \cdots & \cdots& \cdots&0&0&0\\
 0&0& a_{I-1} &\cdots& \cdots &\cdots&a_0&0&0\\
 0&0 & a_I & \cdots & \cdots & \cdots&a_1&0&0\\
 0&0&0& \cdots & \cdots & \cdots&a_2&a_0&0\\
 \vdots & \vdots & \vdots & \vdots & \vdots & \vdots& a_3&a_1&0\\
 0&0&0& \cdots&\cdots&\cdots& a_4&a_2&a_0
\end{bmatrix}
\]
is termed the Hurwitz matrix associated with $p(z)$.
\end{defn}

\begin{defn}
The polynomial $p(z)$ is called the Hurwitz polynomial if $\real{z_i}<0$ for all $i\bint{1,I}$, where $z_i$ are the complex roots of $p(z)$.
\end{defn}

The Routh-Hurwitz criterion is given in the following theorem (see~\cite{Poz2021}).

\begin{theorem} The polynomial $p(z)$ given in Eq.~\eqref{HurPoly} is a Hurwitz polynomial if and only if the principal minors of the Hurwitz matrix $\boldsymbol{H}$ are strictly positive.
\end{theorem}

From Lemma~\ref{lema1} and the definition of a Hurwitz polynomial, we deduce the following theorem.

\begin{theorem}
\label{TheorStab}
Let $\chi_R(z)=p(z)/p(-z)$ be the transfer function at time $t_{n+R}$ associated with the structural scheme \SK{K}{R}. If $p(z)$ is a Hurwitz polynomial in $\mathbb{C}$, then the structural scheme \SK{K}{R} is A-stable.
\end{theorem}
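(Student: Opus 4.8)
The plan is to recognize that, given Lemma~\ref{lema1}, this theorem is a direct corollary obtained by unfolding a single definition. The only thing that genuinely needs to be checked is that the phrase \emph{Hurwitz polynomial} is, by definition, synonymous with the hypothesis already imposed in Lemma~\ref{lema1}.

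First I would recall the definition: $p(z)$ is a Hurwitz polynomial exactly when all of its roots $z_i$ satisfy $\real{z_i}<0$ for $i\bint{1,I}$, that is, when every root lies strictly in the left-half complex plane. This is verbatim the assumption placed on $p$ in Lemma~\ref{lema1} (there phrased as ``a polynomial which all roots have a negative real part''). Hence, under the hypotheses of the theorem, the transfer function $\chi_R(z)=p(z)/p(-z)$ of $\SK{K}{R}$ meets the conditions of that lemma without any further work.

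Next I would simply invoke Lemma~\ref{lema1}, which yields $|\chi_R(z)|\leqslant 1$ for all $z$ with $\real{z}\leqslant 0$; by the definition of A-stability this is precisely the desired conclusion, and the proof terminates.

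I do not expect any substantive obstacle here: the genuine analytic content — the Maximum Modulus argument comparing the boundary behaviour of $|\chi_R|$ on the imaginary axis and on the large semicircle to control its supremum over the whole left-half plane — has already been carried out inside Lemma~\ref{lema1}. The role of this theorem is organizational: it bridges Lemma~\ref{lema1} with the Routh-Hurwitz criterion stated just above, so that the Hurwitz property, and therefore A-stability, can be certified concretely through the strict positivity of the principal minors $m_i$ of the Hurwitz matrix, rather than by locating the roots of $p$ directly.
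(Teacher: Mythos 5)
Your proposal is correct and matches the paper's own treatment exactly: the paper derives Theorem~\ref{TheorStab} immediately from Lemma~\ref{lema1} together with the definition of a Hurwitz polynomial (all roots with strictly negative real part), which is precisely the definitional unfolding you describe. The analytic work (Maximum Modulus argument) indeed resides entirely in Lemma~\ref{lema1}, and the theorem's role as a bridge to the Routh--Hurwitz criterion is just as you say.
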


There is no generic result showing that all structural schemes \SK{K}{R} are unconditionally stable; we must check the A-stability of each scheme individually. To this end, the stability of several structural schemes \SK{K}{R} has been studied by computation of the $\chi_R(z)$ function given in Tables~\ref{table_stab_K1},~\ref{Table_stab_K2}, and~\ref{table_stab_K3} in~\ref{anexe::stability}.
The expressions $\chi_{R}(z)$ of all these schemes have the form $\chi_R(z)=p(z)/p(-z)$, where $p(z)$ is a polynomial of real positive coefficients. Verification of the Routh-Hurwitz criterion is also given in~\ref{anexe::stability}.

We summarise the results in the following corollary and provide, in the proof, the methodology for checking A-stability using Hurwitz polynomials.
\begin{cor}
\label{PropStabK1}
The structural schemes \SK{K}{R}, $K,R=1,2,3$ are A-stable.
\end{cor}

\begin{proof}
The methodology for providing evidence of A-stability is illustrated with the following cases.
\begin{itemize}
\item \textbf{Structural scheme \SK{1}{3}}. The Hurwitz polynomial $p(z)=3z^3+11z^2+18z+12$ is a third-degree polynomial with all positive coefficients (see~\ref{anexe::stability}), while the associated Hurwitz matrix reads
\[
\boldsymbol{H}_{13}=
\begin{bmatrix}
11 & 12 & 0\\
3 & 18 & 0\\
0 & 11 & 12
\end{bmatrix}.
\]
Calculations show that all principal minors $m_i$ are strictly positive and, from Theorem~\ref{TheorStab}, we conclude that the structural scheme \SK{1}{3} is A-stable.

\item \textbf{Structural scheme \SK{2}{2}}. The Hurwitz polynomial $p(z)=z^4+9z^3+39z^2+90z+90$ is a fourth-degree polynomial with all positive coefficients (see~\ref{anexe::stability}), while the associated Hurwitz matrix reads
\[
\boldsymbol{H}_{22}=
\begin{bmatrix}
9 & 90 & 0 & 0\\
1& 39 & 90 & 0\\
0 & 9 & 90 &0\\
0 & 1& 39 & 90
\end{bmatrix}.
\]
Calculations show that all principal minors $m_i$ are strictly positive and, from Theorem~\ref{TheorStab}, we conclude that the structural scheme \SK{2}{2} is A-stable.
\end{itemize}
\end{proof}

\begin{rem}
Since increasing the values of $K$ and $R$ yields Hurwitz polynomials with cumbersome expressions, the above methodology is illustrated only for the structural schemes \SK{1}{3} and \SK{2}{2}.
\hfill\smallBB
\end{rem}

\subsection{About the stability of other classes of structural method}
\rb{The \SK{K}{R} schemes is a particular case of structural schemes. Indeed, Given a block of size $R$ and considering  a scheme based on $K$ derivatives, we have $R(K+1)$ unknowns that requires the same number of equations. The schemes we propose use $K$ physical equations per node, thus $RK$ physical equations, we complete with $R$ structural equations. But other combinations are available. For instance we just use $K-1$ physical equations per nodes and then we need $2R$ structural equations to close the system.
\\
More generaly, any combination between a set of physical equations and a set  of structural equation provides a potential structural scheme but some combination are not consistant (not enough physical equations) and others are not very efficient.
\\
Our conjecture is that a scheme that does not employ all the possible physical equations suffers of a reduction of the order and unconditional stability is lost. Several experiments (not presented here) show that we obtain a conditional stability condition that restrict the size of the time step. Such "weaker scheme" may be interesting if one does not want (or cannot) to compute all the relations $\ddf{k}{.,.}$, $k=0,\ldots,K-1$.
}

\section{Spectral resolution}

Fourier analysis is the standard tool for assessing the spectral resolution of a compact method. The pioneer paper of Lele~\cite{Lele1992} provides an explicit relation for the first and second derivative approximations with respect to the local wave number, $\omega=2\pi\kappa\Delta t$. More precisely, the analysis is based on two linear relations (two structural equations, indeed) that connect zero- and first-derivative approximations on one side and zero- and second-derivative approximations on the other, as follows.
\begin{itemize}
\item The first relation for the zero and first derivative approximations is in the form
\[
\sum_{r=1}^Ra^0_r\phi^{(0)}_{n+r}+\sum_{r=1}^Ra^1_r\phi^{(1)}_{n+r}=0.
\]
By introducing the local modified wave number $\omega^{(1)}$ for the first derivative, we obtain an algebraic expression in terms of $\omega$. In this context, we assume the modified wavenumber remains constant at each discrete time $t_{n+r}$.

\item The second relation for the zero and second derivative approximations is in the form
\[
\sum_{r=1}^R b^0_r\phi^{(0)}_{n+r}+\sum_{r=1}^R b^2_r\phi^{(2)}_{n+r}=0,
\]
that provides an explicit algebraic relation of the modified wave number $\omega^{(2)}$ as a function of $\omega$, independent of the discrete time $t_{n+r}$.
\item The extension to the $k$-th derivative is based on an implicit coupling of $\phi^{(k)}_r$ with $\phi^{(0)}_r$~\cite{CaTy22} by introducing the modified wave number $\omega^{(k)}$. Once again, spectral analysis using von Neumann analysis is available, since we connect the function solely to the $k$-th derivative.
\end{itemize}
The Compact Combined Scheme (CCS)~\cite{Chu1998,Chu1999,Chu2000} uses the same approach but with a slight modification. More specifically, it combines the zero, first, and second derivative approximations at the same time but with two linearly independent equations, leading to a small $2\times 2$ system, which provides the modified wave numbers $\omega^{(1)}$ and $\omega^{(2)}$ for the first and second derivatives as a function of $\omega$, respectively.

\subsection{A new notion of spectral resolution}

Such an approach presents challenges for the structural equations. First, we want to assess the impact of the structural scheme over the entire block, particularly at the last time step $t_{n+R}$ relative to the initial time step $t_n$. Second, when $K\neq R$, there is no square, non-singular system to provide a direct solution for $\omega^{(1)}(\theta)$ up to $\omega^{(K)}(\theta)$. Finally, standard Fourier analysis assumes stationarity, so the phase shifts of the first and second derivatives are invariant with respect to $t_n$, which is not the case when dealing with an initial condition problem where we prescribe all the derivatives at $t=0$.

We propose a new approach for assessing spectral resolution tailored to the structural scheme \SK{K}{R}, specifically addressing the coupling between structural and physical equations. For this, we replace the fundamental solution $\exp(\complexnum{i}2\pi\kappa t)$ with the equation $\phi'=\complexnum{i}2\pi\kappa\phi$. Then, we connect all the derivatives to the solution thanks to the physical equations as
\[
\phi^{(k)}=\Big (\complexnum{i}2\pi\kappa\Big )^k\phi^{(0)},\ k\bint{1,K}.
\]
As a result, the $R$ structural equations yield an $R$-dimensional linear system as previously defined in Eq.~\eqref{eqSec3G}, for the specific case $z=\lambda\Delta t$ with $\lambda =\complexnum{i}2\pi\kappa$. To analyse diffusion and dispersion in the structural schemes \SK{K}{R}, we set the initial block condition at $t_n$ to the exact solution $\phi^{(0)}_n=\exp(\complexnum{i}2\pi\kappa t_n)$. The resulting transfer function, $\chi_r(\complexnum{i}\omega)$, describes the diffusion and dispersion at time $t_{n+r}$. The exact solution at $t_{n+r}$ is $\phi^{(0)}(t_{n+r})=\exp(\complexnum{i} r\omega)\exp(\complexnum{i}2\pi\kappa t_n)$, so the deviation from the exact solution is given by the ratio between the approximation and the exact value at $t_{n+r}$:
\[
\frac{\phi_{n+r}^{(0)}}{\phi^{(0)}(t_{n+r})}=\frac{\chi_r(\complexnum{i}\omega)\phi_n^{(0)}}{\exp(\complexnum{i} r\omega)\exp(\complexnum{i}2\pi\kappa t_n)}=\chi_r(\complexnum{i}\omega)\exp(-\complexnum{i}r\omega).
\]

\begin{defn}
The diffusion of the scheme is given by the modulus of $\zeta_r(\omega)=\chi_r(\complexnum{i}\omega)\exp(-\complexnum{i} r\omega)$ (amplification factor), while the dispersion corresponds to its argument (phase deviation).
\end{defn}

\begin{rem}
It is important to note that the function $\zeta_r$ also depends on the parameters $R$ and $K$, thus we employ the notation $\zeta_r(\omega;K,R)$ when necessary (as in Section~\ref{sec:zeta_R_K_dependency}).
\hfill\smallBB
\end{rem}

Noting that the solution at time $t_{n+R}$ will be used for the next block resolution, we focus our study on the case $r=R$. Of course, all intermediate steps can be analysed in a similar way.
\begin{prop}
If $\chi_R(z)$ is a rational function of the form $p(z)/p(-z)$ then we have $|\chi_R(\complexnum{i}\omega)|=1$ from which $|\zeta_R(\omega)|=1$ for all $\omega>0$. The schemes do not have diffusion at time $t_{n+R}$.
\end{prop}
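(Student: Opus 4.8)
The plan is to exploit the single structural feature that makes the claim work: the denominator is obtained from the numerator by the sign flip $z\mapsto -z$, and $p$ has \emph{real} coefficients. On the imaginary axis this sign flip coincides with complex conjugation, so the numerator and denominator share the same modulus and the quotient has modulus one; the accompanying factor $\exp(-\complexnum{i} R\omega)$ contributes nothing to the modulus, so $|\zeta_R|$ inherits the value $1$. In effect the whole statement reduces to a one-line conjugation symmetry.

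First I would record the conjugation identity. Writing $p(z)=\sum_{j} a_j z^j$ with $a_j\in\mathbb R$ (the coefficients being real by the earlier proposition that establishes $\chi_R$ as a rational function with real coefficients), for real $\omega$ one has $\overline{(\complexnum{i}\omega)^j}=(-\complexnum{i}\omega)^j$, whence
$$
\overline{p(\complexnum{i}\omega)}=\sum_j a_j\,\overline{(\complexnum{i}\omega)^j}=\sum_j a_j\,(-\complexnum{i}\omega)^j=p(-\complexnum{i}\omega).
$$
Second, substituting $z=\complexnum{i}\omega$ into $\chi_R(z)=p(z)/p(-z)$ and inserting this identity gives $\chi_R(\complexnum{i}\omega)=p(\complexnum{i}\omega)/\overline{p(\complexnum{i}\omega)}$, a quotient of a complex number by its own conjugate; taking moduli yields $|\chi_R(\complexnum{i}\omega)|=|p(\complexnum{i}\omega)|/|\overline{p(\complexnum{i}\omega)}|=1$.

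Finally, since $\zeta_R(\omega)=\chi_R(\complexnum{i}\omega)\exp(-\complexnum{i} R\omega)$ and $|\exp(-\complexnum{i} R\omega)|=1$ for real $R\omega$, I would conclude $|\zeta_R(\omega)|=|\chi_R(\complexnum{i}\omega)|=1$ for every $\omega>0$, i.e. the scheme exhibits no diffusion at $t_{n+R}$. The only point requiring care is the well-definedness of the quotient, namely that $p(\complexnum{i}\omega)\neq 0$ so that we are not dividing through by a vanishing conjugate; this is precisely the Hurwitz property invoked for the $A$-stability in Lemma~\ref{lema1} (no roots on the imaginary axis). That is the single hypothesis I would make explicit, and it is the closest thing to an obstacle: there is no genuine computational difficulty, the entire content being that evaluation of a real polynomial at $\complexnum{i}\omega$ and at $-\complexnum{i}\omega$ returns conjugate values of equal modulus.
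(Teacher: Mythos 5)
Your proof is correct, and it supplies exactly the reasoning the paper leaves implicit: the paper states this proposition without any proof at all, and the same identity $|p(\complexnum{i} y)/p(-\complexnum{i} y)|=1$ is asserted without justification inside the proof of Lemma~\ref{lema1}; your conjugation argument (real coefficients give $p(-\complexnum{i}\omega)=\overline{p(\complexnum{i}\omega)}$, hence a quotient of a number by its conjugate) is the standard and intended route, resting on the fact, established in the preceding proposition, that $\chi_R$ has real coefficients. Your added remark that $p(\complexnum{i}\omega)\neq 0$ is needed for well-definedness, and that this follows from the Hurwitz property, is a genuine point of care that the paper glosses over entirely.
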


\begin{rem}
For $K,R=1,2,3$, we have explicitly determined the amplification factor $\chi_R(z)$, case by case, as the quotient $p(z)/p(-z)$. We make the conjecture that this is always the case for any $K$ and $R$.
\hfill\smallBB
\end{rem}

\subsection{Phase error analysis}

Thanks to the explicit expressions of the coefficients of the structural equations, we get an analytical expression of the transfer functions $\chi_R(z)$ as rational functions for a set of situations given in Tables~\ref{table_stab_K1},~\ref{Table_stab_K2}, and~\ref{table_stab_K3} in~\ref{anexe::stability}. We then produce the polynomial expression as a function of $\omega$, which we report in Table~\ref{polynomial_omega} and use to compute the $\zeta_R(\omega)$ function.

\begin{table}[ht]
\centering
\caption{Spectral deviation -- polynomial expressions with respect to the local wavelength $\omega$ as function of $K$ and $R$.}
\label{polynomial_omega}
\renewcommand{\arraystretch}{1}
\begin{tabular}{@{}lll@{}}
\toprule
scheme && $p(\complexnum{i}\omega)=p(\complexnum{i}\omega;K,R)$\\
\midrule
\SK{1}{1} && $2+\complexnum{i}\omega$\\
\SK{1}{2} && $-\omega^2+3\complexnum{i}\omega+3$\\
\SK{1}{3} && $-3\complexnum{i}\omega^3 -11\omega^2+18\complexnum{i}\omega+12$\\
\SK{2}{1} && $-\omega^2+6\complexnum{i}\omega+12$\\
\SK{2}{2} && $\omega^4-9\complexnum{i}\omega^3-39\omega^2+90\complexnum{i}\omega+90$\\
\SK{2}{3} && $-3\omega^6+33\complexnum{i}\omega^5+193\omega^4-720\complexnum{i}\omega^3-1740\omega^2+2520\complexnum{i}\omega+1680$\\
\SK{3}{1} && $-\complexnum{i}\omega^3-12\omega^2+60\complexnum{i}\omega+120$\\
\SK{3}{2} && $-\omega^6+18\complexnum{i}\omega^5+165\omega^4-945\complexnum{i}\omega^3-3465\omega^2+7560\complexnum{i}\omega+7560$\\
\SK{3}{3} && $9\complexnum{i}\omega^9+198\omega^8-2355\complexnum{i}\omega^7-19075\omega^6+112770\complexnum{i}\omega^5$\\
{} && $\quad+\ 494760\omega^4-1587600\complexnum{i}\omega^3-3553200\omega^2+4989600\complexnum{i}\omega+3326400$\\
\bottomrule
\end{tabular}
\end{table}

The phase deviation (in radians) as a function of the local wavelength $\omega$ is plotted in Figure~\ref{figure::Dispersion} for $R=1,2,3$ with $K=1,2,3$. As expected, spectral resolution improves drastically with increasing $K$, whereas the impact of $R$ is less obvious. We observe that large deviations occur for $\omega\geqslant\pi$ when $R=3$. Particular attention is paid to the case with $R=2$ and $K=3$, where the phase deviation remains very small until $\omega=2\pi$. Indeed, only three points approximate a complete revolution with a tiny phase deviation, below $\pi/20$.

\begin{figure}[!ht]
\centering
\begin{tabular}{ccc}
\includegraphics[width=0.3\textwidth]{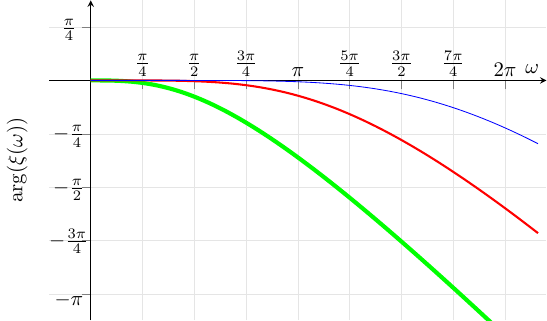}
& \includegraphics[width=0.3\textwidth]{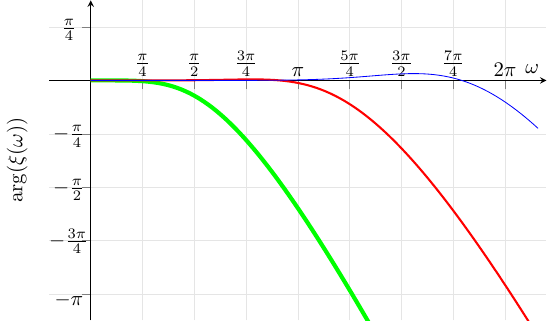}
& \includegraphics[width=0.3\textwidth]{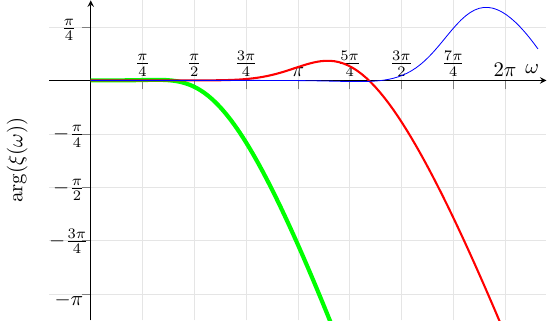}\\\\
(a) \SK{K}{1}. & (b) \SK{K}{2}. & (c) \SK{K}{3}.
\end{tabular}
\caption[]{Dispersion curves $\arg(\zeta_R)$ as function of $\omega$ with $K=1$ (\greenline), $K=2$ (\redline), and $K=3$ (\blueline).}
\label{figure::Dispersion}
\end{figure}

\subsection{Spectral resolution analysis with respect to $\Delta t$ and $K$}

Given $N$ and $\omega$, the impact of duplicating the number of points on the phase deviation is assessed. Indeed, taking $2N$ points provides the time parameter $\Delta t/2$ and, consequently, the local wavelength $\omega/2$, but the number of blocks is duplicated. Hence, the reference phase deviation $\zeta_R(\omega)$, corresponding to $N$ points, must be compared with $\zeta_R^2(\omega/2)$, which involves $2N$ points. More generally, for $\ell\,N$ points, the phase deviation $\zeta_R(\omega)$ must be compared with $\zeta_R^\ell(\omega/\ell)$, since $\ell$ blocks are necessary to reach the same final time.

\begin{figure}[!ht]
\centering
\begin{tabular}{@{}ccc@{}}
\includegraphics[width=0.3\textwidth,trim=0cm 0cm 0cm 0.5cm,clip=true]{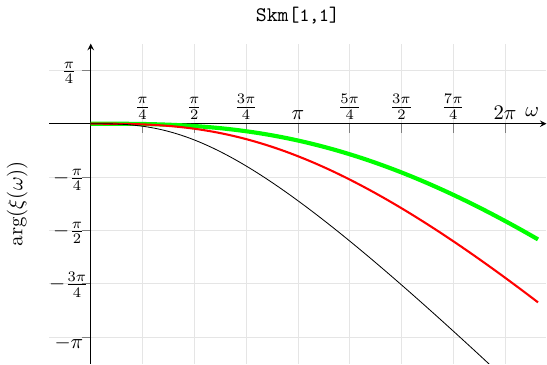}
& \includegraphics[width=0.3\textwidth,trim=0cm 0cm 0cm 0.5cm,clip=true]{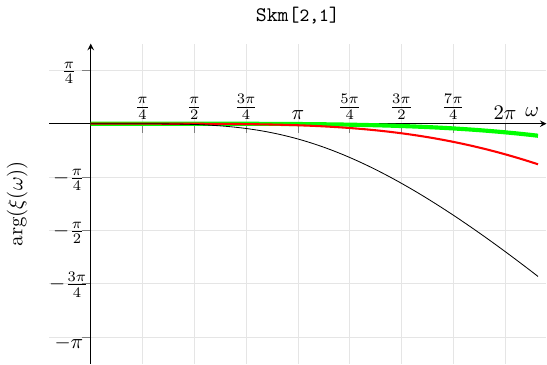}
& \includegraphics[width=0.3\textwidth,trim=0cm 0cm 0cm 0.5cm,clip=true]{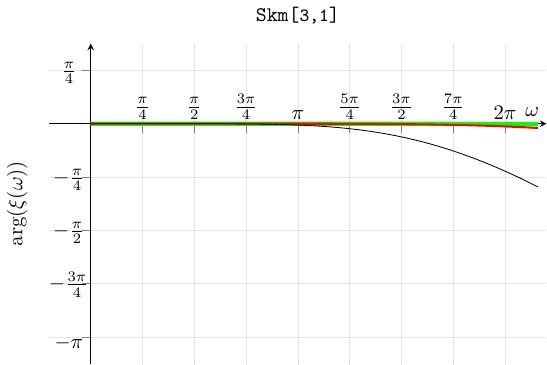}\\
(a) \SK{1}{1}. & (b) \SK{2}{1}. & (c) \SK{3}{1}.\\
\includegraphics[width=0.3\textwidth,trim=0cm 0cm 0cm 0.5cm,clip=true]{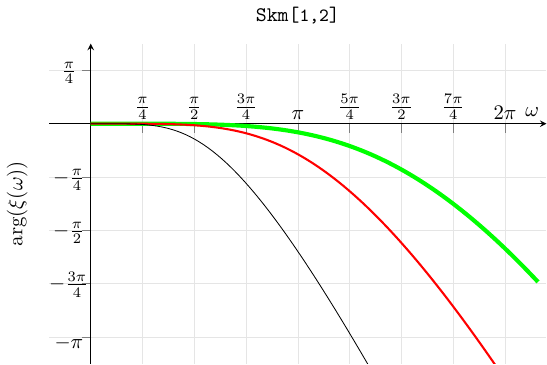}
& \includegraphics[width=0.3\textwidth,trim=0cm 0cm 0cm 0.5cm,clip=true]{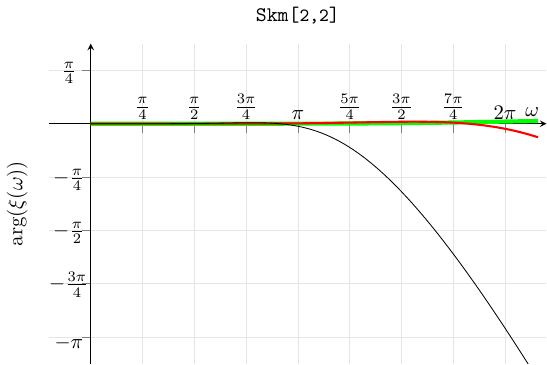}
& \includegraphics[width=0.3\textwidth,trim=0cm 0cm 0cm 0.5cm,clip=true]{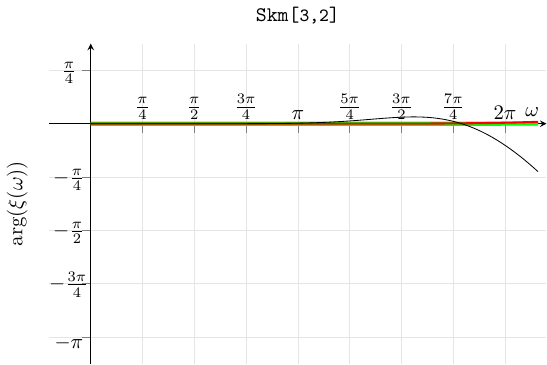}\\
(d) \SK{1}{2}. & (e) \SK{2}{2}. & (f) \SK{3}{2}.\\
\includegraphics[width=0.3\textwidth,trim=0cm 0cm 0cm 0.5cm,clip=true]{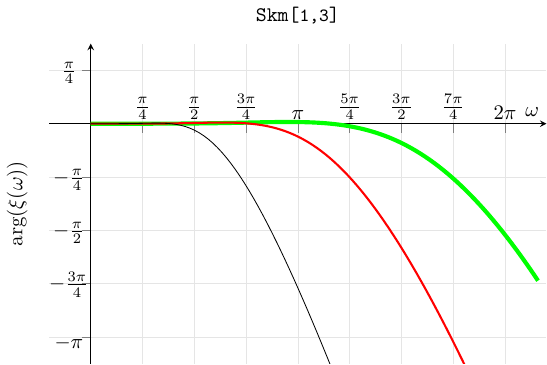}
& \includegraphics[width=0.3\textwidth,trim=0cm 0cm 0cm 0.5cm,clip=true]{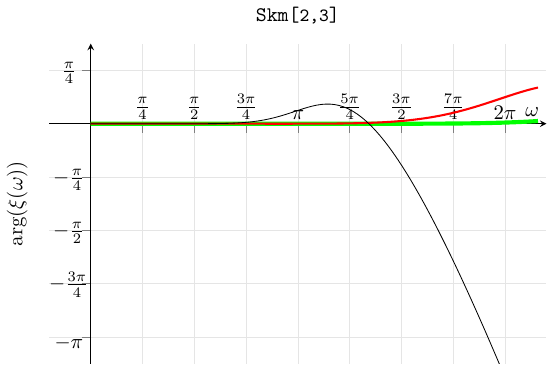}
& \includegraphics[width=0.3\textwidth,trim=0cm 0cm 0cm 0.5cm,clip=true]{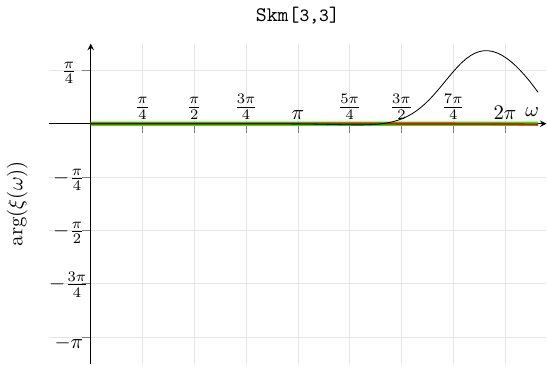}\\
(g) \SK{1}{3}. & (h) \SK{2}{3}. & (i) \SK{3}{3}.
\end{tabular}
\caption[]{Dispersion curves $\arg(\zeta_R(\omega))$ for $\Delta t$ (\blackline), $\arg(\zeta_R^2(\omega/2))$ for $\Delta t/2$ (\redline), and $\arg(\zeta_R^3(\omega/3))$ for $\Delta t/3$ (\greenline) as function of $\omega$ for schemes \SK{K}{R}, $K,R=1,2,3$.}
\label{fig:Dispersion_freq}
\end{figure}

The phase deviations $\zeta_R(\omega)$, $\zeta_R^2(\omega/2)$, and $\zeta_R^3(\omega/3)$ using the reference time step $\Delta t=T/N$ as a function of $K$ (columns) and $R$ (rows) are plotted in Figure~\ref{fig:Dispersion_freq}. First, as expected, a smaller time step strongly improves the spectral resolution by providing more time grid points. The trade-off between the argument reduction and the power increase of the phase deviation $\zeta_R^\ell(\omega/\ell)$ favors a larger $\ell$. Second, for fixed $\ell$, the phase error is drastically reduced with larger $K$. For example, with $\ell=1$, the phase error is almost contained in $\omega=\pi/4$ ($K=1$), $\omega=\pi/2$ ($K=2$), and $\omega=\pi$ ($K=3$). This last case indicates that we recovered subscale information, namely the presence of an extremum within the cell.

\subsection{Spectral resolution analysis with respect to $K$ and $R$} \label{sec:zeta_R_K_dependency}

Let $T=1$ and $\kappa\in\mathbb{N}$ be given. We fix the value of $N$ such that $J=N/R\in\mathbb{N}$, with $R=1,\ldots,4$. The aim is to compare the deviation of the numerical approximation in the final time $T$ for different choices of $K$ and $R$, while the time step $\Delta t=T/N$ is constant, independently of the two parameters. For $R=1$, the final deviation is ${\tt dev}(\omega;K,1)=\big (\zeta_R(\omega;K,1)\big )^N$ since $N$ blocks of size $R=1$ must be solved. For blocks of size $R$, the final deviation at the final time is given by ${\tt dev}(\omega;K,R)=\big (\zeta_R(\omega;K,R)\big )^{J}$.
Consequently, numerical simulations can be performed to assess the phase deviation relative to the analytic expression.

\begin{table}[ht]
\centering
\caption{Comparison between the analytical error calculated with $\arg(\zeta_R(\omega;K,R))$ and the numerical error obtained with the structural schemes $\SK{K}{R}$ for $N=36$, $T=1$, and $\kappa=1$ ($\omega=2\pi/36$).}
\label{Table::K1}
\begin{tabular}{@{}llrrrr@{}}
\toprule
K & err & R=1 & R=2 & R=3 & R=4\\
\midrule
\multirow{2}{*}{1}
&analytic &-1.59E-02 &-1.29E-04 & 7.07E-05 & 2.20E-06\\
&numeric &-1.59E-02 &-1.29E-04 & 7.07E-05 & 2.20E-06\\
\midrule
\multirow{2}{*}{2}
&analytic &-8.08E-06 & 1.87E-08 &-5.08E-11 & 1.61E-13\\
&numeric &-8.08E-06 & 1.87E-08 &-5.08E-11 & 1.61E-13\\
\midrule
\multirow{2}{*}{3}
&analytic &-1.76E-09 & 6.27E-15 & 2.61E-18 &-5.38E-23\\
&numeric &-1.76E-09 & 6.27E-15 & 2.61E-18 &-5.38E-23\\
\midrule
\multirow{2}{*}{4}
&analytic &-2.13E-13 &-8.88E-20 &-5.71E-26 &-5.46E-32\\
&numeric &-2.13E-13 &-8.88E-20 &-5.71E-26 &-5.46E-32\\
\bottomrule
\end{tabular}
\end{table}

\begin{table}[ht]
\centering
\caption{Comparison between the analytical error calculated with $\arg(\zeta_R(\omega;K,R))$ and the numerical error obtained with the structural schemes $\SK{K}{R}$ for $N=36$, $T=1$, and $\kappa=2$, ($\omega=2\pi/18$).}
\label{Table::K2}
\begin{tabular}{@{}llrrrr@{}}
\toprule
K & err & R=1 & R=2 & R=3 & R=4\\
\midrule
\multirow{2}{*}{1}
& analytic & -1.25E-01 &-4.03E-03 & 2.05E-03 & 2.64E-04\\
& numeric & -1.25E-01 &-4.03E-03 & 2.05E-03 & 2.64E-04\\
\midrule
\multirow{2}{*}{2}
& analytic & -2.57E-04 & 2.34E-06 &-2.46E-08 & 2.94E-10\\
& numeric & -2.57E-04 & 2.34E-06 &-2.46E-08 & 2.94E-10\\
\midrule
\multirow{2}{*}{3}
& analytic & -2.24E-07 & 1.27E-11 & 2.00E-14 &-6.72E-18\\
& numeric & -2.24E-07 & 1.27E-11 & 2.00E-14 &-6.72E-18\\
\midrule
\multirow{2}{*}{4}
& analytic & -1.09E-10 &-7.14E-16 &-7.06E-21 &-1.01E-25\\
& numeric & -1.09E-10 &-7.14E-16 &-7.06E-21 &-1.01E-25\\
\bottomrule
\end{tabular}
\end{table}

The deviation errors computed analytically and numerically for different values of $\kappa$ are reported in Tables~\ref{Table::K1} and~\ref{Table::K2}. We observe a perfect match between theory and simulation. For $\kappa=1$, the solution is a full revolution discretised with 36 points. The deviation decreases as $K$ and $R$ increase, in accordance with the scheme's convergence order. We note that doubling the frequency to $\kappa=2$, that is, 18 points per full revolution, strongly impacts accuracy, especially for the highest-order-accurate schemes. For example, one order of magnitude is lost for $K=1$ and $R=1$, while seven orders of magnitude are lost for $K=4$ and $R=4$.

To assess the spectral resolution in more challenging situations, we increase $\kappa$ to $4$, $8$, and $12$. The last case means that we have $9$ time grid points to compute two revolutions. The analytical errors of the phase deviation for these cases are reported in Table~\ref{Table::K345}. Clearly, when $K=1$, we do not resolve the sub-scaled variations with $\kappa=8,12$, and the solution is no longer admissible. The case $K=2$ provides a better approximation of the solution but requires a larger $R$. From $K=3$ and above, we managed to solve the sub-scaled information (at least the one associated with the highest frequency $\kappa=12$).

\begin{table}[ht]
\centering
\caption{Analytical error calculated as $\arg(\zeta_R(\omega;K,R))$ for the structural schemes $\SK{K}{R}$ for $N=36$, $T=1$ and three values of $\kappa$.}
\label{Table::K345}
\begin{tabular}{@{}llrrrr@{}}
\toprule
$\kappa$&K & R=1 & R=2 & R=3 & R=4\\
\midrule
\multirow{4}{*}{4}
& 1 &-9.52E-01 &-1.17E-01 & 4.23E-02 & 2.54E-02\\
& 2 &-8.05E-03 & 2.74E-04 &-9.92E-06 & 3.38E-07\\
& 3 &-2.83E-05 & 2.51E-08 & 1.21E-10 &-7.21E-13\\
& 4 &-5.50E-08 &-5.44E-12 &-7.19E-16 &-1.07E-19\\
\midrule
\multirow{4}{*}{8}
& 1 &-1.00E-01 &-2.56E-00 &-3.75E-01 & 7.52E-01\\
& 2 &-2.35E-01 & 2.41E-02 &-1.14E-03 &-4.72E-04\\
& 3 &-3.42E-03 & 4.39E-05 &-5.19E-08 &-3.53E-08\\
& 4 &-2.70E-05 &-3.22E-08 &-1.12E-11 & 2.78E-13\\
\midrule
\multirow{4}{*}{12}
& 1 & 1.66E-00 & 1.54E-00 &-2.61E-01 &-2.89E-00\\
& 2 & -1.52E-00 & 1.89E-01 & 6.68E-02 &-5.55E-02\\
& 3 & -5.30E-02 & 2.90E-03 &-1.71E-04 & 7.06E-06\\
& 4 & -9.65E-04 &-3.10E-06 & 5.90E-08 & 1.87E-09\\
\bottomrule
\end{tabular}
\end{table}

An interesting point to highlight is the trade-off between the total number of unknowns and accuracy. With $K$ derivatives, we handle $N(K+1)$ unknowns, and the number is invariant with respect to the parameter $R$. For example, with $\kappa=2$, comparable errors are obtained with the structural schemes $\SK{2}{4}$ and $\SK{4}{2}$, but with $5N$ unknowns for the first case and $3N$ unknowns for the latter. For $\kappa=2$, the structural schemes $\SK{1}{4}$ and $\SK{2}{1}$ have an accuracy comparable with $3N$ and $2N$ unknowns, respectively. Nevertheless, when increasing the frequency, there may be significant limitations on retrieving sub-scaled information if $K$ is too small, regardless of $R$. For instance, with $\kappa=12$, the structural schemes $\SK{2}{4}$ and $\SK{3}{1}$ have the same phase deviation error, but a larger $K$ (that is, increasing local data) is preferred to provide better insights on the sub-scaled information.

\subsection{Phase convergence}

To assess phase convergence, we solve the linear equation $\phi'(t)=-\complexnum{i}2\kappa\pi\phi(t)$ in $(0,1]$, and assess the deviation at the final time $T$, given as
\[
\zeta(T)=\frac{\phi_N}{\phi(T)}=\phi_N\exp(\complexnum{i}2\kappa\pi T),
\]
which represents the transfer function at the end of the simulation. $|\zeta(T)|$ is the amplitude variation while $\arg\zeta(T)$ is the phase shift of the numerical approximation at the final time. Note that if $\kappa\in\mathbb{Z}$ and $T\in\mathbb{Z}$, we simply have $|\zeta(T)|=|\phi_N|$ and $\arg\zeta(T)=\arg\phi_N$.

\begin{figure}[ht]
\centering
\begin{tabular}{cc}
\includegraphics[width=0.45\linewidth]{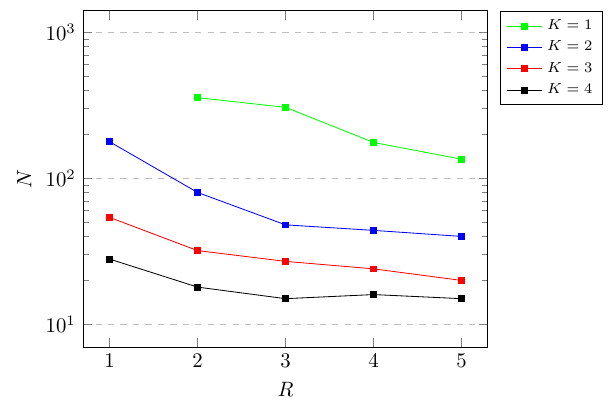}
& \includegraphics[width=0.45\linewidth]{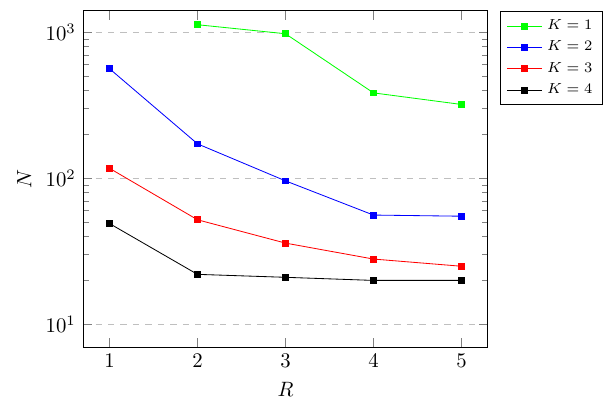}\\
(a) $\varepsilon$=1E-3. & (b) $\varepsilon$=1E-5.
\end{tabular}
\caption{Smallest values for $N$ to provide a phase error above the parameter $\varepsilon$.}
\label{fig:NvsR}
\end{figure}

The numerical test we propose aims to determine the smallest value of $N$ for which the scheme provides an error below a prescribed accuracy $\varepsilon$ as a function of the two parameters $K$ and $R$. To this end, we take $\kappa=10$, and by successive approximations, we determine the smallest $N$. The value of $N$ as a function of $R$ for the four curves ($K=1,\ldots,4$) with tolerances of $\varepsilon=10^{-3}$ and $\varepsilon=10^{-5}$ are plotted in Figure~\ref{fig:NvsR}. Except for the case $K=1$, a constant value $N$ is quickly reached depending on $R$. The parameter $\Delta t$ is mainly controlled by the spectral resolution, that is, the minimum number of points to resolve a full revolution.

% --- Conclusions and appendices
\section{Conclusions}

We proposed and analysed a novel class of numerical schemes for ODEs based on the structural method. These schemes achieve extreme convergence orders and excellent spectral resolution while using compact stencils. We have established their stability and convergence. Several numerical benchmarks demonstrate that the method successfully balances running time and accuracy, resulting in superior numerical efficiency. The structural schemes analysed use the maximum number of physical equations. This leads to a novel multi-derivative Runge-Kutta formulation. Standard techniques often struggle to design and implement such methods, mainly because of the numerous non-linear order conditions. In contrast, our structural schemes determine coefficients directly from the kernel basis. This approach drastically reduces the design complexity and enables arbitrary convergence orders. An additional advantage is the accuracy of the intermediate time steps. Traditional Runge-Kutta methods treat sub-steps as auxiliary calculations. However, in the structural schemes, the values at $t_{n+r}$, $r\bint{1,R}$, achieve the same convergence order and can be used as genuine approximations. Finally, the structural method naturally connects with compact schemes commonly used in space discretisation. This opens the potential to combine structural equations in both time and space.

\section*{Acknowledgements}

The authors thank Professor Robert McLachlan for his important note on the connection between the structural method and the multi-derivative Runge-Kutta schemes.\\[0.4em]
S. Clain, M.T. Malheiro, and G.J. Machado acknowledge the financial support of the Portuguese Foundation for Science and Technology (FCT) through a national funding for projects IC\&DT with the reference 2023.16854.ICDT.\\[0.4em]
S. Clain acknowledge financial support by the Centre for Mathematics of the University of Coimbra (CMUC, https://doi.org/10.54499/UID/00324/2025) under the Portuguese Foundation for Science and Technology (FCT), Grants UID/00324/2025 and UID/PRR/00324/2025.\\[0.4em]
The research of M.T. Malheiro and G.J. Machado was financially supported by the Fundação para a Ciência e a Tecnologia (Portuguese Foundation for Science and Technology) under the scope of the project UID/00013/2025 {\tt https://doi.org/10.54499/UID/00013/2025}, Center of Mathematics of the University of Minho.\\[0.4em]
The research of R. Costa was partially financed by Portuguese Funds through FCT (Fundação para a Ciência e a Tecnologia) under the scope of the projects UIDB/05256/2025 and UIDP/05256/2025 {\tt https://doi.org/10.54499/UID/00013/2025}, Institute for Polymers and Composites of the University of Minho.\\[0.4em]
R. Costa also acknowledges the financial support of Funda\c c\~ao para a Ciência e a Tecnologia (Portuguese Foundation for Science and Technology) through the Contract-Program signed between the Funda\c c\~ao para a Ciência e a Tecnologia and the University of Minho, under the scope of the project 2023.09481.CEECIND/CP2841/CT0008, Individual Scientific Employment Stimulus Call -- 6th Edition (CEEC IND6ed).

\section*{Statement on the use of Artificial Intelligence Tools}

The document was reviewed and refined using Large Language Models to improve grammar, correct typographical errors, and enhance clarity. Any text or content generated by Artificial Intelligence is explicitly identified within the document and is utilized only to a limited extent. The authors retain full responsibility for the overall content and ideas presented.

\appendix
\section{Analytical structural equations}
\label{analytical_expression}

The structural schemes $\SK{K}{R}$ are based on the structural equations \SEE{K}{R}{S}(s), $s\bint{1,S}$, characterised by the basis $(\ba^s)_{s=1}^S$ of the kernel $\mathcal K^S\subset\mathbb{R}^M$, with $M=(K+1)(R+1)$ the size of the vectors. By construction, the structural equations are exact up to a polynomial degree $d=M-S$ and read
\[
\sum_{k=0}^K \sum_{r=0}^R a^s_{k,r}\,\Delta t^k\,\phi^{(k)}_{n+r}=0,\quad s\bint{1,K}.
\]
In practice, we use $S=R$ to design the structural method, and the structural schemes $\SK{K}{R}$ provide $R$ linearly independent equations.

Tables~\ref{tab::SE_R=1},~\ref{tab::SE_R=2}, and~\ref{tab::SE_R=3} present the analytical expressions of the coefficients $a^s_{k,r}$ for $R=1,2,3$, each with $K=1,2,3$. Additionally, the kernel method can be used to compute these coefficients numerically for any choice of $K$, $R$, and $S$.

\begin{table}[H]
\tablepolice
\centering
\caption{Structural equations coefficients for $R=1$.}
\label{tab::SE_R=1}
\begin{tabular}{@{}rrrrrrrrrrrr@{}}
\toprule
K & s && $a^{s}_{0,0}$ & $a^{s}_{1,0}$ & $a^{s}_{2,0}$ & $a^{s}_{3,0}$ && $a^{s}_{0,1}$ & $a^{s}_{1,1}$ & $a^{s}_{2,1}$ & $a^{s}_{3,1}$\\
\midrule
1 & 1 && -2 & -1 & & && 2 & -1 & &\\
\midrule
2 & 1 && -12 & -6 & -1 & && 12 & -6 & 1 &\\
\midrule
3 & 1 && 120 & 60 & 12 & 1 && -120 & 60 & -12 & 1\\
\bottomrule
\end{tabular}
\end{table}

\begin{table}[H]
\tablepolice
\centering
\caption{Structural equations coefficients for $R=2$.}
\label{tab::SE_R=2}
\begin{tabular}{@{}rrrrrrrrrrrrrrrrrr@{}}
\toprule
K && s & & $a^{s}_{0,0}$ & $a^{s}_{1,0}$ & $a^{s}_{2,0}$ & $a^{s}_{3,0}$ & \phantom{a} & $a^{s}_{0,1}$ & $a^{s}_{1,1}$ & $a^{s}_{2,1}$ & $a^{s}_{3,1}$ & \phantom{a} & $a^{s}_{0,2}$ & $a^{s}_{1,2}$ & $a^{s}_{2,2}$ & $a^{s}_{3,2}$\\
\midrule
\multirow{2}{*}{1}
&& 1 && -3 & -1 & & && 0 & -4 & & && 3 & -1 & &\\
&& 2 && 2 & 1 & & && -4 & 0 & & && 2 & -1 & &\\
\midrule
\multirow{2}{*}{2}
&& 1 && 24 & 9 & 1 & && -48 & 0 & -8 & && 24 & -9 & 1 &\\
&& 2 && -15 & -7 & -1 & && 0 & -16 & 0 & && 15 & -7 & 1 &\\
\midrule
\multirow{2}{*}{3}
&& 1 && 315 & 123 & 18 & 1 && 0 & 384 & 0 & 16 && -315 & 123 & -18 & 1\\
&& 2 && -192 & -87 & -15 & -1 && 384 & 0 & 48 & 0 && -192 & 87 & -15 & 1\\
\bottomrule
\end{tabular}
\end{table}

\begin{table}[H]
\tablepolice
\centering
\caption{Structural equations coefficients for $R=3$.}
\label{tab::SE_R=3}
\begin{tabular}{@{}rrrrrrrrrrrrrrrrrrrr@{}}
\toprule
K &&s& & $a^{s}_{0,0}$ & $a^{s}_{1,0}$ & $a^{s}_{2,0}$ & $a^{s}_{3,0}$ & $a^{s}_{0,1}$ & $a^{s}_{1,1}$ & $a^{s}_{2,1}$ & $a^{s}_{3,1}$ & $a^{s}_{0,2}$ & $a^{s}_{1,2}$ & $a^{s}_{2,2}$ & $a^{s}_{3,2}$ & $a^{s}_{0,3}$ & $a^{s}_{1,3}$ & $a^{s}_{2,3}$ & $a^{s}_{3,3}$\\
\midrule
\multirow{3}{*}{1}
&& 1& & 11 & 3 & & & 27 & 27 & & & -27 & 27 & & & -11 & 3 & &\\
&&2 && 19 & 6 & & & 0 & 27 & & & -27 & 0 & & & 8 & -3 & &\\
&& 3 && -13 & -6 & & & 27 & 0 & & & -27 & 0 & & & 13 & -6 & &\\
\midrule
\multirow{3}{*}{2}
&& 1 && -103 & -33 & -3 & & 729 & 243 & 81 & & -729 & 243 & -81 & & 103 & -33 & 3 &\\
&& 2 && 173 & 60 & 6 & & -486 & -81 & -81 & & 243 & -162 & 0 & & 70 & -27 & 3 &\\
&& 3 && -113 & -48 & -6 & & -81 & -162 & 0 & & 81 & -162 & 0 & & 113 & -48 & 6 &\\
\midrule
\multirow{3}{*}{3}
&& 1 && 4711 & 1599 & 198 & 9 & 45927 & 28431 & 4374 & 729 & -45927 & 28431 & -4374 & 729 & -4711 & 1599 & -198 & 9\\
&& 2 && -7823 & -2802 & -369 & -18 & -17496 & -19683 & -2187 & -729 & 28431 & -8748 & 2187 & 0 & -3112 & 1203 & -171 & 9\\
&& 3 && 5021 & 2064 & 315 & 18 & -19683 & -4374 & -2187 & 0 & 19683 & -4374 & 2187 & 0 & -5021 & 2064 & -315 & 18\\
\bottomrule
\end{tabular}
\end{table}

\section{Post-processing structural equations}
\label{appendix::postproc}

Assuming that we have performed the computation with a structural scheme $\SK{K}{R}$, post-processing structural equations enable computing higher derivatives of order $p$ with $p>K$ at the final time step $t_N$ using the approximate solution and derivatives from the backward time steps. We then propose an approximation for $\phi_N^{(p)}$ of convergence order $O$ using a relation in the form
\[
\phi_N^{(p)}=\sum_{i=0}^{I_{p}}\sum_{k=0}^K a^{p}_{ik}\,\Delta t^k\,\phi_{N-i}^{(k)},
\]
where the convergence order of the method and the coefficients are given in the following tables.

\subsection{Case $K=1$}

We assume that the approximations $\phi^{(k)}_{-i}$ are available at any point $T-i\Delta t$, $i\bint{0,I_d}$, and $k=0,1$. Table~\ref{tab::post_processing_K=1} provides the formula and the corresponding convergence order for the approximation $\phi_N^{(p)}$ for $p=2,3,4$.

\begin{table}[H]
\tablepolice
\centering
\caption{Coefficients of the post-processing structural equations with $K=1$ for $p=2,3,4$.}
\label{tab::post_processing_K=1}
\begin{tabular}{@{}rrrrl@{}}
\toprule
p & Ip & O && $\phi_N^{(p)}$\\
\midrule
2 & 1 & 2 &&
\specialcell{l}{$\begin{aligned}[t]
\phi_N^{(2)} &= 6\phi_{N-1}^{(0)}-6\phi_{N}^{(0)}\\
&\quad+\Delta t (2\phi_{N-1}^{(1)}+4\phi_{N}^{(1)})
\end{aligned}$}\\[3em]
2 & 2 & 4 &&
\specialcell{l}{$\begin{aligned}[t]
\phi_N^{(2)} &=\tfrac{7}{2}\phi_{N-2}^{(0)}+8\phi_{N-1}^{(0)}-\tfrac{23}{2}\phi_{N}^{(0)}\\
&\quad+\Delta t (\phi_{N-2}^{(1)}+8\phi_{N-1}^{(1)}+6\phi_{N}^{(1)})
\end{aligned}$}\\[3em]
2 & 3 & 6 &&
\specialcell{l}{$\begin{aligned}[t]
\phi_N^{(2)} &=\tfrac{8}{3}\phi_{N-3}^{(0)}+\tfrac{27}{2}\phi_{N-2}^{(0)}-\tfrac{97}{6}\phi_{N}^{(0)}\\
&\quad+\Delta t (\tfrac{2}{3}\phi_{N-3}^{(1)}+9\phi_{N-2}^{(1)}+18\phi_{N-1}^{(1)}+\tfrac{22}{3}\phi_{N}^{(1)})
\end{aligned}$}\\
\midrule
3 & 2 & 3 &&
\specialcell{l}{$\begin{aligned}[t]
\phi_N^{(3)} &=\tfrac{51}{2}\phi_{N-2}^{(0)}+24\phi_{N-1}^{(0)}-\tfrac{99}{2}\phi_{N}^{(0)}\\
&\quad+\Delta t (\tfrac{15}{2}\phi_{N-2}^{(1)}+48\phi_{N-1}^{(1)}+\tfrac{39}{2}\phi_{N}^{(1)})
\end{aligned}$}\\[3em]
3 & 3 & 5 &&
\specialcell{l}{$\begin{aligned}[t]
\phi_N^{(3)} &=\tfrac{238}{9}\phi_{N-3}^{(0)}+\tfrac{243}{2}\phi_{N-2}^{(0)}-54\phi_{N-1}^{(0)}-\tfrac{1691}{18}\phi_{N}^{(0)}\\
&\quad+\Delta t (\tfrac{20}{3}\phi_{N-3}^{(1)}+\tfrac{171}{2}\phi_{N-2}^{(1)}+144\phi_{N-1}^{(1)}+\tfrac{193}{6}\phi_{N}^{(1)})
\end{aligned}$}\\[3em]
3 & 4 & 7 &&
\specialcell{l}{$\begin{aligned}[t]
\phi_N^{(3)} &=\tfrac{1241}{48}\phi_{N-4}^{(0)}+\tfrac{2176}{9}\phi_{N-3}^{(0)}+171\phi_{N-2}^{(0)}-\tfrac{896}{3}\phi_{N-1}^{(0)}-\tfrac{20155}{144}\phi_{N}^{(0)}\\
&\quad+\Delta t (\tfrac{47}{8}\phi_{N-4}^{(1)}+\tfrac{368}{3}\phi_{N-3}^{(1)}+396\phi_{N-2}^{(1)}+304\phi_{N-1}^{(1)}+\tfrac{1045}{24}\phi_{N}^{(1)})
\end{aligned}$}\\
\midrule
4 & 2 & 2 &&
\specialcell{l}{$\begin{aligned}[t]
\phi_N^{(4)} &= 78\phi_{N-2}^{(0)}+24\phi_{N-1}^{(0)}-102\phi_{N}^{(0)}\\
&\quad+\Delta t (24\phi_{N-2}^{(1)}+120\phi_{N-1}^{(1)}+36\phi_{N}^{(1)})
\end{aligned}$}\\[3em]
4 & 3 & 4 &&
\specialcell{l}{$\begin{aligned}[t]
\phi_N^{(4)} &=\tfrac{400}{3}\phi_{N-3}^{(0)}+540\phi_{N-2}^{(0)}-360\phi_{N-1}^{(0)}-\tfrac{940}{3}\phi_{N}^{(0)}\\
&\quad+\Delta t (34\phi_{N-3}^{(1)}+408\phi_{N-2}^{(1)}+582\phi_{N-1}^{(1)}+96\phi_{N}^{(1)})
\end{aligned}$}\\[3em]
4 & 4 & 6 &&
\specialcell{l}{$\begin{aligned}[t]
\phi_N^{(4)} &=\tfrac{11879}{72}\phi_{N-4}^{(0)}+\tfrac{13328}{9}\phi_{N-3}^{(0)}+\tfrac{1659}{2}\phi_{N-2}^{(0)}-\tfrac{16912}{9}\phi_{N-1}^{(0)}-\tfrac{42931}{72}\phi_{N}^{(0)}\\
&\quad+\Delta t (\tfrac{113}{3}\phi_{N-4}^{(1)}+\tfrac{2296}{3}\phi_{N-3}^{(1)}+2343\phi_{N-2}^{(1)}+\tfrac{4712}{3}\phi_{N-1}^{(1)}+\tfrac{995}{6}\phi_{N}^{(1)})
\end{aligned}$}\\
\bottomrule
\end{tabular}
\end{table}

\subsection{Case $K=2$}

We proceed with $K=2$ and provide in Table~\ref{tab::post_processing_K=2} the post-processing structural equations that provide an approximation of $\phi_N^{(p)}$ for $p=3,4$.

\begin{table}[H]
\tablepolice
\centering
\caption{Coefficients of the post-processing structural equations with $K=2$ for $p=3,4$.}
\label{tab::post_processing_K=2}
\begin{tabular}{@{}rrrrl@{}}
\toprule
\texttt{p} & \texttt{Ip} & \texttt{O} && $\phi_N^{(p)}$\\
\midrule
3 & 2 & 6 &&
\specialcell{l}{$\begin{aligned}[t]
\phi_N^{(3)} &= 87\phi_{N-2}^{(0)}-384\phi_{N-1}^{(0)}+297\phi_{N}^{(0)}\\
&\quad+2\Delta t (15\phi_{N-2}^{(1)}-48\phi_{N-1}^{(1)}-72\phi_{N}^{(1)})\\
&\quad+\Delta t^2 (3\phi_{N-2}^{(2)}-48\phi_{N-1}^{(2)}+27\phi_{N}^{(2)})
\end{aligned}$}\\[4.5em]
\texttt{3} & \texttt{3} & \texttt{9} &&
\specialcell{l}{$\begin{aligned}[t]
\phi_N^{(3)} &=-\tfrac{344}{9}\phi_{N-3}^{(0)}+\tfrac{891}{2}\phi_{N-2}^{(0)}-648\phi_{N-1}^{(0)}+\tfrac{4333}{18}\phi_{N}^{(0)}\\
&\quad+\Delta t (-\tfrac{35}{3}\phi_{N-3}^{(1)}+162\phi_{N-2}^{(1)}+81\phi_{N-1}^{(1)}-103\phi_{N}^{(1)})\\
&\quad+\Delta t^2 (-\phi_{N-3}^{(2)}+\tfrac{81}{2}\phi_{N-2}^{(2)}-81\phi_{N-1}^{(2)}+\tfrac{33}{2}\phi_{N}^{(2)})
\end{aligned}$}\\[4.5em]
\texttt{3} & \texttt{4} & \texttt{12} &&
\specialcell{l}{$\begin{aligned}[t]
\phi_N^{(3)} &=\tfrac{559}{16}\phi_{N-4}^{(0)}-\tfrac{8768}{9}\phi_{N-3}^{(0)}+2592\phi_{N-2}^{(0)}-1984\phi_{N-1}^{(0)}+\tfrac{47705}{144}\phi_{N}^{(0)}\\
&\quad+\Delta t (\tfrac{39}{4}\phi_{N-4}^{(1)}-\tfrac{1088}{3}\phi_{N-3}^{(1)}+324\phi_{N-2}^{(1)}+576\phi_{N-1}^{(1)}-130\phi_{N}^{(1)})\\
&\quad+\Delta t^2 (\tfrac{3}{4}\phi_{N-4}^{(2)}-64\phi_{N-3}^{(2)}+324\phi_{N-2}^{(2)}-192\phi_{N-1}^{(2)}+\tfrac{75}{4}\phi_{N}^{(2)})
\end{aligned}$}\\
\midrule
\texttt{4} & \texttt{2} & \texttt{5} &&
\specialcell{l}{$\begin{aligned}[t]
\phi_N^{(4)} &=\tfrac{1359}{2}\phi_{N-2}^{(0)}-2304\phi_{N-1}^{(0)}+\tfrac{3249}{2}\phi_{N}^{(0)}\\
&\quad+\Delta t (237\phi_{N-2}^{(1)}-480\phi_{N-1}^{(1)}-702\phi_{N}^{(1)})\\
&\quad+\Delta t^2 (24\phi_{N-2}^{(2)}-336\phi_{N-1}^{(2)}+99\phi_{N}^{(2)})
\end{aligned}$}\\[4.5em]
\texttt{4} & \texttt{3} & \texttt{8} &&
\specialcell{l}{$\begin{aligned}[t]
\phi_N^{(4)} &=-\tfrac{7060}{9}\phi_{N-3}^{(0)}+\tfrac{17415}{2}\phi_{N-2}^{(0)}-11340\phi_{N-1}^{(0)}+\tfrac{61505}{18}\phi_{N}^{(0)}\\
&\quad+\Delta t (-\tfrac{2162}{9}\phi_{N-3}^{(1)}+3159\phi_{N-2}^{(1)}+2106\phi_{N-1}^{(1)}-\tfrac{11728}{9}\phi_{N}^{(1)})\\
&\quad+\Delta t^2 (-\tfrac{62}{3}\phi_{N-3}^{(2)}+810\phi_{N-2}^{(2)}-1458\phi_{N-1}^{(2)}+157\phi_{N}^{(2)})
\end{aligned}$}\\[4.5em]
\texttt{4} & \texttt{4} & \texttt{11} &&
\specialcell{l}{$\begin{aligned}[t]
\phi_N^{(4)} &=\tfrac{26751}{32}\phi_{N-4}^{(0)}-\tfrac{205888}{9}\phi_{N-3}^{(0)}+58968\phi_{N-2}^{(0)}-42432\phi_{N-1}^{(0)}+\tfrac{1585289}{288}\phi_{N}^{(0)}\\
&\quad+\Delta t (\tfrac{1869}{8}\phi_{N-4}^{(1)}-76736\phi_{N-3}^{(1)}+6804\phi_{N-2}^{(1)}+13632\phi_{N-1}^{(1)}-\tfrac{69295}{36}\phi_{N}^{(1)})\\
&\quad+\Delta t^2 (18\phi_{N-4}^{(2)}-\tfrac{4544}{3}\phi_{N-3}^{(2)}+7452\phi_{N-2}^{(2)}-4032\phi_{N-1}^{(2)}+\tfrac{835}{4}\phi_{N}^{(2)})
\end{aligned}$}\\
\bottomrule
\end{tabular}
\end{table}

\subsection{Case $K=3$}

Finally, for $K=3$ we provide in Table~\ref{tab::post_processing_K=2} the post-processing structural equations that provide an approximation of $\phi_N^{(4)}$.

\begin{table}[ht]
\tablepolice
\centering
\caption{Coefficients of the post-processing structural equations with $K=3$ for $p=4$.}
\label{tab::post_processing_K=3}
\begin{tabular}{@{}rrrrl@{}}
\toprule
\texttt{p} & \texttt{Ip} & \texttt{O} && $\phi_N^{(p)}$\\
\midrule
4 & 1 & 4 &&
\specialcell{l}{$\begin{aligned}[t]
\phi_N^{(4)} &= 840\phi_{N-1}^{(0)}-840\phi_{N}^{(0)}\\
&\quad+\Delta t (360\phi_{N-1}^{(1)}+480\phi_{N}^{(1)})\\
&\quad+\Delta t^2 (60\phi_{N-1}^{(2)}-120\phi_{N}^{(2)})\\
&\quad+\Delta t^3 (4\phi_{N-1}^{(3)}+16\phi_{N}^{(3)})
\end{aligned}$}\\[6.23em]
4 & 2 & 8 &&
\specialcell{l}{$\begin{aligned}[t]
\phi_N^{(4)} &= 1545\phi_{N-2}^{(0)}+1920\phi_{N-1}^{(0)}-\tfrac{5385}{2}\phi_{N}^{(0)}\\
&\quad+\Delta t (285\phi_{N-2}^{(1)}+1920\phi_{N-1}^{(1)}+1260\phi_{N}^{(1)})\\
&\quad+\Delta t^2 (39\phi_{N-2}^{(2)}+192\phi_{N-1}^{(2)}-246\phi_{N}^{(2)})\\
&\quad+\Delta t^3 (2\phi_{N-2}^{(3)}+64\phi_{N-1}^{(3)}+24\phi_{N}^{(3)})
\end{aligned}$}\\[6.23em]
4 & 3 & 12 &&
\specialcell{l}{$\begin{aligned}[t]
\phi_N^{(4)} &=\tfrac{21160}{27}\phi_{N-3}^{(0)}+\tfrac{27945}{2}\phi_{N-2}^{(0)}-9720\phi_{N-1}^{(0)}-\tfrac{271955}{54}\phi_{N}^{(0)}\\
&\quad+\Delta t (\tfrac{772}{3}\phi_{N-3}^{(1)}+7533\phi_{N-2}^{(1)}+10692\phi_{N-1}^{(1)}+\tfrac{18844}{9}\phi_{N}^{(1)})\\
&\quad+\Delta t^2 (\tfrac{92}{3}\phi_{N-3}^{(2)}+1215\phi_{N-2}^{(2)}-972\phi_{N-1}^{(2)}-\tfrac{1066}{3}\phi_{N}^{(2)})\\
&\quad+\Delta t^3 (\tfrac{4}{3}\phi_{N-3}^{(3)}+162\phi_{N-2}^{(3)}+324\phi_{N-1}^{(3)}+\tfrac{88}{3}\phi_{N}^{(3)})
\end{aligned}$}\\
\bottomrule
\end{tabular}
\end{table}

\section{Scheme stability data}
\label{anexe::stability}

In Section~\ref{subsec_Stab}, we established the stability condition of the structural schemes \SK{K}{R} with Theorem~\ref{TheorStab} if the transfer function reads $\chi_R(z)=p(z)/p(-z)$ and $p(z)$ is a Hurwitz polynomial in $\mathbb{C}$.
In this appendix, we first provide the expressions for $\chi_R(z)$, in Tables~\ref{table_stab_K1},~\ref{Table_stab_K2}, and~\ref{table_stab_K3} for $K,R=1,2,3$ obtained with the Symbolic Toolbox of MATLAB. We then check that the polynomials satisfy the Hurwitz property.

\subsection{Case $K=1$}

\begin{table}[ht]
\centering
\caption{Transfer functions of the structural schemes \SK{1}{R}.}
\label{table_stab_K1}
\begin{tabular}{@{}lll@{}}
\toprule
scheme & $R$ & $\chi_R(z)$\\
\midrule
\SK{1}{1} & $1$ & $\frac{2+z}{2-z}$\\[0.25cm]
\SK{1}{2} & $2$ & $\frac{z^2+3z+3}{z^2-3z+3}$\\[0.25cm]
\SK{1}{3} & $3$ & $\frac{3z^3+11z^2+18z+12}{-3z^3+11z^2-18z+12}$\\
\bottomrule
\end{tabular}
\end{table}

%\SK{1}{4} & $4$ & $\frac{12z^4+50z^3+105z^2+120z+60}{12z^4-50z^3+105z^2-120z+60}$

\begin{itemize}
\item \noindent \SK{1}{1}: unconditional second-order accurate scheme with $p(z)=z+2$ and its only zero ($z=-2$) is in the left half-plane.
\item \noindent \SK{1}{2}: unconditional fourth-order accurate scheme with $p(z)=z^2+3z+3$ and
\[
H_{12}=\begin{bmatrix}
3 & 0\\
1& 3\\
\end{bmatrix}.
\]
Its determinant is positive.
\item \noindent \SK{1}{3}: unconditional fourth-order accurate scheme with $p(z)=3z^3+11z^2+18z+12$ and
\[
H_{13}=\begin{bmatrix}
11 & 12 &0\\
3& 18 &0\\
0 &11 & 12
\end{bmatrix}.
\]
All its principal minors $m_i$ for $i\bint{1,3}$ of $\det H_{13}$ are strictly positive.
%\item \noindent \SK{1}{4}: unconditional sixth order scheme with $p(z)=12z^4+50z^3+105z^2+120z+60$ and
%\[
%H_{14}=\begin{bmatrix}
% 50 & 120 &0&0\\
% 12& 105 &60&0\\
% 0 &50 & 120 &0\\
% 0 &12& 105 &60
%\end{bmatrix}.
%\]
%All its principal minors $m_i$ for $i\bint{1,4}$ of $\det H_{14}$ are strictly positive.
\end{itemize}

\subsection{Case $K=2$}

\begin{table}[H]
\centering
\caption{Transfer functions of the structural schemes \SK{2}{R}.}
\begin{tabular}{@{}lll@{}}
\toprule
scheme & $R$ & $\chi_R(z)$\\
\midrule
\SK{2}{1} & $1$ & $\frac{z^2+6z+12}{z^2-6z+12}$\\[0.25cm]
\SK{2}{2} & $2$ & $\frac{z^4+9z^3+39z^2+90z+90}{z^4-9z^3+39z^2-90z+90}$\\[0.25cm]
\SK{2}{3} & $3$ & $\frac{3z^6+33z^5+193z^4+720z^3+1740z^2+2520z+1680}{3z^6-33z^5+193z^4-720z^3+1740z^2-2520z+1680}$\\
\bottomrule
\end{tabular}\label{Table_stab_K2}
\end{table}

\begin{itemize}
\item \noindent \SK{2}{1}: unconditional fourth-order accurate scheme with $p(z)=z^2+6z+12$ and \[H_{21}=\begin{bmatrix}
6 & 0\\
1& 12\\
\end{bmatrix}.\] Its determinant is positive.

\item \noindent \SK{2}{2}: unconditional sixth-order accurate scheme with $p(z)=z^4+9z^3+39z^2+90z+90$ and \[H_{22}=\begin{bmatrix}
9 & 90 &0&0\\
1& 39 &90&0\\
0 &9 & 90 &0\\
0 &1& 39 &90
\end{bmatrix}.\] All principal minors $m_i$ for $i\bint{1,4}$ of $\det H_{22}$ are strictly positive.

\item \noindent \SK{2}{3}: unconditional eight-order accurate scheme with $p(z)=3z^6+33z^5+193z^4+720z^3+1740z^2+2520z+1680$ and
\[
H_{23}=\begin{bmatrix}
33 & 720 & 2520 & 0 & 0 & 0\\
3 & 193 & 1740 & 1680 & 0 & 0\\
0 & 33 & 720 & 2520 & 0 & 0\\
0 & 3 & 193 & 1740 & 1680 & 0\\
0 & 0 & 33 & 720 & 2520 & 0\\
0 & 0 & 3 & 193 & 1740 & 1680\\
\end{bmatrix}.\]
All principal minors $m_i$ for $i\bint{1,6}$ of $\det H_{23}$ are strictly positive.
\end{itemize}

\subsection{Case $K=3$}

\begin{table}[H]
\centering
\caption{Transfer functions of the structural schemes \SK{3}{R}.}
\begin{tabular}{@{}lll@{}}
\toprule
scheme & $R$ & $\chi_R(z)$\\
\midrule
\SK{3}{1} & $1$ & $\frac{z^3+12z^2+60z+120}{-z^3+12z^2-60z+120}$\\[0.25cm]
\SK{3}{2} & $2$ & $\frac{z^6+18z^5+165z^4+945z^3+3465z^2+7560z+7560}{z^6-18z^5+165z^4-945z^3+3465z^2-7560z+7560}$\\[0.25cm]
\SK{3}{3} & $3$ & $\frac{9z^9+198z^8+2355z^7+19075z^6+112770z^5+494760z^4+1587600z^3+3553200z^2+4989600z+3326400}{-9z^9+198z^8-2355z^7+19075z^6-112770z^5+494760z^4-1587600z^3+3553200z^2-4989600z+3326400}$\\
\bottomrule
\end{tabular}\label{table_stab_K3}
\end{table}

\begin{itemize}
\item \noindent \SK{3}{1}: unconditional sixth-order accurate scheme with $p(z)=z^3+12z^2+60z+120$ and \[H_{31}=\begin{bmatrix}
12 & 120 &0\\
1& 60 &0\\
0 &12 & 120
\end{bmatrix}.\] All principal minors $m_i$ for $i\bint{1,3}$ of $\det H_{31}$ are strictly positive.
\item \noindent \SK{3}{2}: unconditional tenth-order accurate scheme with $p(z)=z^6+18z^5+165z^4+945z^3+3465z^2+7560z+7560$ and \[H_{32}=\begin{bmatrix}
18 & 945 & 7560 & 0 & 0 & 0\\
1 & 165 & 3465 & 7560 & 0 & 0\\
0 & 18 & 945 & 7560 & 0 & 0\\
0 & 1 & 165 & 3465 & 7560 & 0\\
0 & 0 & 18 & 945 & 7560 & 0\\
0 & 0 & 1 & 165 & 3465 & 7560\\
\end{bmatrix}.\]
All principal minors $m_i$, for $i\bint{1,6}$ of $\det H_{32}$ are strictly positive.
\item \noindent \SK{3}{3}: unconditional twelfth-order accurate scheme with $p(z)=9z^9+198z^8+2355z^7+19075z^6+112770z^5+494760z^4+1587600z^3+3553200z^2+4989600z+3326400$ and
\[
H_{33}=\begin{bmatrix}
198 & 19075 & 494760 & 3553200 & 3326400 & 0 &0 &0 &0\\
9 & 2355 & 112770 & 15876000 & 4989600 & 0 &0 &0 &0\\
0 & 198 & 19075 & 494760 & 3553200 & 3326400 & 0 &0 &0\\
0 & 9 & 2355 & 112770 & 15876000 & 4989600 & 0 &0 &0\\
0 & 0 & 198 & 19075 & 494760 & 3553200 & 3326400 & 0 &0\\
0 & 0 & 9 & 2355 & 112770 & 15876000 & 4989600 & 0 &0\\
0& 0& 0& 198 & 19075 & 494760 & 3553200 & 3326400 & 0\\
0& 0& 0& 9 & 2355 & 112770 & 15876000 & 4989600 & 0\\
0& 0& 0& 0& 198 & 19075 & 494760 & 3553200 & 3326400\\
\end{bmatrix}.
\]
All principal minors $m_i$ for $i\bint{1,9}$ of $\det H_{33}$ are strictly positive.
\end{itemize}
Hence, all schemes are A-stable.

\section{Butcher Tableau for the collocation method} \label{app::colocation}
\subsection{The 4th-Order Method (2-Stage Gauss-Legendre)}
The 2-stage 4th-order Butcher tableau.
\begin{equation}
\begin{array}{c|cc}
\frac{1}{2} - \frac{\sqrt{3}}{6} & \frac{1}{4} & \frac{1}{4} - \frac{\sqrt{3}}{6} \\
\frac{1}{2} + \frac{\sqrt{3}}{6} & \frac{1}{4} + \frac{\sqrt{3}}{6} & \frac{1}{4} \\
\hline
& \frac{1}{2} & \frac{1}{2}
\end{array}
\end{equation}

\subsection{The 6th-Order Method (3-Stage Gauss-Legendre)}
The 3-stage 6th-order Butcher tableau
\begin{equation}
\begin{array}{c|ccc}
\frac{1}{2} - \frac{\sqrt{15}}{10} & \frac{5}{36} & \frac{2}{9} - \frac{\sqrt{15}}{15} & \frac{5}{36} - \frac{\sqrt{15}}{30} \\
\frac{1}{2} & \frac{5}{36} + \frac{\sqrt{15}}{24} & \frac{2}{9} & \frac{5}{36} - \frac{\sqrt{15}}{24} \\
\frac{1}{2} + \frac{\sqrt{15}}{10} & \frac{5}{36} + \frac{\sqrt{15}}{30} & \frac{2}{9} + \frac{\sqrt{15}}{15} & \frac{5}{36} \\
\hline
& \frac{5}{18} & \frac{4}{9} & \frac{5}{18}
\end{array}
\end{equation}

\subsection{8th-Order Gauss-Legendre Butcher Tableau}
The 4-stage, 8th-order Butcher Tableau with 32 digits of precision.
\begin{center}
\scriptsize
\begin{tabular}{ccc}
\textbf{Stage Nodes ($c_i$)} &\phantom{mmmm} &\textbf{Weights ($b_i$)} \\[0.5em]
$
\begin{aligned}
c_1 &= 0.06943184420297371238854284555813 \\
c_2 &= 0.33000947820757186759865892120042 \\
c_3 &= 0.66999052179242813240134107879958 \\
c_4 &= 0.93056815579702628761145715444187
\end{aligned}
$
&
\phantom{mmmm}
&
$
\begin{aligned}
b_1 &= 0.17392742256872692868653197461040 \\
b_2 &= 0.32607257743127307131346802538960 \\
b_3 &= 0.32607257743127307131346802538960 \\
b_4 &= 0.17392742256872692868653197461040
\end{aligned}
$
\end{tabular}
\end{center}
\centerline{\scriptsize  {\bf Stage Matrix Entries} ($a_{ij}$)}
\begin{center}
\scriptsize % ou \footnotesize, \small
\begin{minipage}{0.35\textwidth}
\begin{align*}
           &\hskip6em \textbf{Row 1}\\
    a_{11} &=  0.08696371128436346434326598730520 \\
    a_{12} &= -0.02619985959089069670008594246830 \\
    a_{13} &=  0.01509374026369527376785640232490 \\
    a_{14} &= -0.00642574775419432902249360160367
\end{align*}
\begin{align*}
           &\hskip6em \textbf{Row 2}\\
    a_{21} &= 0.18738361514757530663415174092415 \\
    a_{22} &= 0.16303628871563653565673401269480 \\
    a_{23} &= -0.02677937984187063467657150117387 \\
    a_{24} &= 0.00636915418623065998434466875535
\end{align*}
\end{minipage}
\hskip 1em
\begin{minipage}{0.35\textwidth}
\begin{align*}
           &\hskip6em \textbf{Row 3}\\
    a_{31} &= 0.16755826838249626870218730585505 \\
    a_{32} &= 0.35285195727314370598990176843438 \\
    a_{33} &= 0.16303628871563653565673401269480 \\
    a_{34} &= -0.01345599258210340356560943180470
\end{align*}
\begin{align*}
           &\hskip6em \textbf{Row 4}\\
    a_{41} &= 0.18035317032292125770902557621407 \\
    a_{42} &= 0.31097883716757779754561162306470 \\
    a_{43} &= 0.35227243702216376801355403061452 \\
    a_{44} &= 0.08696371128436346434326598730520
\end{align*}
\end{minipage}
\end{center}

\subsection*{10th-Order Gauss-Legendre Butcher Tableau}
The 5-stage, 10th-order Butcher Tableau with 32 digits of precision.
\begin{center}
\scriptsize
\begin{tabular}{ccc}
\textbf{Stage Nodes ($c_i$)} &\phantom{mmmm} &\textbf{Weights ($b_i$)} \\[0.5em]
$
\begin{aligned}
c_1 &= 0.04691007703066800360118656085031 \\
c_2 &= 0.23076534494715845448184278821844 \\
c_3 &= 0.50000000000000000000000000000000 \\
c_4 &= 0.76923465505284154551815721178156 \\
c_5 &= 0.95308992296933199639881343914969
\end{aligned}
$
&
\phantom{mmmm}
&
$
\begin{aligned}
b_1 &= 0.11846344252809454375713202035994 \\
b_2 &= 0.23931433529974125961272126693111 \\
b_3 &= 0.28444444444444444444444444444444  \\
b_4 &= 0.23931433529974125961272126693111 \\
b_5 &= 0.11846344252809454375713202035994
\end{aligned}
$
\end{tabular}
\end{center}

\begin{center}
\scriptsize % ou \footnotesize, \small
\textbf{ Stage Matrix Entries ($a_{ij}$)}\\
\begin{minipage}[t]{0.45\textwidth}
\begin{align*}
           &\hskip6em  \textbf{Row 1}\\
    a_{11} &=  0.05923172126404727187856601017997 \\
    a_{12} &= -0.01783063595561081525091763781744 \\
    a_{13} &=  0.00913988019488390772702758169116 \\
    a_{14} &= -0.00494576326190820521316694605917 \\
    a_{15} &=  0.00131487478925584445967755285579
\end{align*}
\begin{align*}
           &\hskip6em  \textbf{Row 2}\\
    a_{21} &=  0.12467364402636901844577884784116 \\
    a_{22} &=  0.11965716764987062980636063346555 \\
    a_{23} &= -0.01831885918739908856658934526647 \\
    a_{24} &=  0.00739976735508892694553259969188 \\
    a_{25} &= -0.00184637489677103214923994751368
\end{align*}
\begin{align*}
           &\hskip6em  \textbf{Row 3}\\
    a_{31} &=  0.12151608752230752538964344402283 \\
    a_{32} &=  0.26127548981650392306774641973053 \\
    a_{33} &=  0.14222222222222222222222222222222  \\
    a_{34} &= -0.02195982511624518268046768666164 \\
    a_{35} &=  0.00294602555521151199341883713653
\end{align*}
\end{minipage}
\hskip 1em
\begin{minipage}[t]{0.45\textwidth}
\begin{align*}
           &\hskip6em  \textbf{Row 4}\\
    a_{41} &=  0.12030981742486557590637196787362 \\
    a_{42} &=  0.23191456794265233266718866723923 \\
    a_{43} &=  0.30276330363184353298642084661853 \\
    a_{44} &=  0.11965716764987062980636063346555 \\
    a_{45} &= -0.00621020159639052585250444372982
\end{align*}
\begin{align*}
           &\hskip6em  \textbf{Row 5}\\
    a_{51} &=  0.11714856773883869929745446750415 \\
    a_{52} &=  0.24426009856164946482588821299028 \\
    a_{53} &=  0.27530456424956053671741686275328 \\
    a_{54} &=  0.25714497125535207486363890474855 \\
    a_{55} &=  0.05923172126404727187856601017997
\end{align*}
\end{minipage}
\end{center}

\subsubsection{12th-Order Gauss-Legendre Butcher Tableau}
The 6-stage, 12th-order Butcher Tableau with 32 digits of precision
\begin{center}
\scriptsize
\begin{tabular}{ccc}
\textbf{Stage Nodes ($c_i$)} &\phantom{mmmm} &\textbf{Weights ($b_i$)} \\[0.5em]
$
\begin{aligned}
    c_1 &= 0.03376524289842398612197577545465 \\
    c_2 &= 0.16939530676686774311894236047318 \\
    c_3 &= 0.38069040695840154571932338148970 \\
    c_4 &= 0.61930959304159845428067661851030 \\
    c_5 &= 0.83060469323313225688105763952682 \\
    c_6 &= 0.96623475710157601387802422454535
\end{aligned}
$
&
\phantom{mmmm}
&
$
\begin{aligned}
    b_1 &= 0.08566224618958517252277579178949 \\
    b_2 &= 0.18038078652406930335017409240366 \\
    b_3 &= 0.23395696728634552412705011580685 \\
    b_4 &= 0.23395696728634552412705011580685 \\
    b_5 &= 0.18038078652406930335017409240366 \\
    b_6 &= 0.08566224618958517252277579178949
\end{aligned}
$
\end{tabular}
\end{center}

\begin{center}
\scriptsize % ou \footnotesize, \small
\textbf{ Stage Matrix Entries ($a_{ij}$)}\\
\begin{minipage}[t]{0.45\textwidth}
\begin{align*}
           &\hskip6em  \textbf{Row 1}\\
    a_{11} &=  0.04283112309479258626138789589475 \\
    a_{12} &= -0.01255762837335661011195655554625 \\
    a_{13} &=  0.00557476646535555986877903822818 \\
    a_{14} &= -0.00310860551065116744839818815124 \\
    a_{15} &=  0.00127116790938676239162985331562 \\
    a_{16} &= -0.00024558068710314483946626828641
\end{align*}
\begin{align*}
           &\hskip6em  \textbf{Row 2}\\
    a_{21} &=  0.08865487711929944062145889369324 \\
    a_{22} &=  0.09019039326203465167508704620183 \\
    a_{23} &= -0.01235650742136195759187383216892 \\
    a_{24} &=  0.00539168913926839352726749069150 \\
    a_{25} &= -0.00188612187315147571380907481363 \\
    a_{26} &=  0.00029097654077869060081183686916
\end{align*}
\begin{align*}
           &\hskip6em  \textbf{Row 3}\\
    a_{31} &=  0.08728956977264879555234249112444 \\
    a_{32} &=  0.19159930768407428399587442111893 \\
    a_{33} &=  0.11697848364317276206352505790343 \\
    a_{34} &= -0.01869871029272891350179965416045 \\
    a_{35} &=  0.00441617417531778152575239920194 \\
    a_{36} &= -0.00059441802408316391637133369859
\end{align*}
\end{minipage}
\hskip 1em
\begin{minipage}[t]{0.45\textwidth}
\begin{align*}
           &\hskip6em  \textbf{Row 4}\\
    a_{41} &=  0.08625666550248192070640445809090 \\
    a_{42} &=  0.17596461234875152182442169320172 \\
    a_{43} &=  0.25265567757907443762884976996730 \\
    a_{44} &=  0.11697848364317276206352505790343 \\
    a_{45} &= -0.01121852115999424888493132714526 \\
    a_{46} &=  0.00180479109015978161730628292887
\end{align*}
\begin{align*}
           &\hskip6em  \textbf{Row 5}\\
    a_{51} &=  0.08537126964880648192196395492033 \\
    a_{52} &=  0.18226690839722077906398316721729 \\
    a_{53} &=  0.22856527814707713060232532463428 \\
    a_{54} &=  0.24631347470770748171638210344583 \\
    a_{55} &=  0.09019039326203465167508704620183 \\
    a_{56} &= -0.00223594892742918868840224021234
\end{align*}
\begin{align*}
           &\hskip6em  \textbf{Row 6}\\
    a_{61} &=  0.08541666550248192070640445809090 \\
    a_{62} &=  0.17910961861468254095854394571928 \\
    a_{63} &=  0.22954079937695876173542026249123 \\
    a_{64} &=  0.23706557280011504936496464871261 \\
    a_{65} &=  0.19293841489742591346213064794991 \\
    a_{66} &=  0.04283112309479258626138789589475
\end{align*}
\end{minipage}
\end{center}

\bibliographystyle{elsarticle-num}
\bibliography{bibliography}
\end{document}